\theoremstyle{plain}
\newtheorem{theorem}{Theorem}[section]
\newtheorem{lemma}[theorem]{Lemma}
\newtheorem{proposition}[theorem]{Proposition}
\newtheorem{definition}[theorem]{Definition}
\newtheorem{corollary}[theorem]{Corollary}
\newtheorem{remark}[theorem]{Remark}
\newtheorem{example}[theorem]{Example}
\newcommand{\A}{\mathcal{A}}
\newcommand{\be}{\begin{equation}\label}
\newcommand{\ee}{\end{equation}}
\newcommand{\bq}{\begin{equation*}}
\newcommand{\eq}{\end{equation*}}
\newcommand{\ba}{\begin{align*}}
\newcommand{\ea}{\end{align*}}
\newcommand{\bp}{\begin{proof}}
\newcommand{\ep}{\end{proof}}
\newcommand{\bL}{\begin{lemma}\label}
\newcommand{\eL}{\end{lemma}}
\newcommand{\bP}{\begin{proposition}\label}
\newcommand{\eP}{\end{proposition}}
\newcommand{\bC}{\begin{corollary}\label}
\newcommand{\eC}{\end{corollary}}
\newcommand{\bT}{\begin{theorem}\label}
\newcommand{\eT}{\end{theorem}}
\newcommand{\bR}{\begin{remark}\label}
\newcommand{\eR}{\end{remark}}
\newcommand{\bD}{\begin{definition}\label}
\newcommand{\eD}{\end{definition}}
\def\sideremark#1{\ifvmode\leavevmode\fi\vadjust{\vbox to0pt{\vss
\hbox to 0pt{\hskip\hsize\hskip1em
\vbox{\hsize2cm\tiny\raggedright\pretolerance10000
\noindent#1\hfill}\hss}\vbox to8pt{\vfil}\vss}}}
\numberwithin{equation}{section}
\thanks{This paper constitutes part of the doctoral dissertation of Catalin Dragan. This work was partially supported by the Simons Foundation grant No 245660 to Victor Kaftal}
\author{Catalin Dragan}
\address{Department of Mathematics\\
University of Cincinnati\\
P. O. Box 210025\\
Cincinnati, OH\\
45221-0025\\
USA}
\email{dragancn@mail.uc.edu}
\author{Victor Kaftal}
\address{Department of Mathematics\\
University of Cincinnati\\
P. O. Box 210025\\
Cincinnati, OH\\
45221-0025\\
USA}
\email{victor.kaftal@uc.edu}
 \keywords{Sums of positive operators, sums of projections}
\subjclass{Primary: 47C15,  Secondary: 46L10}
\begin{document}
\title[Sums of equivalent sequences]{Sums of equivalent sequences of positive operators in von Neumann factors}

\maketitle

\begin{abstract}
Let $A$ be a positive operator in an infinite $\sigma$-finite von Neumann factor $\mathcal M$ and let $\{B_j\}_{j=1}^\infty \subset\mathcal M^+$. We give sufficient conditions for the decomposition $A=\sum_{j=1}^\infty C_j$ to hold when $C_j \sim B_j$ for all $j$ (the equivalence $C\sim B$ means $C=XX^*$ and $B=X^*X$ for some $X\in \mathcal M$) and when   $C_j$ are unitarily equivalent to $B_j$ for all $j$. This extends the work of Bourin and Lee in \cite{SMV}, \cite{UE} for the case of $B_j= B$ and $\mathcal {M}=B(\mathcal{H})$ and answers affirmatively their conjecture. For the case when $B_j= B$ for all $j$ we provide  necessary conditions, which in the type III case are also sufficient.
\end{abstract}

\section{Introduction}
In 1969 Fillmore characterized the (positive) finite rank operators that are sums of projections (\cite{Fp69}). In 1994 Wu and Choi announced that positive operators with essential norm strictly larger than 1 are  sums of projections (\cite{Wpy94} and \cite{ChoiWu2014}). See also \cite{KRS02} and  \cite{KRS03} for the special case of scalar multiples of the identity and \cite {DFKLOW}  and then \cite {AMRS} for a different approach motivated by frame theory.

The complete characterization of the positive operators that are infinite sums of projections converging in the strong topology, was obtained by Kaftal, Ng, and Zhang in \cite[Theorem 1.1]{SSP}. Their method did apply also to $\sigma$-finite von Neumann factors and using other methods their results were partially extended to some C*-algebras and their multiplier algebras in a series of articles  (\cite {KNZPISpan}-\cite {KNZCompJOT}. In particular, they obtained that  for $A\in \mathcal{M}^+$ to be a sum of projections (converging in the SOT) it is sufficient that 
\begin{itemize}
\item $\tau\big((A-I)_+\big)=\infty$, ($\tau$ a faithful normal semifinite trace)  when $\mathcal{M}$ is a type I$_\infty$ or a type II$_\infty$ factor;
\item $\|A\|> 1$, when $\mathcal{M}$ is a type III factor.
\end{itemize}
Notice that $$\|A\|_e>1 ~ \Rightarrow ~ \tau\big((A-I)_+\big)=\infty~\Rightarrow ~\|A\|_e\ge1,$$ where $\|\cdot\|_e$ is the essential norm relative to the ideal $\mathcal J$ of compact operators of $\mathcal{M}$ (the norm closed ideal generated by the finite projections of $\mathcal{M}$, also called  the Breuer ideal  \cite{Bm68}, \cite{Bm69}, see also \cite {Sm71}, \cite{Kv77} among others.)  Denote also by $\sigma_e(\cdot)$ the essential spectrum relative to this ideal, i.e., the spectrum of the canonical image in the (generalized) Calkin algebra $\mathcal M/\mathcal J$.

In $B(\mathcal{H})$, sums of projections can be further decomposed into sums of rank-one projections, which are all Murray-von Neumann equivalent. Thus an extension of \cite {SSP} is the study by
Bourin and Lee in \cite  {SMV} and  \cite  {UE} of decompositions of   positive operators $A\in B(\mathcal{H})$ into sums of  positive operators {\it equivalent} to a given positive operator $B\ne 0$.  

The first notion of equivalence they considered is the {\it Murray-von Neumann equivalence} (also called the {\it Pedersen equivalence} see \cite[Definition 6.1.2] {Bla88} and for more background  \cite {OrtegaRordamThiel}): $B\sim C$ if $B=XX^*$ and $C=X^*X$ for some $X$ or equivalently $B=VCV^*$ for some partial isometry $V$ such that $V^*V=R_C$, $VV^*= R_B$ where $R_B$, $R_C$ denote the range projections of $B$ and $C$ respectively (in a von Neumann algebra $\mathcal{M}$ we require that $X$, $V\in \mathcal{M}$). In this paper we will refer to $\sim$ simply as equivalence and we will denote by $\cong$ the relation of unitary equivalence.

Bourin and Lee  proved the following results: 
\begin{theorem}\label{SVM}
\cite [Theorem 1.2] {SMV} 
If $A, B\in B(\mathcal{H})^+$, $\tau((A-I)_+ )=\infty$, and $0\ne B$ is a  contraction, then $A=\sum_{j=1}^\infty B_j$ for some $B_j\sim B$. 
\end{theorem}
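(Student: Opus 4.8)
\emph{Proof plan.}
The plan is to exploit the hypothesis $\tau\big((A-I)_+\big)=\infty$ twice. First, the spectral projection $E:=\chi_{(1,\infty)}(A)$ must be infinite, since otherwise $(A-I)_+$ would be a bounded operator of finite rank, hence of finite trace; moreover $E$ commutes with $A$, $AE\ge E$, and $A(I-E)\le I-E$, so $A=A_1\oplus A_0$ with $A_1\ge I$ on the infinite-dimensional space $E\mathcal{H}$, $0\le A_0\le I$, and $\tau(A_1-I)=\tau\big((A-I)_+\big)=\infty$. Accordingly I would treat separately the regime $\|A\|_e>1$, where $A\ge(1+\varepsilon)I$ on a fixed infinite-dimensional subspace for some $\varepsilon>0$, and the boundary regime $\|A\|_e\le1$, where the excess of $A$ over $I$ is carried by infinitely many eigenvalues accumulating at $1$ from above and only an infinitesimal surplus is available at any single location.

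The heart of the argument is an absorption lemma: if $0\ne B$ is a contraction and $D\ge0$ has an infinite-dimensional subspace on which $D\ge I$ together with $\tau\big((D-I)_+\big)=\infty$, then $D=C+D'$ with $C\sim B$, $D'\ge0$, and $D'$ again of the same kind. Here I would use that a positive contraction $B$ is the $(1,1)$-corner of the projection $\left(\begin{smallmatrix} B & (B-B^{2})^{1/2}\\ (B-B^{2})^{1/2} & I-B\end{smallmatrix}\right)$ on $\mathcal{H}\oplus\mathcal{H}$, so that locating a copy $C\sim B$ inside $D$ reduces to positioning a suitably compressed copy of a \emph{projection}, which lets one bring the cut-down and moving techniques for projections from \cite{SSP} to bear. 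The copy is produced as $C=UBU^{*}$ with $U$ a partial isometry, $U^{*}U=R_B$, whose range is placed inside a spectral subspace $\chi_{(1+\delta,\infty)}(D)$ (or, in the boundary regime, inside a union of thin spectral slabs of $D$ just above $1$), and a $2\times2$ matrix estimate on $D-C$ gives $D-C\ge0$. The step I expect to be the main obstacle is showing that $D-C$ still has infinite excess: this is automatic when $B$ is trace class (only finite trace is removed), but when $\tau(B)=\infty$ (for instance $B=\tfrac12 I$) the copy of $B$ has to be \emph{thinned into} the excess of $D$ rather than stacked on top of it — one must spread $C$ over an infinite-dimensional region where $D$ exceeds $I$ by a bounded-below amount and remove from $D$ only a surplus comparable to that margin.

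Granting the absorption lemma, I would build the series by iteration with a diagonal steering. Fix an orthonormal basis $(e_n)$ of $\mathcal{H}$ and set $D^{(0)}=A$. At stage $n$, apply the lemma finitely many times to $D^{(n-1)}$, each time enlarging $\operatorname{ran}U$ so that it also meets $\operatorname{span}(e_1,\dots,e_n)$ (resolved against the excess of the current remainder), so as to obtain $D^{(n)}\ge0$ with infinite excess and $\langle D^{(n)}e_k,e_k\rangle<2^{-n}$ for all $k\le n$. Listing all the copies of $B$ split off as $B_1,B_2,\dots$, the partial sums equal $A-D^{(n)}$ for the appropriate $n$, they increase and are bounded above by $A$, hence converge in the strong operator topology to some $A-A_\infty$ with $0\le A_\infty\le D^{(n)}$ for every $n$; then $\langle A_\infty e_k,e_k\rangle\le\langle D^{(n)}e_k,e_k\rangle<2^{-n}$ for $n\ge k$, so the diagonal of the positive operator $A_\infty$ vanishes, whence $A_\infty=0$ and $A=\sum_{j=1}^\infty B_j$ strongly with each $B_j\sim B$. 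In brief: (1) peel off the part of $A$ above $I$ via a spectral projection and record its infinite trace and infinite-dimensional support; (2) prove the absorption lemma through the corner-of-a-projection presentation of $B$, the genuinely delicate case being non-trace-class $B$; (3) iterate, steering the diagonal entries $\langle D^{(n)}e_k,e_k\rangle$ to zero, and conclude strong-operator convergence.
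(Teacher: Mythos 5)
Your plan is fine in the easy regime, but the decomposition it rests on is not proved, and the sketch of how to prove it breaks down exactly where the theorem is hard. The whole content of the statement is your ``absorption lemma'' when $\tau\big((A-I)_+\big)=\infty$ but $\|A\|_e=1$ (e.g.\ $A=I+K$ with $K\ge0$ compact, not trace class) and $B$ has infinite-rank support and infinite trace (e.g.\ $B=\tfrac12 I$ or an infinite-rank projection). In that regime $\chi_{(1+\delta,\infty)}(D)$ is \emph{finite}-dimensional for every $\delta>0$, so there is no ``infinite-dimensional region where $D$ exceeds $I$ by a bounded-below amount'' over which to spread $C\sim B$: you cannot place $R_C\sim R_B$ under any such spectral projection, and the $2\times2$/cut-down estimate giving $D-C\ge0$ has nothing to act on. Your proposed remedy for non-trace-class $B$ therefore relies on a subspace that need not exist, and the dilation of $B$ to the projection with $(1,1)$-corner $B$ does not repair this: knowing $B$ is a corner of a projection in $B(\mathcal H\oplus\mathcal H)$ does not convert the problem of finding $C\sim B$ with $C\le D$ into a placement problem for projections inside $D$ — that reduction goes in the wrong direction, and making it precise is essentially the theorem itself. (The trace-class case you call automatic is indeed automatic, and the case $\|A\|_e>1$ works as you describe; that part is the analogue of Lemma \ref{subsume a sequence}.) The diagonal-steering iteration at the end is also only as strong as the absorption step: forcing $\langle D^{(n)}e_k,e_k\rangle<2^{-n}$ requires removing copies of $B$ that nearly exhaust $A$ in prescribed directions, which again is not justified.

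For comparison, the paper obtains this statement as the case $B_j\equiv B$ of Theorem \ref{equiv infinite trace}(i), and the thin-excess difficulty is handled by two ingredients you do not have. First, the sums-of-projections theorem of \cite{SSP} (packaged here as Proposition \ref{proj}) is used at full strength: under exactly $\tau\big((A-I)_+\big)=\infty$, $A$ is a strong sum of projections each equivalent to any prescribed nonzero $P$; this is what absorbs ``flat'' pieces even when the excess of $A$ over $I$ is spread over eigenvalues accumulating at $1$. Second, Lemma \ref{monotone sequence} replaces $B$ by $D=sP\oplus B(I-P)$, where $P=\chi_{(t,s]}(B)$ is a spectral projection in a thin band chosen by a monotone-subsequence argument, so that the flat summand $sP$ can be handled by Proposition \ref{proj} (via Lemma \ref{common proj direct summand}) and the error is \emph{multiplicative}, $0\le D-B\le\frac{s-t}{s}\,D$, giving a remainder $R\le\epsilon A$; iterating over orthogonal pieces of $A$ with $\epsilon=1/k$ kills the remainder. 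Unless you either import the [SSP] machinery in this quantitative way or supply a genuinely new argument for placing an infinite-rank, non-trace-class copy of $B$ below an operator whose spectrum exceeds $1$ only infinitesimally, the proposal does not prove the theorem.
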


\begin{theorem}\label{UEBL}
\cite [Theorem 1.1] {UE} 
If $A, B\in B(\mathcal{H})^+$, $N_A=0$ ($N_A$ is the projection on the kernel of $A$),  $\|A\|_e\ge 1$, $B$ is a strict contraction and $0\in \sigma_e(B)$, then $A=\sum_{j=1}^\infty B_j$ for some  $B_j\cong B$.
\end{theorem}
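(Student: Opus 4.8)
The plan is to reduce Theorem~\ref{UEBL} to Theorem~\ref{SVM} by a rescaling and then to upgrade Murray--von Neumann equivalence to unitary equivalence, the latter step being precisely where the hypotheses $0\in\sigma_e(B)$ and $N_A=0$ enter. We may assume $B\ne 0$ and put $\beta:=\|B\|\in(0,1)$. Since $C\sim B$ if and only if $\beta^{-1}C\sim\beta^{-1}B$ (if $C=XX^*$ and $B=X^*X$ then $\beta^{-1}C=YY^*$ and $\beta^{-1}B=Y^*Y$ with $Y=\beta^{-1/2}X$, and multiplication by a positive scalar changes neither kernels nor ranges), it suffices to decompose $\beta^{-1}A$ into summands equivalent to $\beta^{-1}B$. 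Now $\beta^{-1}B$ is a nonzero contraction while $\|\beta^{-1}A\|_e=\beta^{-1}\|A\|_e\ge\beta^{-1}>1$, so $\tau\big((\beta^{-1}A-I)_+\big)=\infty$ by the implication recalled in the Introduction; Theorem~\ref{SVM} then gives $\beta^{-1}A=\sum_{j}D_j$, the sum converging strongly since its partial sums increase and are dominated by $\beta^{-1}A$, with $D_j\sim\beta^{-1}B$. Rescaling yields $A=\sum_j C_j$ with $C_j:=\beta D_j\sim B$.

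To promote this to unitary equivalence, write $C_j=V_jBV_j^*$ for a partial isometry $V_j$ with $V_j^*V_j=R_B$, $V_jV_j^*=R_{C_j}$. If one can arrange $\dim N_{C_j}=\dim N_B$ for every $j$, then, choosing a unitary $W_j$ from $N_B\mathcal H$ onto $N_{C_j}\mathcal H$ and setting $U_j:=V_j+W_j$, the identities $BW_j^*=0=W_jB$ and $V_j^*W_j=0=W_j^*V_j$ show that $U_j$ is unitary with $U_jBU_j^*=V_jBV_j^*=C_j$, so $C_j\cong B$ and $A=\sum_j C_j$ is the required decomposition. Thus the whole problem reduces to producing a decomposition $A=\sum_j C_j$, $C_j\sim B$, with the \emph{normalization} $\dim N_{C_j}=\dim N_B$ for all $j$. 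If $B$ has finite rank this is automatic, since then $\dim N_{C_j}=\dim\mathcal H=\dim N_B$; so the content of the hypotheses lies in the case $R_B$ of infinite rank. There $0\in\sigma_e(B)$ is a genuine restriction — it excludes, for instance, $B=\tfrac12I$, whose only unitary copy is itself, so that an infinite sum of copies could never converge — and $N_A=0$ is needed since the ranges of the $C_j$ must be jointly dense in $\mathcal H$, leaving no room for a summand to carry extra kernel. I would get the normalized decomposition by re-running the inductive ``peel off a piece equivalent to $B$'' argument behind Theorem~\ref{SVM}, but splitting off at each stage a genuine unitary copy $C_j=U_jBU_j^*$ with $0\le C_j\le A$ and $A-C_j\ge 0$: the inequality $\|A\|_e\ge 1>\beta$ furnishes an infinite-dimensional spectral subspace of $A$ on which $A\ge\beta I$, into half of which one puts the ``bulk'' of $C_j$; $0\in\sigma_e(B)$ furnishes an infinite-dimensional subspace on which $C_j$ is as small as one wishes, which can be fitted beneath $A$ precisely because $N_A=0$; and keeping $C_j$ negligible on a spectral subspace of $A$ realizing $\sup\sigma_e(A)=\|A\|_e$ keeps $\|A-C_j\|_e>\beta$ and $N_{A-C_j}=0$, so the hypotheses reproduce themselves along the induction. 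Strong convergence of $\sum_j C_j$ to $A$ itself, rather than to a proper sub-operator, is then forced by the standard diagonal device: fix an orthonormal basis $(\xi_k)$ in advance and require in addition at stage $j$ that $\langle(A-\sum_{i\le j}C_i)\xi_k,\xi_k\rangle\le 2^{-j}$ for all $k\le j$, so the decreasing remainders converge strongly to $0$.

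The main obstacle is exactly this single-step construction: selecting each $C_j\cong B$ so that, simultaneously, $C_j\le A$ with the prescribed kernel dimension, $C_j$ is small on a spectral subspace of $A$ realizing $\|A\|_e$, and $C_j$ is small on a prescribed finite-dimensional subspace. This triple bookkeeping — fitting the rigid unitary-orbit data of $B$ against the local spectral structure of $A$ while keeping the essential norm and the injectivity of the remainder under control — is where the real work lies, and it is the crux of extending the Bourin--Lee construction of \cite{SMV} and \cite{UE}.
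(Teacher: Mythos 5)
There is a genuine gap, and it starts with the very first reduction. You set $\beta:=\|B\|\in(0,1)$, but ``strict contraction'' in Bourin--Lee's sense (as the paper recalls) means only $\chi_{\{1\}}(B)=0$; it does \emph{not} force $\|B\|<1$ (take $B$ diagonal with eigenvalues $1-1/n$). The case $\|B\|=1=\|A\|_e$ (e.g.\ $A=I$) is precisely the delicate boundary case of Theorem \ref{UEBL}, and there your rescaling buys nothing: $\tau\big((\beta^{-1}A-I)_+\big)=\tau\big((A-I)_+\big)$ can be finite or zero, so Theorem \ref{SVM} does not apply. The paper handles this boundary case not by rescaling but by cutting $B$ along spectral projections $\chi_{(t_k,t_{k+1}]}(B)$ with $t_k\uparrow\|B\|$ (possible exactly because $\chi_{\{\|B\|\}}(B)=0$), matching these pieces against a decomposition $A=\bigoplus_k AE_k$ with $\|AE_k\|_e=\|A\|_e$; this is condition ii) of Theorem \ref{equiv infinite trace} together with Lemma \ref{dec of sequence}, and no trace of that mechanism appears in your argument. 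Your later sketch (``keeping $\|A-C_j\|_e>\beta$'') silently relies on the same false inequality $\|A\|_e\ge 1>\|B\|$.

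The second, larger problem is that the part of the proof where unitary equivalence is actually achieved is not carried out: you yourself say you ``would get the normalized decomposition by re-running the inductive argument'' and that the single-step construction ``is where the real work lies.'' That single step --- peeling off a genuine unitary copy $C_j\cong B$ with $0\le C_j\le A$, with prescribed kernel dimension, small on a spectral subspace realizing $\|A\|_e$, small on finitely many prescribed vectors, and with the remainder still injective of essential norm at least $\|B\|$ --- is the theorem, not an implementation detail, and nothing in the proposal establishes it. For comparison, the paper's route avoids this peeling altogether: Theorem \ref{equiv infinite trace} already produces $A=\sum_j C_j$ with $C_j\sim B$ \emph{and} $N_{C_j}$ infinite (the kernel bookkeeping is built into Lemmas \ref{subsume a sequence}--\ref{monotone sequence}); when $N_B$ is infinite this immediately upgrades to $\cong$, and when $N_B$ is finite one reduces to $R_A=I$ via $N_A\prec N_B$, splits $A=\bigoplus_k AE_k$ into locally invertible pieces of full essential norm (Lemma \ref{gen ess spect} ii$\,'$)), splits $B=\bigoplus_k BF_k$ compatibly using $0\in\sigma_e(B)$ (Lemma \ref{dec of sequence}), aligns the two block structures by a unitary, and applies the invertible-$A$ case (Proposition \ref{unit equiv, invertible}, the adaptation of Bourin--Lee's Lemmas 2.2--2.4) block by block. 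Some substitute for this machinery is needed; as written, your proposal proves the easy case $\|B\|<1$ with $N_B$ infinite and defers everything else.
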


By   {\it strict contraction}  Bourin and Lee  mean that $\|B\xi\|<\|\xi\|$ for every non-zero vector $\xi$.  If $B$ is a positive contractions this is equivalent to  the condition $\chi_{\{1\}}(B)=0$.  Indeed $\|B\xi\|=\|\xi\|$ if and only if $(I-B^2)\xi=0$, that is $ \xi\in \chi_{\{1\}}(B).$

They also conjectured in \cite {SMV} and \cite {UE}  that the same theorems hold for von Neumann factors.
The goal of the present paper is to prove their conjecture. 

Bourin and Lee based their proofs mostly on Bourin's ``pinching theorem"  \cite {PT} from which they obtained:
\begin{lemma}\label{Lemma 2.4}\cite[Lemma 2.4]{SMV}
If $A\in B(\mathcal{H})^+$, $\|A\|_e>1$, $\{B_j\}_{j=1}^\infty$ is a family of non-zero positive contractions, $\beta>0$, and $B_j\ge \beta R_{B_j}$ for all $j$ then $A= \sum_{j=1}^\infty C_j$ for some $C_j\sim B_j$.
\end{lemma}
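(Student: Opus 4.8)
The plan is to reduce the statement to a tractable ``block-diagonal'' situation and then handle each block by an $n$-term finite decomposition result that already follows from Bourin's pinching theorem. First I would note that the hypothesis $\|A\|_e > 1$ means that, after subtracting off a finite-rank piece, $A$ dominates $(1+\delta)$ times an infinite-rank projection for some $\delta>0$; concretely one can write $A = A_0 \oplus A_1$ (with respect to a decomposition $\mathcal H = \mathcal H_0 \oplus \mathcal H_1$, $\dim \mathcal H_0 < \infty$, $\dim \mathcal H_1 = \infty$) where $A_1 \ge (1+\delta) I_{\mathcal H_1}$ and $A_0$ is a positive operator on a finite-dimensional space with $\|A_0\| \le \|A\|$. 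The point of this reduction is that the ``bad'' finite-rank part can be absorbed later, and the essential content lives on $\mathcal H_1$.

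Next I would partition the index set $\{1,2,\dots\}$ into consecutive finite blocks $F_1, F_2, \dots$ chosen so that on each block the operators $\{B_j\}_{j\in F_k}$ are ``small'' relative to the slack $\delta$: since the $B_j$ are contractions bounded below on their ranges by $\beta$, for each block I want $\sum_{j\in F_k}\mathrm{rank\text{-}type\ data}$ to fit inside a copy of $(1+\delta)I$ on a finite-dimensional (or, in the II$_\infty$/III setting, properly infinite) subprojection of $\mathcal H_1$. Correspondingly I would split $A_1$ (or rather $(1+\delta)I_{\mathcal H_1}$, keeping the excess in reserve) into an orthogonal sum $\sum_k A^{(k)}$ of pieces, each unitarily equivalent to a fixed ``model'' operator that is large enough to majorize the block. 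Then the finite-dimensional version of the pinching/majorization argument — the statement that $D = \sum_{j\in F}C_j$ with $C_j \sim B_j$ whenever $D$ suitably majorizes the collection $\{B_j\}_{j\in F}$, valid because $D - \beta(\text{sum of range projections})$ is positive and one applies Bourin's pinching theorem finitely many times — gives $A^{(k)} = \sum_{j\in F_k}C_j^{(k)}$ with $C_j^{(k)}\sim B_j$. Summing over $k$ and over $j\in F_k$ in the strong operator topology yields $\sum_{j=1}^\infty C_j$ with $C_j \sim B_j$; the leftover finite-rank part $A_0$ together with the reserved excess $\delta I$ is folded into the first block, which is where one uses that $\|A\|_e > 1$ strictly (so there is genuine room, not just $\|A\|_e \ge 1$).

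The main obstacle I expect is the bookkeeping that makes the block decomposition of $A$ actually consume all of $A$ while only ever requiring finitely many applications of the pinching theorem per block and guaranteeing strong convergence of the resulting series — in particular, ensuring that the ``reserved excess'' and the finite-rank correction $A_0$ can be distributed so that no block is asked to majorize more than it can, and that the partial sums $\sum_{k\le K}\sum_{j\in F_k}C_j^{(k)}$ increase to $A$ rather than to something smaller. A secondary technical point is verifying that the finite-block majorization lemma is exactly what Bourin's pinching theorem delivers when several different contractions $B_j$ (rather than copies of one $B$) appear in the same block; this is where the uniform lower bound $B_j \ge \beta R_{B_j}$ is essential, since it lets one pull out a common scalar floor $\beta$ before pinching. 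Once these combinatorial details are arranged, the argument is a fairly direct iteration, and the hypothesis set ($\|A\|_e>1$, contractions, uniform $\beta$-floor) is precisely tuned to make each step go through.
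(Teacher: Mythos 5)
Your opening reduction is false, and it is exactly where the real difficulty of the lemma lives. The hypothesis $\|A\|_e>1$ only says that the spectral projection $\chi_{(1+\delta,\infty)}(A)$ has infinite rank for some $\delta>0$; it does not make the complementary spectral part finite rank. For $A=\operatorname{diag}(2,0,2,0,\dots)$ one has $\|A\|_e=2>1$, yet there is no decomposition $A=A_0\oplus A_1$ with $\dim\mathcal H_0<\infty$ and $A_1\ge(1+\delta)I_{\mathcal H_1}$: any subspace of finite codimension meets the infinite-dimensional kernel. In general the part of $A$ with spectrum in $[0,1]$ (kernel, infinitely many small eigenvalues, continuous spectrum) can be infinite dimensional, and it too must be written into $\sum_j C_j$; the content of the lemma is precisely how the large part of $A$ absorbs this small part. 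Everything downstream of your reduction --- folding the ``bad'' piece into the first block as a finite-rank correction --- therefore collapses.

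The blockwise scheme has independent defects as well. The $B_j$ are merely non-zero positive contractions with $B_j\ge\beta R_{B_j}$, so they may have infinite rank (e.g.\ infinite-rank projections), in which case no $C_j\sim B_j$ fits inside a finite-dimensional block at all; and even if every $B_j$ has finite rank, an exact identity $A^{(k)}=\sum_{j\in F_k}C_j$ on a finite-dimensional subspace forces the exact trace equality $\tau(A^{(k)})=\sum_{j\in F_k}\tau(B_j)$, which no bookkeeping of prescribed data can arrange in general. Moreover Bourin's pinching theorem is an essential-norm, intrinsically infinite-dimensional statement; it is not something one applies ``finitely many times'' on finite blocks. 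For comparison: Bourin and Lee prove this lemma by one application of the pinching theorem to all of $A$, converted into a decomposition through the correspondence recorded here as Proposition \ref{block diagonal decomp} (from blocks $E_jVAV^*E_j\sim B_j$ one takes $C_j=A^{1/2}V^*E_jVA^{1/2}$), whereas the present paper goes in the opposite direction, deducing a strengthening (Theorem \ref{equiv infinite trace}, Corollary \ref{pinching corollary}) from sums-of-projections results (Proposition \ref{proj}, via \cite{SSP}) together with an absorption step and an $\epsilon$-remainder iteration (Lemmas \ref{subsume a sequence}--\ref{monotone sequence}). In both of those routes the treatment of the spectral part of $A$ below the threshold is the heart of the argument, and your proposal has no mechanism for it.
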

One part of their proof depended on the previous result \cite[Theorem 1.1]{SSP} on decompositions into sums of projections,  but they announced an independent proof of that result. 

Our approach is the opposite.  We use decompositions into sums of projections from \cite{SSP}, which, as mentioned above, hold for $\sigma$-finite factors. From these we  obtain decompositions into sums of positive operators (Theorem \ref {equiv infinite trace}). As a consequence we answer affirmatively the conjecture of Bourin and Lee and then we proceed to deduce a form of the ``pinching theorem"  (Corollary  \ref{pinching corollary}). 

Our paper is organized as follows.

In section 2 we obtain some decomposition results for positive operators in a $\sigma$-finite factor $\mathcal M$ and we strengthen some results on decompositions into sums of projections for the type II$_\infty$ case.

In section 3 we use a series of reductions to sums of projections to obtain our main result, Theorem \ref {equiv infinite trace},  which proves  that an $A\in \mathcal{M}^+$ with $\tau((A-I)_+)=\infty$ can be decomposed as a sum $A=\sum_{j=1}^\infty C_j$ with $C_j\sim B_j$ for a preset sequence of contractions $\{B_j\}_{j=1}^\infty$ provided there is a $\beta >0$ and a non-zero projection $P$ such that $P\prec \chi_{(\beta,\infty)}(B_{j})$ for infinitely many indices $j$.

Of course that condition is always satisfied in the case of a single operator $B\ne 0$,  i.e., when $B_j=B$ for all $j$, which answers affirmatively the conjecture of Bourin and Lee. 

The connection between decompositions into sums of projections of a positive $A\in \mathcal M$ and the (block) diagonals of $A$  used in \cite{SSP}, \cite{FSP} is easily extended in Proposition \ref {block diagonal decomp} to sums of positive elements. Thus Corollary  \ref{pinching corollary}  provides a form of the  ``pinching theorem"  of  \cite {PT} for von Neumann factors. 

In section 4  we find sufficient conditions for the decomposition to hold with $C_j$ unitarily equivalent to $B_j$ (Theorem \ref {unit equiv sequence}) under the additional hypothesis that  $0\in \sigma_e(B_j)$ (resp., $0\in \sigma(B_j)$ if $\mathcal M$ is type III). Part of the proof of Theorem \ref {unit equiv sequence} is an adaptation to von Neumann factors of the methods used by Bourin and Lee in \cite{UE} for a single operator in $B(\mathcal{H})$. 

In section 5 we specialize the previous results to the case where $B_j=B$ for all $j$ and we find necessary conditions for that case (Proposition \ref{nec equiv}).  Theorem \ref {equiv} provides additional and independent sufficient conditions. These are also  sufficient when $\mathcal M$ is type III,  or when $\|A\|=\|A\|_e$ (Corollaries \ref {norm=essnorm} and \ref{N&S type III}). Similar results are obtained for unitary equivalence (Corollaries \ref {UE} and \ref{P:NecSuf}).

\section{Preliminary decompositions}  

Throughout this paper, unless otherwise stated, $\mathcal{M}$ denotes a $\sigma$-finite, infinite factor acting on a Hilbert space $\mathcal{H}$ and all the operators considered will belong to $\mathcal{M}$. When $\mathcal{M}$ is type I$_{\infty}$, $\tau$ denotes the standard trace, normalized on rank one projections, when $\mathcal{M}$ is type II$_{\infty}$, $\tau$ denotes a faithful, normal, semifinite, tracial weight. 

If $A\in \mathcal{M}$ we denote by $R_A$ and $N_A$  the range projection and the projection on the null space of $A$ respectively,  and when $A=A^*$, we denote by $A_+$ the positive part of $A$.

$\mathcal{J}$ denotes the norm-closed two-sided ideal generated by the finite projections of $\mathcal{M}$.  In particular, when $\mathcal{M}=B(\mathcal{H})$ then $\mathcal{J}= K(\mathcal{H})$, the ideal of compact operators,  and when $\mathcal{M}$ is type III, $\mathcal{J}=\{0\}$.  
When $\mathcal{M}$ is semifinite, denote by $\pi$ the canonical map from $\mathcal{M}$ onto $\mathcal{M}/\mathcal{J}$, which is a unital, surjective, *-homomorphism. Then the essential norm and essential spectrum of $A$ are defined as $\|A\|_e=\|\pi(A)\|$ and respectively $\sigma_e(A)=\sigma(\pi(A))$.

If $P$ is a non-zero projection in $\mathcal{M}$ we denote by $\mathcal{M}_P$ the compression of $\mathcal{M}$ by $P$ and by $\tau_P$ the restriction of $\tau$ on $\mathcal{M}_P$, i.e. $\mathcal{M}_P=P\mathcal{M}P|_{P\mathcal{H}}$ and $\tau_P(X_P)=\tau(PXP)$ for all $X\in\mathcal{M}^+$.

Notice that when $P$ is infinite and $AP=PA$ then $\|AP\|_e=\|A_P\|_e$ while $$\sigma_e(AP)=
\begin{cases}
\sigma_e(A_P), & \text{ if $I-P$ is finite},\\
\sigma_e(A_P)\cup\{0\}, & \text{ if $I-P$ is infinite},
\end{cases}
$$ where $\|A_P\|_e$ and $\sigma_e(A_P)$ are considered relative to $\mathcal{M}_P$ and $\tau_P$. 

We also make the convention that whenever we write $A=\bigoplus_{i=1}^{N} A_i$, where $N$ is some positive integer or $\infty$, then $A_i=AE_i$ for some mutually orthogonal projections $E_i\in \mathcal{M}$  that commute with $A$. Similarly, if we write $A=\bigoplus_{i=1}^{N} AE_i$ we will mean that the projections $E_i$ are mutually orthogonal and commute with $A$.

We collect below two known properties of the essential norm and essential spectrum. 
For $A\in \mathcal{M}^+$ and $\mathcal{M}$ semifinite we have that:

1) $\sigma_e(A)=\{\,\lambda\in\mathbb{R}\mid \tau\left(\chi_{(\lambda-\epsilon,\lambda+\epsilon)}(A)\right)=\infty,\forall\, \epsilon>0\,\},$ 

2) $\|A\|_e>\lambda$ if and only if  there exists $\epsilon>0$ such that $\tau\left(\chi_{(\lambda+\epsilon,\infty)}(A)\right)=\infty.$

We will also need the following decompositions of positive operators.\\

\begin{lemma} \label{gen ess spect}
Let $\mathcal{M}$ be semifinite, $A \in \mathcal{M}^+$, and $N\geq 1$ an integer or $N=\infty$. 

\item [ i)] 
If $\lambda \in \sigma_e(A)$ then $A=\bigoplus_{i=1}^{N}AE_i$ for some mutually orthogonal infinite projections $E_i\in \{A\}'$ with $\sum_{i=1}^N E_i=I$ and with $\lambda \in \sigma_e(A_{E_i})$ for every $i$.
\item [ ii)] $A=\bigoplus_{i=1}^{N}A_i$ for some $A_i\in \mathcal M^+$ with $\|A_i\|_e=\|A\|_e$ for every $i$. 
\item [ ii$\,'$)] If $N=\infty$ the positive operators $A_i$ in ii) can be chosen to be locally invertible, that is,  $A_i\ge \alpha_iR_{A_i}$ for some $\alpha_i>0$ (we use the convention that the operator 0 is locally invertible).
\item [ iii)] If  $\tau((A-tI)_+)=\infty$ for some $t>0$ then $A=\bigoplus_{i=1}^{N}A_i$ for some $A_i\in \mathcal M^+$ with $\tau\left((A_i-tI)_+\right)=\infty$ for all $i$.
\item [ iii$\,'$)] If $N=\infty$ the positive operators $A_i$ in iii) can be chosen to be locally invertible.\end{lemma}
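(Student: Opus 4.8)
The plan is to prove part (i) directly from the spectral theorem and then to derive the rest from it. The recurring device is an elementary \emph{greedy blocking}: if $(a_k)_{k\ge 0}\subset[0,\infty]$ satisfies $\sum_{k\ge m}a_k=\infty$ for every $m$, then $\{0,1,2,\dots\}$ splits into consecutive finite blocks $B_1,B_2,\dots$ with $\sum_{k\in B_j}a_k\ge 1$ for all $j$, and, there being infinitely many such blocks, for any $N\le\infty$ they can be assigned to $N$ groups each containing infinitely many blocks. I will also use that in a $\sigma$-finite semifinite factor every infinite-trace projection is a sum of $N$ mutually orthogonal infinite-trace subprojections (write it as a countable orthogonal sum of finite-trace projections and group them the same way).

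For (i): since $\lambda\in\sigma_e(A)$, property (1) gives $\tau\big(\chi_{(\lambda-1/m,\lambda+1/m)}(A)\big)=\infty$ for every $m$. Put $P_\infty=\chi_{\{\lambda\}}(A)$ and, for $k\ge1$, $R_k=\chi_{D_k}(A)$ with $D_k=\{x:\frac1{k+1}\le|x-\lambda|<\frac1k\}$, so that $\chi_{(\lambda-1/m,\lambda+1/m)}(A)=P_\infty+\sum_{k\ge m}R_k$ for every $m$. If $\tau(P_\infty)=\infty$, split $P_\infty=\bigoplus_{i=1}^N G_i$ into infinite-trace subprojections (each lies in $\{A\}'$ since $A=\lambda$ on $P_\infty\mathcal{H}$), take $E_1=G_1+(I-P_\infty)$ and $E_i=G_i$ for $i\ge2$; then $G_i\le\chi_{(\lambda-\epsilon,\lambda+\epsilon)}(A)$ for every $\epsilon>0$, whence $\lambda\in\sigma_e(A_{E_i})$. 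If $\tau(P_\infty)<\infty$, then $\sum_{k\ge m}\tau(R_k)=\infty$ for every $m$; apply the greedy blocking to $a_k=\tau(R_k)$, distribute the blocks into $N$ groups, let $E_i$ be the sum of the $R_k$ in the blocks of group $i$, and absorb $I-\chi_{(\lambda-1,\lambda+1)}(A)$ and $P_\infty$ into $E_1$. Then $\sum_iE_i=I$, each $E_i\in\{A\}'$, and since group $i$ contains infinitely many blocks $B_j$ with $\min B_j\ge m$ we get $\tau\big(E_i\chi_{(\lambda-1/m,\lambda+1/m)}(A)\big)=\infty$ for every $m$, i.e. $\lambda\in\sigma_e(A_{E_i})$; in particular each $E_i$ is infinite.

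Part (ii) is then immediate, taking $\lambda=\|A\|_e=\max\sigma_e(A)$ in (i): $\|A\|_e\in\sigma_e(A_{E_i})$ forces $\|A_i\|_e=\|AE_i\|_e=\|A\|_e$. For (ii$'$) with $\mu:=\|A\|_e>0$, let $Q=\chi_{(\mu/2,\infty)}(A)$ (infinite, by property (2)); since $\chi_{(\mu-\epsilon,\mu+\epsilon)}(A)\le Q$ for small $\epsilon$ one has $\mu\in\sigma_e(A_Q)$, so (i) applied to $A_Q$ in $\mathcal{M}_Q$ with $\lambda=\mu$ yields $A_Q=\bigoplus_{k\ge1}B_k$ with $\mu\in\sigma_e(B_k)$ (hence $\|B_k\|_e=\mu$, as $A_Q\le A$) and $B_k=A_QE_k\ge(\mu/2)E_k$ locally invertible; setting $A_k=B_k+AP_k$ with $P_k=\chi_{(\mu/2^{k+1},\mu/2^k]}(A)$ gives operators locally invertible with constant $\mu/2^{k+1}$, with $\|A_k\|_e=\max(\mu,\|AP_k\|_e)=\mu$ (orthogonal range projections, $\|AP_k\|\le\mu/2^k$), and $\sum_kA_k=A$. (The case $\mu=0$ is handled directly by $A_1=A\chi_{(1/2,\infty)}(A)$ and $A_k=A\chi_{(2^{-k},2^{-k+1}]}(A)$, $k\ge2$.) For (iii)--(iii$'$), put $g=(A-tI)_+$, so $\tau(g)=\infty$ and $R_g=\chi_{(t,\infty)}(A)$; the key step is to split $R_g=\sum_iF_i$ into orthogonal projections $F_i\le R_g$ commuting with $A$ and with $\tau(gF_i)=\infty$. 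If $\|A\|_e>t$ this is (i) applied to $g$ with $\lambda=\|g\|_e>0$, intersected with $R_g$; if $\|A\|_e\le t$ the annuli $g_k=\chi_{(2^{-k},2^{-k+1}]}(g)$, together with $g_0=\chi_{(1,\infty)}(g)$, all have finite trace while $\sum_{k\ge m}\tau(gg_k)=\infty$ for every $m$, so greedy blocking of $a_k=\tau(gg_k)$ produces the $F_i$. Then $A_i=AF_i$ (with $\chi_{[0,t]}(A)$ absorbed into $A_1$) has $\tau\big((A_i-tI)_+\big)=\tau(gF_i)=\infty$, giving (iii); and since $AR_g\ge tR_g$, each $AF_i$ is already locally invertible with constant $t$, so (iii$'$) follows by additionally absorbing the dyadic spectral annuli of $\chi_{(0,t]}(A)$ into the pieces exactly as in (ii$'$).

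The one genuinely delicate point is the subcase $\tau\big(\chi_{\{\lambda\}}(A)\big)<\infty$ of (i): the infinite essential mass of $A$ near $\lambda$ then sits on arbitrarily thin annuli $R_k$ whose individual traces are completely uncontrolled, and it must be redistributed so that \emph{every} one of the $N$ summands retains infinite mass in \emph{every} neighbourhood of $\lambda$; the greedy-blocking lemma is exactly what makes this work, and it reappears in (ii$'$), (iii$'$) to keep the pieces locally invertible while still exhausting $R_A$.
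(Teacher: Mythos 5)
Your proposal is correct and follows essentially the same route as the paper: part (i) is proved by splitting the spectral mass accumulating at $\lambda$ (treating an infinite-trace atom $\chi_{\{\lambda\}}(A)$ separately) into $N$ families of annuli each retaining infinite trace in every neighbourhood of $\lambda$, (ii) follows by taking $\lambda=\|A\|_e$, (iii) by distributing finite-mass spectral pieces above $t$ when $\|A\|_e=t$, and (ii$'$), (iii$'$) by decomposing the part of $A$ above a positive cut and attaching a sequence of low spectral annuli to the summands to keep them locally invertible. The only differences (the explicit greedy-blocking lemma, dyadic annuli in place of a general sequence $s_n\downarrow 0$, and routing (iii) through $g=(A-tI)_+$ rather than through (ii)) are cosmetic.
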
 
\begin{proof}
\item [ i)]  When $\tau\left(\chi_{\{\lambda\}}(A)\right)=\infty$, decompose $\chi_{\{\lambda\}}(A)=\sum_{k=1}^NQ_k$ into a sum of mutually orthogonal infinite projections $Q_k$. Take $E_k=Q_k$ for $k\ge2$ and $E_1=I-\sum_{k=2}^NQ_k$. It is immediate to verify that these projections satisfy the condition in i).

Assume therefore  that $\tau\left(\chi_{\{\lambda\}}(A)\right)<\infty$. Choose sequences  $\{c_k\}_{k=1}^{\infty}$ and $\{d_k\}_{k=1}^{\infty}$ such that $\lim_{k\to \infty}c_k=\lim_{k\to \infty}d_k=\lambda$, the intervals $[c_k,d_k)$ are disjoint and such that for every $L\ge 1$, $ \bigcup_{k=L}^{\infty}[c_k,d_k)= [\alpha_L, \beta_L)\setminus\{\lambda\}$ for some $\alpha_L< \lambda< \beta_L$. For instance one can take a strictly increasing sequence $\{a_k\}$ and a strictly decreasing sequence $\{b_k\}$, both convergent to $\lambda$, and then relabel the intervals $\{[a_k,a_{k+1})\}$ and $\{[b_{k+1},b_k)\}$ as $\{[c_k,d_k)\}$.  

 By 1) above,
$$
\tau\left(\chi_{\{\lambda\}}(A)\right)+\sum_{k=L}^{\infty}\tau\left(\chi_{[c_k,d_k)}(A)\right)=  \tau\left(\chi_{[\alpha_L,\beta_L)}(A)\right)= \infty.
$$
Hence $\sum_{k=L}^{\infty}\tau\left(\chi_{[c_k,d_k)}(A)\right)=\infty$ for every $L\geq 1$. Now split the sequence of intervals $\{[c_k,d_k)\}_{k=1}^{\infty}$ into $N$ disjoint subsequences $\{[c_k^i,d_k^i)\}_{k=1}^{\infty}$, $1\leq i\leq N$, such that $\sum_{k=L}^{\infty}\tau\left(\chi_{[c_k^i,d_k^i)}(A)\right)=\infty$ for every $L\geq 1$ and every $1\leq i\leq N$.

Let $\Delta_i=\bigcup_{k=1}^{\infty}[c_k^i,d_k^i)$ and $E_i=\chi_{\Delta_i}(A)$. Furthermore, replace the projection $E_1$ with $I-\sum_{k=2}^N E_k$. From the construction and property 1) of the essential spectrum it is easy to verify that the projections $\{E_k\}_{k=1}^N$ have the desired properties.

\item [ ii)] follows from i) by noticing that $A\geq 0$ implies $\|A\|_e \in \sigma_e(A)$.
\item [ iii)]  Because $\tau((A-tI)_+)=\infty$ we necessarily have $\tau\left(\chi_{(t,\infty)}(A)\right)=\infty$ which implies by property 2) of the essential norm that $\|A\|_e\geq t$. 

If $\|A\|_e>t$ then by ii) we have $A=\bigoplus_{i=1}^{N}A_i$ for some $\|A_i\|_e=\|A\|_e>t$ for every $i$. We are done because $\|A_i\|_e>t$ implies $\tau((A_i-tI)_+)=\infty$. 

Hence, assume that $\|A\|_e=t$. Then $\tau\left(\chi_{(s,\infty)}(A)\right)<\infty$ for every $s>t$, and in particular $\tau\left(\chi_{(u,v]}(A)\right)<\infty$ for every $(u,v]\subset(t,\infty)$. Write $(t,\infty)$ as $\bigcup_{i=1}^{\infty}(u_i,v_i]$ where the intervals are disjoint. 

Then $$\infty=\tau\left((A-tI)\chi_{(t,\infty)}(A)\right)=\sum_{i=1}^{\infty}\tau\left((A-tI)\chi_{(u_i,v_i]}(A)\right)$$ and since $\tau\left((A-tI)\chi_{(u_i,v_i]}(A)\right)<\infty$ for every $i$ we can split the sequence $\{(u_i,v_i]\}_1^{\infty}$ into $N$ disjoint subsequences such that by denoting $\Delta_k$ the union of the intervals in the $k^{th}$ subsequence we obtain $\tau\left((A-tI)\chi_{\Delta_k}(A)\right)=\infty$ for every $k$. Take $E_1=\chi_{[0,t]\cup \Delta_1}(A)$, $E_k=\chi_{\Delta_k}(A)$ for $k\geq 2$, and $A_k:=AE_k$. Then $A=\bigoplus_{i=1}^{N}A_i$ and $\tau((A_i-tI)_+)=\infty$ for every $i$.  

\item[ ii$\,'$) \& iii$\,'$)]  
Choose $s>0$ such that $\|A\chi_{(s, \infty)}(A)\|_e= \|A\|_e$, 
(resp., such that $\tau((A\chi_{(s, \infty)}(A)-tI)_+)=\infty$ in the case when $\tau((A-tI)_+)=\infty$ for some $t>0$). 

By ii) (resp., iii)) applied to $A\chi_{(s, \infty)}(A)$, we can find a sequence of mutually orthogonal projections $F_j\le \chi_{(s, \infty)}(A)$ that commute with $A\chi_{(s, \infty)}(A)$ and hence with $A$ and   such that $\|A\chi_{(s, \infty)}(A)F_j\|_e= \|A\chi_{(s, \infty)}(A)\|_e$ (resp., and such that $\tau((A\chi_{(s, \infty)}(A)F_j-tI)_+)=\infty)$.  Thus $\|AF_j\|_e= \|A\|_e$ (resp., $\tau((AF_j-tI)_+)=\infty$) for all $j$. Let $s_n\downarrow 0$ with $s_1=s$ and let $E_j:=F_j\oplus \chi_{(s_{j+1}, s_j]}(A)$. Then it is immediate to verify that $A_j:=AE_j$ satisfy the required conditions.
\end{proof}

The next lemma is a natural analogue of Lemma \ref{gen ess spect} when dealing with type III factors. The proof is similar but simpler than the proof of Lemma \ref{gen ess spect} and hence we omit it.
\begin{lemma} \label{gen spect}
Let $\mathcal{M}$ be type III, $A \in \mathcal{M}^+$, and $N\geq 1$ an integer or $N=\infty$. 

\item [ i)] 
If $\lambda \in \sigma(A)$ then $A=\bigoplus_{i=1}^{N}AE_i$ for some mutually orthogonal non-zero projections $E_i\in\{A\}'$ with $\lambda \in \sigma(A_{E_i})$ for every $i$.
\item [ ii)] $A=\bigoplus_{i=1}^{N}A_i$ for some $A_i\in \mathcal M^+$ with $\|A_i\|=\|A\|$ for every $i$. 
\item [ ii$\,'$)] If $N=\infty$  the positive operators $A_i$ in ii) can be chosen to be locally invertible.
\end{lemma}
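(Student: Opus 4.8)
The plan is to carry out the same three steps as in the proof of Lemma~\ref{gen ess spect}, with every hypothesis or conclusion of the form ``$\tau(\,\cdot\,)=\infty$'' weakened to ``$\,\cdot\,\neq 0$''; this weakening is precisely what makes the type III argument shorter, since one no longer has to split index sets so as to keep infinitely many units of trace in each piece. The single structural fact about a type III factor that is needed is that every non-zero projection $P\in\mathcal M$ is properly infinite and has no minimal subprojection, so that for every $N$ (a positive integer or $\infty$) one can write $P=\sum_{k=1}^{N}Q_k$ as an orthogonal sum of $N$ non-zero projections.

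For i), I would first treat the case $\chi_{\{\lambda\}}(A)\neq 0$: split $\chi_{\{\lambda\}}(A)=\sum_{k=1}^{N}Q_k$ into $N$ mutually orthogonal non-zero projections, and put $E_k=Q_k$ for $k\ge 2$, $E_1=I-\sum_{k\ge 2}Q_k$. Since $A\chi_{\{\lambda\}}(A)=\lambda\,\chi_{\{\lambda\}}(A)$ and $Q_k\le\chi_{\{\lambda\}}(A)$, one has $AQ_k=\lambda Q_k=Q_kA$, so every $E_i\in\{A\}'$; moreover $A_{Q_k}=\lambda I$ and $E_1\ge Q_1$, so $\lambda\in\sigma(A_{E_i})$ for every $i$. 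In the complementary case $\chi_{\{\lambda\}}(A)=0$, choose exactly as in Lemma~\ref{gen ess spect} disjoint intervals $[c_k,d_k)$ with $c_k,d_k\to\lambda$ and $\bigcup_{k\ge L}[c_k,d_k)=[\alpha_L,\beta_L)\setminus\{\lambda\}$ for suitable $\alpha_L<\lambda<\beta_L$. As $(\alpha_L,\beta_L)$ is a neighbourhood of $\lambda\in\sigma(A)$ and $\chi_{\{\lambda\}}(A)=0$, it follows that $\sum_{k\ge L}\chi_{[c_k,d_k)}(A)=\chi_{[\alpha_L,\beta_L)}(A)\neq 0$ for every $L$, hence the set $S=\{k:\chi_{[c_k,d_k)}(A)\neq 0\}$ is infinite. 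Partition $S$ into $N$ disjoint infinite subsets $S_i$ (sending all indices outside $S$ into $S_1$), set $\Delta_i=\bigcup_{k\in S_i}[c_k,d_k)$, $E_i=\chi_{\Delta_i}(A)$ for $i\ge 2$ and $E_1=I-\sum_{i\ge 2}E_i$. Each $E_i\in\{A\}'$, and since each $S_i$ contains indices $k$ with $c_k,d_k$ arbitrarily close to $\lambda$ while $0\neq\chi_{[c_k,d_k)}(A)\le E_i$, the set $\sigma(A_{E_i})$ meets $[c_k,d_k]$ for all such $k$ and, being closed, contains $\lambda$.

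Part ii) is then immediate from i) applied with $\lambda=\|A\|\in\sigma(A)$: for $A_i:=AE_i$ one has $A_i\ge 0$ and $\|A\|\in\sigma(A_i)$, forcing $\|A_i\|=\|A\|$. For ii$'$), one may assume $\|A\|>0$ (if $A=0$, any partition of $I$ into $N$ non-zero projections works, $0$ being locally invertible by convention). Fix $s\in(0,\|A\|)$; since $\|A\|=\sup\sigma(A)\in\sigma(A)$ we get $\chi_{(s,\infty)}(A)\neq 0$ and $\|A\chi_{(s,\infty)}(A)\|=\|A\|$. The compression $\mathcal M_{\chi_{(s,\infty)}(A)}$ is again a type III factor, so applying ii) there, with $N=\infty$, to $A\chi_{(s,\infty)}(A)$ produces mutually orthogonal projections $F_j\le\chi_{(s,\infty)}(A)$ with $\sum_{j} F_j=\chi_{(s,\infty)}(A)$, $F_j\in\{A\chi_{(s,\infty)}(A)\}'$, and $\|A\chi_{(s,\infty)}(A)F_j\|=\|A\|$; because $F_j\le\chi_{(s,\infty)}(A)$ this yields $AF_j=F_jA$ and $\|AF_j\|=\|A\|$. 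Now choose $s_n\downarrow 0$ with $s_1=s$ and set $E_1=F_1\oplus\chi_{(s_2,s_1]}(A)\oplus N_A$, $E_j=F_j\oplus\chi_{(s_{j+1},s_j]}(A)$ for $j\ge 2$, and $A_j:=AE_j$. These $E_j$ commute with $A$, are mutually orthogonal, and sum to $\chi_{(s,\infty)}(A)\oplus\chi_{(0,s]}(A)\oplus N_A=I$; on the range projection of $A_j$ the operator $A$ is bounded below by $s_{j+1}$, so $A_j\ge s_{j+1}R_{A_j}$, while $\|A_j\|\ge\|AF_j\|=\|A\|$ gives $\|A_j\|=\|A\|$.

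There is no real obstacle here; the points that deserve a moment's care are the bookkeeping in ii$'$) --- absorbing $N_A$ into $E_1$ so that the $E_j$ exhaust $I$, and verifying that the range projection of $A_j$ equals $F_j\oplus\chi_{(s_{j+1},s_j]}(A)$ so that local invertibility really holds --- together with a clean statement of the divisibility of projections in a type III factor that replaces the trace manipulations of Lemma~\ref{gen ess spect}.
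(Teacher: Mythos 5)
Your proof is correct and is essentially the argument the paper has in mind: it omits the proof precisely because it is the type III analogue of Lemma \ref{gen ess spect}, with every ``infinite trace'' condition replaced by ``non-zero spectral projection'' and the trace-splitting replaced by the divisibility of non-zero (hence infinite) projections in a type III factor, which is exactly what you do. The details you supply — commutation of subprojections of $\chi_{\{\lambda\}}(A)$ with $A$, the closedness argument giving $\lambda\in\sigma(A_{E_i})$, and the bookkeeping with $N_A$ and the intervals $(s_{j+1},s_j]$ in ii$'$) — are all sound.
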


 When $\mathcal{M}$ is semifinite, $A \in \mathcal{M}^+$ and $\tau((A-I)_+)=\infty$, we know from  \cite[Theorem 6.6]{SSP} that $A$ is a sum of projections. If $\mathcal{M}$ is of type I, by further decomposing these projections it follows that $A$ is a sum of equivalent projections. We need to show that the same conclusion holds also in the case when $\mathcal{M}$ is of type II$_\infty$. We will achieve this via embedding a type I$_\infty$ factor in the II$_\infty$ factor through the following reductions.
\begin{lemma} \label{dec diff}  
Let $\mathcal{M}$ be semifinite and $A\in \mathcal{M}^+$ with $\tau(A)=\infty$, then there exist an infinite sequence of mutually orthogonal projections $E_n$ with $\tau(E_n)=1$ and a sequence of positive numbers $t_n$ with $\sum_{n=1}^{\infty}t_n=\infty$, such that $A\geq \sum_{n=1}^{\infty}t_nE_n$.
\end{lemma}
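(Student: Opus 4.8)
The plan is to read the projections $E_n$ off a dyadic spectral decomposition of $A$ and to bound $\sum_n t_n$ from below by a fixed fraction of $\tau(A)$. Since $\tau(A)=\infty$ forces $A\ne 0$, after replacing $A$ by $\|A\|^{-1}A$ (which rescales the eventual $t_n$ by $\|A\|$ and leaves $\tau(A)=\infty$ unchanged) I may assume $\|A\|\le 1$. For $k\ge1$ put $P_k:=\chi_{(2^{-k},2^{-k+1}]}(A)$; these are mutually orthogonal projections in $\{A\}'$ with $\sum_{k\ge1}P_k=R_A$, and $2^{-k}P_k\le AP_k\le 2^{-k+1}P_k$. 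Hence $A=AR_A=\sum_{k\ge1}AP_k\le\sum_{k\ge1}2^{-k+1}P_k$, so
\[
\infty=\tau(A)\le\sum_{k\ge1}2^{-k+1}\tau(P_k)=2\sum_{k\ge1}2^{-k}\tau(P_k),
\]
and therefore $\sum_{k\ge1}2^{-k}\tau(P_k)=\infty$.

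Next I would set $q_k:=\lfloor\tau(P_k)\rfloor\in\{0,1,2,\dots\}\cup\{\infty\}$, with $q_k=\infty$ precisely when $\tau(P_k)=\infty$. Since $q_k\ge\tau(P_k)-1$ for every $k$ (with the usual conventions if $\tau(P_k)=\infty$) and $\sum_{k\ge1}2^{-k}=1$, the displayed estimate gives $\sum_k 2^{-k}q_k\ge\sum_k 2^{-k}\tau(P_k)-1=\infty$; in particular, since $q_k\ge 2^{-k}q_k$, also $\sum_k q_k=\infty$, so the index family $\{(k,j):k\ge1,\ 1\le j\le q_k\}$ is infinite. For each $k$ with $q_k\ge1$ I would use that $\mathcal M$ is a factor, so $P_k$ (which has trace $\ge q_k$) contains mutually orthogonal subprojections $F_{k,1},F_{k,2},\dots$ — finitely many, namely $j\le q_k$, if $q_k<\infty$, a countable family if $q_k=\infty$ — each with $\tau(F_{k,j})=1$; put $G_k:=\sum_j F_{k,j}\le P_k$. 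Re-indexing $\{F_{k,j}\}$ as a single sequence $\{E_n\}_{n\ge1}$ and setting $t_n:=2^{-k}$ when $E_n$ is one of the $F_{k,j}$, the $E_n$ are mutually orthogonal trace-$1$ projections with $\sum_n t_n=\sum_k 2^{-k}q_k=\infty$, and
\[
\sum_n t_nE_n=\sum_k 2^{-k}G_k\le\sum_k 2^{-k}P_k\le\sum_k AP_k=A,
\]
the left-hand series converging strongly as an increasing sequence of operators bounded by $I$. This is the assertion of the lemma.

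I do not expect a genuine obstacle: the proof is a bookkeeping argument, and it is uniform in the type (I$_\infty$ or II$_\infty$), with no need to separate the cases $\|A\|_e=0$ and $\|A\|_e>0$. The two points that need care are that the divergence of $\sum_k 2^{-k}\tau(P_k)$ is exactly where the hypothesis $\tau(A)=\infty$ enters (through $A\le\sum_k 2^{-k+1}P_k$), and that $\tau(P_k)$ is in general neither an integer nor finite, which is why one passes to $q_k=\lfloor\tau(P_k)\rfloor$ and then checks separately that the resulting collection of trace-$1$ projections is infinite. In the type I$_\infty$ case these complications disappear, since then $\tau(P_k)$ is already a (possibly infinite) nonnegative integer and $P_k$ decomposes into exactly $\tau(P_k)$ rank-one projections.
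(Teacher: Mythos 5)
Your proof is correct, and it takes a genuinely different route from the paper's. You normalize $\|A\|\le 1$, take the dyadic spectral bands $P_k=\chi_{(2^{-k},2^{-k+1}]}(A)$, use $A\le\sum_k 2^{-k+1}P_k$ and $\tau(A)=\infty$ to get $\sum_k 2^{-k}\tau(P_k)=\infty$, and then carve $\lfloor\tau(P_k)\rfloor$ mutually orthogonal trace-one subprojections out of each $P_k$ (possible in a factor: minimal projections in type I$_\infty$, the full range $[0,\tau(P_k)]$ of the trace on subprojections in type II$_\infty$), assigning weight $2^{-k}$ to each; the floor-versus-trace loss costs at most $\sum_k 2^{-k}=1$, so $\sum_n t_n=\infty$, and $\sum_n t_nE_n\le\sum_k 2^{-k}P_k\le A$ with strong convergence by monotone boundedness. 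The paper instead splits on $\|A\|_e$: when $\|A\|_e>0$ it uses a single spectral projection of infinite trace with a constant weight, and when $\|A\|_e=0$ (the type II$_\infty$ case being the only nontrivial one) it runs an inductive construction based on the singular values $\mu_t(A)$, producing trace-one projections $E_j$ that lie in $\{A\}'$, satisfy $\sum_j E_j=\chi_{(0,\infty)}(A)$, and sandwich $A$ as $\sum_j t_jE_j\le A\le\sum_j t_{j-1}E_j$. Your argument is shorter and uniform in the type of the factor and in whether $\|A\|_e$ vanishes, at the price of giving up the extra structure (commutation of the $E_n$ with $A$ and the two-sided estimate) that the paper's construction yields; that extra structure is not required by the statement nor by its subsequent use in Lemma 2.4, where only the stated inequality (which already forces $E_n\le R_A$) is needed, so nothing downstream is affected.
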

\begin{proof}
If $\|A\|_e>0$, then there is some $t>0$ for which $\tau\left(\chi_{(t,\infty)}(A)\right)=\infty$ and $A\geq t\chi_{(t,\infty)}(A)$.
The conclusion is now obvious by choosing mutually orthogonal projections $E_n$ such that $\tau(E_n)=1$ and $\chi_{(t,\infty)}(A)= \sum_{n=1}^{\infty}E_n$. 

Assume now that $\|A\|_e=0$. The case when $\mathcal{M}$ is a type I$_{\infty}$ factor is immediate since $A$ is then diagonalizable. 
So we can assume that $\mathcal{M}$ is type II$_{\infty}$. For $t\geq 0$ let  $\mu_t(A)$ be the $t^{th}$ singular value of $A$, that is $$\mu_t(A)=\inf \{\,s\geq0\mid \tau\left(\chi_{(s,\infty)}(A)\right)\leq t\,\}.$$ For equivalent definitions and properties of the singular values see \cite{F}.

Let $t_1=\mu_1(A)$. Then $\|A\|\geq t_1>0$ because $\tau(A)=\infty$ implies that $\tau\left(\chi_{(0,\infty)}(A)\right)=\infty$ and $$\tau\left(\chi_{(t_1,\infty)}(A)\right)\leq 1\leq \tau\left(\chi_{[t_1,\infty)}(A)\right)<\infty,$$
the latter inequality holding because $\|A\|_e=0$. 

Therefore $$0\leq1-\tau\left(\chi_{(t_1,\infty)}(A)\right)\leq\tau\left(\chi_{\{t_1\}}(A)\right)< \infty$$
and we can find a projection $P_1\leq \chi_{\{t_1\}}(A)$ such that $\tau(P_1)=1-\tau\left(\chi_{(t_1,\infty)}(A)\right)$. 

Let  $E_1=\chi_{(t_1,\infty)}(A)+P_1$. Then $E_1$ has trace 1 and commutes with $A$. 

Apply the above step to $A_1=A(I-E_1)$ to find  $0<\mu_1(A_1)=t_2\leq t_1$ and a trace 1 projection $E_2$ such that $E_2=\chi_{(t_2,\infty)}(A_1)+P_2$ for some projection $P_2\leq \chi_{\{t_2\}}(A_1)$. Since $I-E_1=\chi_{[0,t_1)}(A)+\left(\chi_{\{t_1\}}(A)-P_1\right)$ we see that $$E_2=\chi_{(t_2,\infty)}(A)(I-E_1)+P_2=\chi_{(t_2,t_1)}(A)+P_2+\left(\chi_{\{t_1\}}(A)-P_1\right).$$ 

Reasoning in the same way we construct by induction a non-increasing sequence of positive numbers $\{t_j\}_{j=1}^{\infty}$ and two sequences of projections $\{E_j\}_{j=1}^{\infty}$ and $\{P_j\}_{j=1}^{\infty}$ such that $\tau(E_j)=1$, $P_j \leq \chi_{\{t_j\}}(A)$ and $$E_j=\chi_{(t_j,t_{j-1})}(A)+P_j+\left(\chi_{\{t_{j-1}\}}(A)-P_{j-1}\right)$$ for every $j$, where $t_0=\|A\|$ and $P_0=0$.

We have that the projections $E_j$ are mutually orthogonal, and $$\chi_{(t_n,\infty)}(A)\leq E_1+\ldots+E_n\leq \chi_{[t_n,\infty)}(A)\quad\text{for every $n$.}$$ Together with $\tau(E_1+\ldots+E_n)=n$ and $\|A\|_e=0$ this implies that $\lim_{n\to {\infty}}t_n=0$ and therefore $\sum_{j=1}^{\infty}E_j=\chi_{(0,\infty)}(A)$.

From the construction we have that $E_j\in \{A\}'$ and $$\chi_{(t_j,t_{j-1})}(A)\le E_j\le \chi_{[t_j,t_{j-1}]}(A).$$ Hence $t_jE_j\leq AE_j\leq t_{j-1}E_j$ and hence $$\sum_{j=1}^{\infty}t_jE_j\le A\le \sum_{j=1}^{\infty}t_{j-1}E_j.$$ The conclusion is now obvious.
\end{proof} 

\begin{lemma} \label{trace geq 1}
Let $\mathcal{M}$ be type II$_\infty$ and $A \in \mathcal{M}^+$ with $\tau((A-I)_+)=\infty$. Then $A=\sum_{i=1}^{\infty}R_i$ for some projections $R_i$ with $\tau(R_i)\geq1$.
\end{lemma}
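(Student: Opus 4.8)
The plan is to bootstrap from the type $\mathrm{I}_\infty$ case, where the statement is immediate: there \cite[Theorem 6.6]{SSP} gives $A=\sum_j P_j$ with the $P_j$ projections, and refining each $P_j$ into a (finite or infinite) sum of mutually orthogonal rank-one projections — all of trace $1$ — finishes the job. The whole difficulty is therefore the type $\mathrm{II}_\infty$ case, where projections of small trace are the enemy, and Lemma \ref{dec diff} is precisely the device that manufactures trace-one projections.

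The first move is to extract a type $\mathrm{I}_\infty$ ``skeleton'' from the part of $A$ above $I$. Since $(A-I)_+\in\mathcal M$ is bounded and has infinite trace, its range projection $\chi_{(1,\infty)}(A)$ has infinite trace as well, so applying Lemma \ref{dec diff} to $(A-I)_+$ yields mutually orthogonal projections $E_n\in\{A\}'$ with $\tau(E_n)=1$, $E_n\le\chi_{(1,\infty)}(A)$, together with reals $t_n>0$ such that $\sum_n t_n=\infty$ and $(A-I)_+\ge\sum_n t_nE_n$; thus on each $E_n$ one has a ``projection-plus-surplus'' splitting $AE_n=E_n+G_n$ with $G_n:=(A-I)E_n\ge t_nE_n\ge0$. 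Fixing partial isometries $V_n$ with $V_n^*V_n=E_1$ and $V_nV_n^*=E_n$ realizes $Q:=\sum_n E_n$ as the identity of a type $\mathrm{I}_\infty$ subfactor $\mathcal N\cong B(\ell^2)$ whose minimal projections have $\tau$-trace $1$, which is the embedding used for bookkeeping.

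Next, the complementary part $A(I-Q)$ is a bounded positive operator carrying the ``deficiency'' $(I-A)_+$, and the idea is to cancel this deficiency against the inexhaustible surplus $\{G_n\}$ (inexhaustible because $\sum_n t_n=\infty$). I would partition the indices into consecutive finite blocks $B_k$ with $\sum_{n\in B_k}t_n\ge1$; choosing mutually orthogonal trace-one projections $F_k\le I-Q$ and using the matrix units of $\mathcal N$ to organize the pairing, I would then decompose, for each $k$, the positive operator $\big(\bigoplus_{n\in B_k}AE_n\big)\oplus AF_k$ inside the type $\mathrm{II}_1$ corner $\big(\sum_{n\in B_k}E_n+F_k\big)\mathcal M\big(\sum_{n\in B_k}E_n+F_k\big)$ as a sum of projections of trace $\ge1$: its range projection has trace $|B_k|+1$, and its surplus $\ge\sum_{n\in B_k}t_n\ge1$ dominates the at-most-one deficiency contributed by $F_k$, which should put it within reach of a Fillmore/Kadison--Pedersen type sum-of-projections criterion subject to the ``trace $\ge1$'' constraint. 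Summing over $k$ — having arranged the $F_k$ to exhaust $I-Q$ — together with a direct treatment of any part of $A$ that is already $\ge I$ on the remaining piece (again via the type $\mathrm{I}_\infty$ result applied inside $\mathcal N$ to a suitable diagonal operator $\mathrm{diag}(1+t_n,\dots)$, which satisfies $\tau\big((\,\cdot\,-Q)_+\big)=\sum_n t_n=\infty$), reassembles $A=\sum_i R_i$.

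The step I expect to be the main obstacle is exactly the decomposition inside each $\mathrm{II}_1$ corner: a ``trace $\ge1$'' refinement of the Kadison--Pedersen/Fillmore criterion for a positive operator to be a sum of projections, together with the combinatorial bookkeeping that pairs the deficient corners with enough surplus, uniformly, so that the family $\{R_i\}$ genuinely sums to $A$ in the strong topology — equivalently, so that the untreated remainder tends to $0$. The embedded $\mathcal N$ is what makes the ``skeleton'' half of the problem transparent; the pinching inside finite-trace corners is what carries the ``deficiency'' half.
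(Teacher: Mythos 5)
There are two gaps, and the second one is fatal to the block-by-block strategy as you set it up. First, the reassembly step ``summing over $k$ reassembles $A$'' requires $A$ to be reduced by every block $\sum_{n\in B_k}E_n+F_k$, i.e.\ all the $E_n$ and $F_k$ must lie in $\{A\}'$; otherwise the off-diagonal parts of $A$ relative to your partition are simply discarded, and expressions like $AE_n$, $AF_k$ are not even positive. Lemma \ref{dec diff} does not give $E_n\in\{A\}'$ (when $\|(A-I)_+\|_e>0$ its proof just cuts $\chi_{(t,\infty)}((A-I)_+)$ into arbitrary trace-one pieces), and your $F_k$ are arbitrary trace-one subprojections of $I-Q$, so commutation is nowhere guaranteed. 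This part is repairable with extra work (cut spectral projections and, at jumps of the distribution function, subprojections of eigenprojections, which do commute with $A$; also handle the case $\tau(I-Q)<\infty$, possibly non-integral or zero), but as written it is unjustified.

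The decisive gap is the decomposition inside each $\mathrm{II}_1$ corner, which you yourself flag as the main obstacle: there is no citable ``trace $\ge1$'' refinement of the sum-of-projections criterion, and with your blocks the desired statement is actually false. Take a block consisting of a single $E_n$ with $AE_n=(1+t_n)E_n$ and $t_n=\tfrac32$ (so the block condition $\sum_{n\in B_k}t_n\ge1$ holds), paired with an $F_k$ on which $A$ vanishes (nothing prevents $F_k\le N_A$). The block operator is $\tfrac52E_n\oplus 0$ in a corner of trace $2$; its surplus $\tfrac32$ dominates the deficiency $1$, yet every projection below its range projection $E_n$ has trace at most $1$, so summands of trace $\ge1$ would all have to equal $E_n$, and no sum of copies of $E_n$ equals $\tfrac52E_n$. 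The point is that excess trapped over a trace-one projection cannot be spread out inside a finite-trace corner, so ``surplus dominates deficiency within the block'' is not sufficient. The paper's proof avoids finite corners entirely: it uses the type $\mathrm{I}_\infty$ embedding only to decompose the \emph{global} diagonal operator $\sum_n(1+t_n)E_n$ into trace-one projections via \cite[Theorem 1.1 i)]{SSP}, obtaining $A\ge\sum_iP_i$ with $\tau(P_i)=1$ and a positive remainder; it then splits $A$ by Lemma \ref{gen ess spect} iii) into four commuting summands with $\tau((A_i-I)_+)=\infty$, decomposes remainder-plus-fresh-summand into \emph{arbitrary} projections by \cite[Theorem 6.6]{SSP}, and finally pads each such (possibly small) projection with a spare trace-one projection supported on an orthogonal summand, $R_j=P_j^1\oplus Q_j^2$. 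That global padding trick, rather than any trace-constrained decomposition in $\mathrm{II}_1$ corners, is the idea your proposal is missing.
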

\begin{proof}
We first show that $A\geq \sum_{n=1}^{\infty}P_n$ for some sequence $\{P_n\}$ of trace 1 projections.
Apply Lemma \ref{dec diff} to $(A-I)_+$ to find trace 1 mutually orthogonal projections $E_n$ and  $t_n\ge 0$ such that  $(A-I)_+\ge \sum_{n=1}^{\infty}t_nE_n$ and $\sum_{n=1}^{\infty}t_n=\infty$. Set $A_0:=(A-I)_+- \sum_{n=1}^{\infty}t_nE_n$.

Then 
\begin{align}
A&=A\chi_{(0,1]}(A)+\chi_{(1,\infty)}(A)+(A-I)_+\notag \\
&=A\chi_{(0,1]}(A)+\chi_{(1,\infty)}(A)+A_0+\sum_{n=1}^{\infty}t_nE_n\notag \\
&=A\chi_{(0,1]}(A)+A_0+\big(\chi_{(1,\infty)}(A)-\sum_{n=1}^{\infty}E_n\big)+\sum_{n=1}^{\infty}(t_n+1)E_n\notag\\
&=A_1+A_2,\notag
\end{align}
where 
\ba A_1&=A\chi_{(0,1]}(A)+A_0+\big(\chi_{(1,\infty)}(A)-\sum_{n=1}^{\infty}E_n\big)\geq 0\\A_2&=\sum_{n=1}^{\infty}(t_n+1)E_n.
\end{align*}  Then $ \tau((A_2-I)_+)=\sum_{n=1}^{\infty}t_n=\infty.$

By considering a separable, infinite dimensional Hilbert space $\mathcal{H}$ and an injective, normal, *-homomorphism of $B(\mathcal{H})$ into $\mathcal{M}$ such that the projections $E_{n}$ correspond to rank one projections in $B(\mathcal{H})$ that sum to $I$, we can apply \cite [Theorem 1.1 i)]{SSP} to conclude that $A_2=\sum_{i=1}^{\infty}P_i$ where $\tau(P_i)=1$ for all $i$. 
Hence $A\geq \sum_{i=1}^{\infty}P_i$ and $\tau(P_i)=1$ for all $i$.

Now by applying Lemma \ref{gen ess spect} $iii)$ we can write $A$ as $A=\bigoplus_{i=1}^4A_i$ such that $\tau((A_i-I)_+)=\infty$ for $1\le i\le4$. From the first part of the proof  decompose $A_i$ for $1\le i\le 2$ as $A_i=\sum_{j=1}^{\infty}P_j^i+B_i$ with projections $\tau(P_j^i)=1$ for every $j\geq 1$ and with remainder $B_i\geq 0$. Then $$A=\Big(\sum_{j=1}^{\infty}P_j^1+(B_1\oplus A_3)\Big)\oplus \Big(\sum_{j=1}^{\infty}P_j^2+(B_2\oplus A_4)\Big).$$

Since $$\tau(((B_1\oplus A_3)-I)_+)=\tau(((B_2\oplus A_4)-I)_+)=\infty,$$  
we can apply \cite[Theorem 6.6]{SSP} to conclude that $$B_1\oplus A_3=\sum_{j=1}^{\infty}Q_j^1\quad\text{and}\quad B_2\oplus A_4=\sum_{j=1}^{\infty}Q_j^2$$ for some projections $Q_j^1$ and $Q_j^2$.
Then 
\begin{align*}
A&=\Big(\sum_{j=1}^{\infty}P_j^1+\sum_{j=1}^{\infty}Q_j^1\Big)\oplus \Big(\sum_{j=1}^{\infty}P_j^2+\sum_{j=1}^{\infty}Q_j^2\Big)\\ &=\sum_{j=1}^{\infty}\left(P_j^1\oplus Q_j^2\right)+\sum_{j=1}^{\infty}\left(P_j^2\oplus Q_j^1\right).
\end{align*}
Since $\tau\left(P_j^1\oplus Q_j^2\right)\geq 1$ and $\tau\left(P_j^2\oplus Q_j^1\right)\geq 1$ for every $j$, this  concludes the proof. 
\end{proof}

\begin{proposition}\label{proj}
Let $\mathcal{M}$ be semifinite,  $0\ne P\in \mathcal{M}$ be a projection, and $A\in \mathcal{M}^+$ with 
$\tau((A-I)_+)=\infty$. Then $A=\sum_{j=1}^{\infty}P_j$ for some projections $P_j\sim P$.
\end{proposition}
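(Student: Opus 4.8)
The statement says: if $\mathcal M$ is semifinite, $P\ne 0$ is a projection, and $A\in\mathcal M^+$ satisfies $\tau((A-I)_+)=\infty$, then $A=\sum_j P_j$ with $P_j\sim P$. The plan is to reduce to the two previous results on sums of projections (the type I case is handled by \cite[Theorem 1.1]{SSP} plus subdividing rank-one projections; the type II$_\infty$ case is exactly Lemma \ref{trace geq 1}) and then to massage the resulting projections so that each is equivalent to the \emph{prescribed} $P$.

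\medskip

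\textbf{Step 1: split $\mathcal M$ by type.} If $\mathcal M$ is type III there is nothing to do ($\tau$ is not relevant; actually this case is excluded since $\tau((A-I)_+)=\infty$ presupposes a trace), so assume $\mathcal M$ is type I$_\infty$ or type II$_\infty$. Also dispose of the trivial case $\tau(P)=0$, i.e.\ $P=0$, which is excluded by hypothesis, and note $P$ may be finite or infinite.

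\medskip

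\textbf{Step 2: the case $\tau(P)=\infty$ (i.e.\ $P$ infinite).} First I would use Lemma \ref{gen ess spect} iii) (resp.\ the obvious analogue) to write $A=\bigoplus_{i=1}^{\infty}A_i$ with $\tau((A_i-I)_+)=\infty$ for each $i$, so each $A_i$ is a sum of projections; grouping these into infinitely many infinite-rank (infinite-trace) pieces $A=\bigoplus_{i=1}^\infty B_i$ where each $B_i$ is a sum of infinitely many projections of positive trace. Then $B_i$ itself, being a positive operator with $\tau((B_i-I)_+)=\infty$ in the infinite factor $\mathcal M$, has a sum-of-projections decomposition in which we may absorb infinitely many summands into one infinite projection equivalent to $P$ (since any projection that dominates infinitely many mutually orthogonal projections of positive trace is infinite, hence $\sim P$ when $P$ is infinite, by the comparison theory in a factor). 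Reassembling gives $A=\sum_j P_j$ with each $P_j$ infinite, hence $P_j\sim P$.

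\medskip

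\textbf{Step 3: the main case $\tau(P)<\infty$.} Here the point is that $\tau(P)$ is a fixed finite positive number, while the projections produced by \cite{SSP}/Lemma \ref{trace geq 1} have traces that are merely $\ge 1$ (type II) or equal to $1$ (type I, with $\tau$ normalized on rank-one projections). The idea: first reduce to $\tau(P)\le 1$ by passing to a ``$\tau(P)$-fold amplification'' — more precisely, choose an integer/real $m$ with $\tau(P)=m\cdot\beta$... actually the clean route is: (a) if $\tau(P)=c$, apply Lemma \ref{gen ess spect} iii) to split $A$ into $\lceil c\rceil$ (or more) pieces each with $\tau((A_i-I)_+)=\infty$, get a sum-of-projections decomposition of each, and then \emph{regroup $\lceil c\rceil$-at-a-time} so that the blocks $P_j^{(1)}\oplus\cdots\oplus P_j^{(\lceil c\rceil)}$ have trace $\ge c$; (b) then invoke the fact that in an infinite factor a projection $Q$ with $\tau(Q)\ge \tau(P)$ splits as $Q=Q'\oplus Q''$ with $Q'\sim P$, reducing us to the case $\tau(P_j)=\tau(P)$ exactly, whence $P_j\sim P$ by the comparison theorem. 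The leftover pieces $Q''$ (and any over-counted blocks) are collected and re-fed into \cite[Theorem 6.6]{SSP}, exactly as in the proof of Lemma \ref{trace geq 1}, to be written as sums of projections, then re-paired with fresh $P$-summands — a bootstrapping like the final paragraph of that lemma. The subcase $\tau(P)\le 1$ in type I: here projections have trace $\in\{1,2,3,\dots\}\cdot(\text{min})$, and one needs $\tau(P)$ to be an integer multiple of the minimal trace or else $P$ can be written directly... in type I$_\infty$ every nonzero projection has integer trace, so if $\tau(P)=k$ we split each trace-$1$ projection from \cite{SSP} — no wait, we \emph{combine} $k$ of them, again by regrouping.

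\medskip

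\textbf{The main obstacle.} The real work is the bookkeeping in Step 3 that turns ``projections of trace $\ge\tau(P)$, in the right count'' into ``projections of trace exactly $\tau(P)$, with the remainders harmlessly recycled,'' making sure the recycling terminates in a genuine $\sum_j P_j$ convergent in SOT with \emph{every} index hit — this is precisely the two-stage regrouping trick of Lemma \ref{trace geq 1}, and I expect the proof to mirror it closely, with Lemma \ref{gen ess spect} iii), Lemma \ref{trace geq 1}, and \cite[Theorem 6.6]{SSP} as the three inputs, plus the elementary fact that in a factor $Q\sim P$ whenever $\tau(Q)=\tau(P)$ (finite) or both are infinite, together with the splitting $Q=Q'\oplus Q''$, $Q'\sim P$, available whenever $\tau(Q)\ge\tau(P)$.
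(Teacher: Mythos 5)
Your Step 2 (infinite $P$) is essentially the paper's argument, but be careful with the phrase ``absorb infinitely many summands into one infinite projection'': the summands of a sum-of-projections decomposition of a single block are \emph{not} mutually orthogonal, so their sum is not a projection. What makes the regrouping legitimate (and is what the paper does) is that summands taken from \emph{distinct} orthogonal blocks $A_i$ are mutually orthogonal; one therefore sets $P_j=\bigoplus_{i=1}^\infty P_{ij}$, picking the $j$-th projection from each block, and this is an infinite projection, hence $\sim P$ in the $\sigma$-finite factor.

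The genuine gap is in Step 3, the case $\tau(P)<\infty$. Your regroup-and-split scheme does not close: after writing blocks of trace $\ge\tau(P)$ and splitting each as $Q_j=Q_j'\oplus Q_j''$ with $Q_j'\sim P$, the leftover $R=\sum_j Q_j''$ is just some positive operator with no reason to satisfy $\tau((R-I)_+)=\infty$, so \cite[Theorem 6.6]{SSP} need not apply to it; and even when a remainder can be decomposed, it again yields projections of trace $\ge 1$ rather than exactly $\tau(P)$, producing new leftovers, so the ``bootstrapping'' never terminates as described (you yourself flag this as the unresolved obstacle). The point you are missing is that no trace bookkeeping is needed at all: by the infinite case, $A=\sum_{i=1}^\infty Q_i$ with each $Q_i$ an infinite projection, and each infinite projection is itself an \emph{orthogonal} sum of countably many projections equivalent to $P$ --- a countable orthogonal sum of copies of the nonzero $P$ is an infinite projection, hence equivalent to $Q_i$ since $\mathcal M$ is $\sigma$-finite, and one conjugates that orthogonal decomposition under $Q_i$. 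Writing $Q_i=\sum_{j=1}^\infty P_{ij}$ with $P_{ij}\sim P$ and relabeling finishes the proof; this is exactly the paper's two-line reduction of the finite case to the infinite case, and it replaces your entire recycling construction.
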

\begin{proof}

Assume first that $P$ is infinite.
By applying Lemma \ref{gen ess spect} $iii)$ we can write $A=\bigoplus_{i=1}^{\infty}A_i$ with $\tau((A_i-I)_+)=\infty$ for every $i$. Using Lemma \ref{trace geq 1} when $\mathcal{M}$ is type II$_{\infty}$ and \cite[Theorem 1.1 i)]{SSP} when $\mathcal{M}$ is type I$_{\infty}$ we obtain that $A_i=\sum_{j=1}^{\infty}P_{ij}$ with $\tau(P_{ij})\geq 1$ for every $j$ and every $i$.
So $$A=\bigoplus_{i=1}^{\infty}A_i=\bigoplus_{i=1}^{\infty}
\sum_{j=1}^{\infty}P_{ij}=\sum_{j=1}^{\infty}\bigoplus_{i=1}^{\infty}P_{ij}
=\sum_{j=1}^{\infty}P_{j}$$ where $P_j=\bigoplus_{i=1}^{\infty}P_{ij}$ is an infinite projection for every $j$. Since $\mathcal{M}$ is $\sigma$-finite we conclude that $P_j\sim P$ for every $j$.

If $P$ is finite decompose first $A$ as $A=\sum_{i=1}^{\infty}Q_i$ with $Q_i$ infinite and then write each $Q_i$ as $Q_i=\sum_{j=1}^{\infty}P_{ij}$ with $P_{ij}\sim P$ for every $j$ and $i$.

\end{proof}

\begin{corollary}\label{sequence of projections}
Let $\mathcal{M}$ be semifinite,  $A\in \mathcal{M}^+$ with $\tau((A-I)_+)=\infty$, $\{P_j\}_{j=1}^{\infty}\subseteq \mathcal{M}$ be a sequence of projections where either infinitely many of the projections $P_j$ are infinite or finitely many are infinite and $\sum\{ \tau(P_j)\mid \tau(P_j)<\infty\}=\infty$. Then $A=\sum_{j=1}^{\infty}Q_j$ for some $Q_j\sim P_j$.
\end{corollary}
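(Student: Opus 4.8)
The plan is to reduce everything to Proposition~\ref{proj}. Applying that proposition with $P=I$ (which is a non-zero projection, and an infinite one since $\mathcal{M}$ is an infinite factor) yields a decomposition $A=\sum_{i=1}^{\infty}R_i$ with $R_i\sim I$ for all $i$; in particular each $R_i$ is an infinite projection. Since $\mathcal{M}$ is a $\sigma$-finite factor every infinite projection is equivalent to $I$, hence any two infinite projections are equivalent. The corollary will therefore follow from the \emph{packing statement}: if $R$ is an infinite projection and $\{P_j\}_{j\in S}$ is a countable subfamily of the given projections such that either some $P_j$ with $j\in S$ is infinite, or $\sum\{\tau(P_j)\mid j\in S,\ \tau(P_j)<\infty\}=\infty$, then $R=\bigoplus_{j\in S}Q_j$ for some $Q_j\sim P_j$. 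Indeed, granting this, partition the index set $\{1,2,\dots\}$ of the given sequence into countably many pieces $S_1,S_2,\dots$, each satisfying the hypothesis of the packing statement, apply the packing statement to write $R_i=\bigoplus_{j\in S_i}Q_j$, and conclude $A=\sum_iR_i=\sum_i\sum_{j\in S_i}Q_j=\sum_jQ_j$ with $Q_j\sim P_j$.

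Such a partition exists by the dichotomy in the hypothesis: if infinitely many $P_j$ are infinite, put exactly one such index into each $S_i$ (possible, both sets being countably infinite) and distribute the remaining indices arbitrarily, so that every $S_i$ contains an ``infinite'' index; if only finitely many $P_j$ are infinite but $\sum\{\tau(P_j)\mid\tau(P_j)<\infty\}=\infty$, split that divergent series of finite traces into infinitely many divergent subseries, let $S_i$ consist of the indices contributing to the $i$-th subseries, and distribute the finitely many ``infinite'' indices arbitrarily. For the packing statement itself, transport by a partial isometry so that $R=I$, and write $S=J_S\sqcup L_S$ where $J_S$ is the (finite, by the above choices) set of indices with $P_j$ infinite and $L_S$ is the rest. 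Fix an orthogonal decomposition $I=I'\oplus I''$ with $I'\sim I''\sim I$. Inside $I'$ construct mutually orthogonal $Q_j$, $j\in L_S$, with $\tau(Q_j)=\tau(P_j)$: if $\sum_{j\in L_S}\tau(P_j)=\infty$, do this by cutting an auxiliary orthogonal decomposition of $I'$ into trace-one projections (rank-one projections when $\mathcal{M}$ is type I$_\infty$) at the partial sums $\sum_{k\le j}\tau(P_k)$, so that $\bigoplus_{j\in L_S}Q_j=I'$ exactly; if $\sum_{j\in L_S}\tau(P_j)<\infty$, peel off such $Q_j$ and absorb the (infinite) remainder into $I''$. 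Then decompose $I''$ into $|J_S|$ mutually orthogonal infinite projections $Q_j$, $j\in J_S$. Now $\bigoplus_{j\in S}Q_j=I'\oplus I''=I$, and $Q_j\sim P_j$ for every $j\in S$: for $j\in L_S$ because two projections of equal finite trace in a semifinite factor are equivalent, and for $j\in J_S$ because any two infinite projections are equivalent. (When $J_S=\emptyset$ the packing hypothesis forces $\sum_{j\in L_S}\tau(P_j)=\infty$, so $I''$ is not needed and the first case applies directly.)

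The operator-theoretic input is exactly what already underlies Proposition~\ref{proj}: comparison theory in a semifinite factor, together with the fact that in a $\sigma$-finite (properly) infinite factor every infinite projection is equivalent to $I$. The one point needing a little care is the \emph{exact} exhaustion $\bigoplus_{j\in L_S}Q_j=I'$ in the divergent case -- one must cut the auxiliary trace-one ribbon at the prescribed partial sums rather than peel off greedily -- but this is routine. I expect the partitioning and packing bookkeeping, rather than any analytic difficulty, to be the only thing that needs to be set up carefully.
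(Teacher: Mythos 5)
Your proposal is correct, and its skeleton coincides with the paper's: both apply Proposition \ref{proj} to write $A=\sum_k R_k$ with each $R_k$ infinite (hence $\sim I$ in a $\sigma$-finite factor), and both partition the index set so that each block is ``infinite in total'' in the sense you describe. Where you diverge is in the packing step. You realize each $R_k$ as an exact orthogonal sum of $Q_j\sim P_j$ by trace-matching: splitting off the finitely many infinite indices, cutting a trace-one ribbon at the partial sums $\sum_{k\le j}\tau(P_k)$ to get exact exhaustion, and invoking ``equal finite trace $\Rightarrow$ equivalent'' plus ``all infinite projections are equivalent.'' The paper avoids all of this bookkeeping with a single transport argument: first replace the $P_j$ by mutually orthogonal equivalent projections (conjugating by partial isometries), then note that $\bigoplus_{j\in J_k}P_j$ is an infinite projection, hence $R_k\sim\bigoplus_{j\in J_k}P_j$; if $V$ implements this equivalence, the projections $Q_j:=VP_jV^*$ are automatically mutually orthogonal, equivalent to $P_j$, and sum \emph{exactly} to $R_k$. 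This sidesteps the type I$_\infty$ versus type II$_\infty$ distinction, the exact-exhaustion worry you flag, and the finite-trace/infinite-trace subcases in your packing statement. Your route buys nothing extra here beyond being explicit about comparison theory; the paper's is shorter and uniform, so if you keep your write-up it would be worth at least noting that the careful ribbon-cutting can be replaced by pushing forward the whole orthogonal family at once.
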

\begin{proof}
By passing if necessary to equivalent projections we can assume that the projections $P_j$ are mutually orthogonal. The assumption on the sequence $\{P_j\}$ guarantees that we can write $\mathbb{N}=\bigcup_{k=1}^\infty J_k$ for some infinite, disjoint sets $J_k$ such that $\sum_{j\in J_k}\tau(P_j)=\infty$ for every $k$.

By Proposition \ref {proj}, $A=\sum_{k=1}^{\infty}R_k$ with $R_k$ infinite projections for every $k$. 
But then  $R_k\sim \bigoplus_{j\in J_k}P_j$ and therefore $R_k=\sum_{j\in J_k}Q_j$ with $Q_j\sim P_j$ for $j\in J_k$, $k\ge1$.
The conclusion is now obvious. 
\end{proof}

\begin{remark}
Assume that $A=\sum_{j=1}^{\infty}Q_j$ with $Q_j\sim P_j$ where the projections   $P_j$ are infinite for $1\le j\le n$ and $\sum_{j=n+1}^{\infty}\tau(P_j)<\infty$. Then $\sum_{j=n+1}^{\infty}Q_j$ is trace class and consequently $\|A\|_e=\|Q_1+\dots+Q_n\|_e\le n$. Thus the conclusion of Corollary \ref {sequence of projections} fails if $\|A\|_e>n$.
\end{remark}

\section{Sums of equivalent copies of a sequence of operators}
It is clear that if we want to decompose a ``large" $A\in  \mathcal{M}^+$ into $A=\sum_{j=1}^\infty C_j$ for some $C_j\sim B_j$, the sequence $B_j$ cannot be ``too small". 

An obvious obstruction is that  if every $B_j$ belongs to the ideal of compact operators $\mathcal J $, $\sum_{j=1}^\infty \|B_j\|< \infty$, and $A=\sum_{j=1}^\infty C_j$ for some $C_j\sim B_j$, then $A\in \mathcal J$. The same conclusion holds if $B_j=B_j'+ B''_j$ with $B_j', B''_j\in \mathcal J^+$, 
 $\sum_{j=1}^\infty \|B'_j\|< \infty$, and $\sum_{j=1}^\infty \tau(B''_j)< \infty$.

 A natural condition on the sequence $\{B_j\}_{j=1}^{\infty}$ to avoid this obstruction is to ask that $\inf_{j\ge1}\tau(\chi_{(\beta, \infty)}(B_j))>0$ for some $\beta>0$. This is equivalent to the existence of a non-zero projection $P\prec \chi_{(\beta, \infty)}(B_j)$, that is, to the condition $B_j\ge \beta P_j$ with $P_j\sim P$.  
 Compare  with \cite [Lemma 2.4]{SMV} where the condition on $B_j$ is that $B_j\ge \beta R_{B_j}$ for some $\beta >0$ and all $j$.

 We start by showing that if $A\in  \mathcal{M}^+$ is ``large" with respect to the norms of the sequence $\{B_j\}$, then we can absorb a sequence of ``copies" of these operators into $A$ and be left with a ``large" remainder.

\begin{lemma}\label{subsume a sequence} 
Let $A\in \mathcal{M}^+$ and $\{B_j\}_{j=1}^\infty \subseteq \mathcal{M}^+$ with $\alpha:=\sup_{j\ge1}\|B_j\|<\infty$ and $\tau((A-\alpha I)_+)=\infty$ if $\mathcal{M}$ is semifinite  (resp., $\|A\|>\alpha$ when $\mathcal{M}$ is type III). Then $A=\sum_{j=1}^\infty C_j+A'$ for some $C_j\sim B_j$, with the projection $N_{C_j}$ on the null space of $C_j$ being infinite for every $j$, and $A'\ge 0$ with $\tau((A'-\alpha I)_+)=\infty$ and $\|A'\|_e=\|A\|_e$ if $\mathcal{M}$ is semifinite and $\|A'\|=\|A\|$ when $\mathcal{M}$ is type III.
\end{lemma}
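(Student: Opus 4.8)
The plan is to build the $C_j$ one block at a time, each living on its own infinite projection, using the decomposition lemmas of Section~2 to keep a ``large'' remainder at every stage. First I would invoke Lemma~\ref{gen ess spect}~iii$\,'$) (resp. Lemma~\ref{gen spect}~ii$\,'$) in the type III case) to split $A$ as $A=\bigoplus_{i=0}^{\infty}A_i$ with each $A_i\in\mathcal M^+$ locally invertible and still satisfying $\tau((A_i-\alpha I)_+)=\infty$ (resp. $\|A_i\|>\alpha$): this is possible because $\tau((A-\alpha I)_+)=\infty$ forces $\|A\|_e>\alpha$ or $\|A\|_e=\alpha$, and in the semifinite case Lemma~\ref{gen ess spect}~iii$\,'$) delivers infinitely many such summands; one should also peel off one extra summand $A_0$ on which $\|A_0\|_e=\|A\|_e$ (resp. $\|A_0\|=\|A\|$) is preserved, to be folded into $A'$ at the end. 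The key point is that each $A_i$ with $i\ge1$ now carries enough ``mass'' above $\alpha$ to swallow a copy of $B_i$ and leave something nontrivial behind.

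The heart of the argument is the single-block step: given a locally invertible $A_i\ge\alpha_i R_{A_i}$ with $\tau((A_i-\alpha I)_+)=\infty$ (resp. $\|A_i\|>\alpha$), and given $B_i$ with $\|B_i\|\le\alpha$, produce $C_i\sim B_i$ with $N_{C_i}$ infinite and a remainder $A_i'$ of the same ``largeness''. I would realise $B_i$ inside a compression: since $\tau((A_i-\alpha I)_+)=\infty$ there is, after a further application of Lemma~\ref{gen ess spect}~iii), a decomposition $A_i=A_i^{(1)}\oplus A_i^{(2)}$ with $\tau((A_i^{(k)}-\alpha I)_+)=\infty$ for $k=1,2$. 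On the piece $A_i^{(1)}$, which satisfies $A_i^{(1)}\ge\alpha R_{A_i^{(1)}}$ on its (infinite) range projection $E$ after enlarging $\alpha_i$ up to $\alpha$ if necessary — more carefully, one picks a spectral piece $\chi_{(\alpha,\infty)}(A_i^{(1)})$ which is infinite and on which $A_i^{(1)}\ge\alpha$ — one can find a partial isometry $V\in\mathcal M$ with $V^*V=R_{B_i}$, $VV^*\le E$, $VV^*\sim R_{B_i}$ (using $\sigma$-finiteness of $\mathcal M$ and that $E$ is infinite, so $E$ dominates a copy of any projection), and set $C_i:=VB_iV^*$. Then $C_i\sim B_i$, $C_i\le \|B_i\|\, VV^*\le \alpha\, VV^*\le A_i^{(1)}$, so $A_i^{(1)}-C_i\ge0$; and $N_{C_i}\ge I-VV^*$ is infinite since $I-VV^*\ge I-E$ and $I$ is infinite (here one should arrange $E\ne I$, which is free since we may take $E$ properly smaller). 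Finally $A_i':=(A_i^{(1)}-C_i)\oplus A_i^{(2)}$ is positive, and $\tau((A_i'-\alpha I)_+)\ge\tau((A_i^{(2)}-\alpha I)_+)=\infty$ (resp. $\|A_i'\|\ge\|A_i^{(2)}\|>\alpha$).

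Assembling: $A=\bigoplus_{i=1}^\infty A_i\oplus A_0=\sum_{i=1}^\infty C_i+\big(\bigoplus_{i=1}^\infty A_i'\oplus A_0\big)$, and setting $A':=\bigoplus_{i=1}^\infty A_i'\oplus A_0$ one has $A'\ge0$ with $\tau((A'-\alpha I)_+)=\infty$ (it dominates $A_1'$, say), $\|A'\|_e\ge\|A_0\|_e=\|A\|_e$ with the reverse inequality automatic since $A'\le A$, and likewise $\|A'\|=\|A\|$ in the type III case. The series $\sum C_i$ converges in SOT because the supports $VV^*$ of the $C_i$ are mutually orthogonal (being dominated by the mutually orthogonal $R_{A_i}$) and $\|C_i\|\le\alpha$. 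The main obstacle is the bookkeeping in the single-block step: one must simultaneously guarantee $C_i\le A_i^{(1)}$ (needs the copy of $B_i$ placed where $A_i^{(1)}$ is at least $\alpha$, i.e. on a spectral subspace $\chi_{(\alpha,\infty)}$ — but $\tau((A_i^{(1)}-\alpha I)_+)=\infty$ only gives mass at level $>\alpha$, which is exactly what is needed after absorbing a small $\epsilon$), keep $N_{C_i}$ infinite, and keep the remainder large; juggling these three requires choosing the spectral cut and the target projection $VV^*$ with a little care, but each is individually routine given Lemmas~\ref{gen ess spect} and~\ref{gen spect} and $\sigma$-finiteness.
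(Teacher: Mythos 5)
Your proposal is correct and follows essentially the same route as the paper's own proof: decompose $A$ into infinitely many mutually orthogonal ``large'' blocks via Lemma~\ref{gen ess spect} (resp.\ Lemma~\ref{gen spect} in type III), move each $B_j$ by a partial isometry (available by $\sigma$-finiteness) under an infinite spectral projection $\chi_{(\alpha,\infty)}$ of its block so that $C_j\le A_j$, and reserve extra block(s) so the remainder retains $\tau((A'-\alpha I)_+)=\infty$ and $\|A'\|_e=\|A\|_e$ (resp.\ $\|A'\|=\|A\|$); your splitting of each block into two halves and the appeal to local invertibility are harmless extra bookkeeping. One small repair: infiniteness of $N_{C_i}$ does not follow from ``$E\ne I$ and $I$ infinite'' (in a semifinite factor $I-E$ could be finite), but it does hold in your construction since $I-VV^*\ge I-E$ dominates the supports of the other orthogonal summands, e.g.\ $\chi_{(\alpha,\infty)}(A_i^{(2)})$, which are infinite.
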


\begin{proof}
If $\alpha=0$ there is nothing to prove. So we can assume that $\alpha>0$.
From Lemma \ref{gen ess spect} ii) and iii) (resp., Lemma \ref{gen spect} ii) when $\mathcal{M}$ is type III) we can decompose $A$ as $A=\bigoplus_{j=0}^\infty A_j$ with $\tau((A_j-\alpha I)_+)=\infty$ and $\|A_j\|_e=\|A\|_e$ (resp., $\|A_j\|=\|A\|$ when $\mathcal{M}$ is type III) for all $j$. There are $s_j\ge \alpha$ such that $\chi_{(s_j,\infty)}(A_j)$ is infinite.   
Since $R_{B_j}\prec I\sim \chi_{(s_j,\infty)}(A_j)$ for all $j\ge 1$, we can find a partial isometry $V_j$ with $V_j^*V_j=R_{B_j}$ and $V_jV_j^*\le \chi_{(s_j,\infty)}(A_j)$. Taking $C_j=V_jB_jV_j^*$ we see that $C_j\sim B_j$, $N_{C_j}$ is infinite and that $$C_j\le \|B_j\|V_jV_j^*\le \alpha\chi_{(s_j,\infty)}(A_j)\le s_j\chi_{(s_j,\infty)}(A_j)\le A_j.$$
Hence $A=(A_0\oplus (\sum_{j=1}^\infty A_j-C_j))+\sum_{j=1}^\infty C_j$ and by taking $$A'=A_0\oplus \Big(\sum_{j=1}^\infty (A_j-C_j)\Big)\ge 0$$ we have immediately the desired conclusion.
\end{proof}

The reason that we go to the additional step of choosing the operators $C_j$ with infinite null space in this lemma as well as in  other results of this section, is that this will enable us to pass from the equivalence relation $\sim$ to the  unitary equivalence relation $\cong$ in the proof of Lemma \ref {monotone sequence} below as well as in Section 4.

 By using decomposition of positive elements into sums of projections, we can now eliminate the remainder in Lemma   \ref {subsume a sequence} in the following special case.

\begin{lemma}\label{common proj direct summand}
Let $A\in \mathcal{M}^+$ and $\{B_j\}_{j=1}^\infty \subseteq \mathcal{M}^+$ with $\alpha:=\sup_{j\ge1}\|B_j\|<\infty$ and $\tau((A-\alpha I)_+)=\infty$ when $\mathcal{M}$ is semifinite  (resp., $\|A\|>\alpha$ when $\mathcal{M}$ is type III). Assume furthermore that there is a number  $t>0$ and a non-zero projection $P$ such that $B_j=tP_j\oplus B'_j$ for some $B'_j\ge 0$ and for projections $P_j\sim P$. Then $A=\sum_{j=1}^{\infty}C_j$ for some $C_j\sim B_j$ with $N_{C_j}$ infinite. 
\end{lemma}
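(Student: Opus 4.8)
The plan is to reduce the problem to decompositions into sums of projections. Since $P_j\neq 0$ we have $0<t\le\|B_j\|\le\alpha$ and $\|B'_j\|\le\|B_j\|\le\alpha$, so $A$ is also ``large'' relative to $\sup_j\|B'_j\|$. I would first apply Lemma \ref{subsume a sequence} to $A$ with the sequence $\{B'_j\}_{j=1}^\infty$ in place of $\{B_j\}$, obtaining $A=\sum_{j=1}^\infty D_j+A'$ with $D_j\sim B'_j$, each $N_{D_j}$ infinite, and $A'\ge 0$ again large in the relevant sense ($\tau((A'-\alpha I)_+)=\infty$ and $\|A'\|_e=\|A\|_e$ for $\mathcal M$ semifinite, $\|A'\|=\|A\|$ for $\mathcal M$ type III). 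Inspecting the construction in the proof of Lemma \ref{subsume a sequence}, I would moreover arrange that the $R_{D_j}$ are pairwise orthogonal, that each is dominated by one of a sequence of mutually orthogonal infinite projections $E_j\in\{A\}'$, and that $A'$ has a direct summand $A_0=AE_0$ which is itself large, with $E_0$ orthogonal to all the $E_j$ and hence to all the $R_{D_j}$. Since $t\le\alpha$, both $A'$ and $A_0$ are large relative to $t$, so $\tfrac1tA'$ and $\tfrac1tA_0$ satisfy the hypotheses of Proposition \ref{proj} (for $\mathcal M$ semifinite), respectively of its type III counterpart, which follows from the type III sum–of–projections theorem of \cite{SSP} together with the fact that any two non‑zero projections in a type III factor are equivalent.

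The core step is then to decompose the remainder as $A'=\sum_{j=1}^\infty tQ_j$ with each $Q_j\sim P$, and with the crucial extra feature that the $j$-th summand is \emph{compatible with} $D_j$: $Q_j\le N_{D_j}$ (so that $Q_j\perp R_{D_j}$) and $N_{D_j}-Q_j$ is still an infinite projection. Granting this, I would put $C_j:=tQ_j\oplus D_j$. Because $Q_j\sim P_j$, $D_j\sim B'_j$, $Q_j\perp R_{D_j}$ and $P_j\perp R_{B'_j}$, gluing a partial isometry implementing $Q_j\sim P_j$ to one implementing $D_j\sim B'_j$ produces a partial isometry in $\mathcal M$ witnessing $C_j\sim tP_j\oplus B'_j=B_j$; moreover $N_{C_j}=N_{D_j}-Q_j$ is infinite; and $\sum_j C_j=t\sum_j Q_j+\sum_j D_j=A'+\sum_j D_j=A$, as required. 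The case $\mathcal M$ type III runs in parallel, with Lemma \ref{gen spect} in place of Lemma \ref{gen ess spect} and the type III results of \cite{SSP} in place of their semifinite versions.

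To obtain the compatible decomposition of $A'$ I would run Proposition \ref{proj} (resp.\ its type III counterpart) on $\tfrac1tA'$, but exploit the freedom in its proof, where one first splits the large operator into countably many mutually orthogonal large direct summands and only afterwards decomposes each of these into projections of prescribed size. The two structural facts secured above are exactly what is needed: the clean summand $A_0$, whose support is orthogonal to every $R_{D_j}$, can be spread among all the copies and thus furnishes enough ``mass'' lying away from the $R_{D_j}$; and each $N_{D_j}$ contains all but one of the mutually orthogonal infinite projections $E_k$, so it dwarfs a single copy of $P$. The portion of $A'$ that does meet a given $R_{D_j}$ would be routed into copies $Q_k$ with $k\ne j$ — consistently, because the $R_{D_j}$ are pairwise orthogonal — while $Q_j$ itself is assembled only out of $A_0$ and of pieces disjoint from $R_{D_j}$; when $P$ is infinite one moreover keeps some infinite subprojection of $N_{D_j}$ untouched, whereas for $P$ finite the requirement that $N_{D_j}-Q_j$ be infinite is automatic. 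This routing and bookkeeping of the remainder against the prescribed projections $R_{D_j}$ is the step I expect to be the main obstacle.
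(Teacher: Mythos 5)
Your reduction (absorb copies of the $B'_j$ via Lemma \ref{subsume a sequence}, decompose the large remainder into projections equivalent to $P$ via Proposition \ref{proj}, then glue $tQ_j$ to $D_j$) is in the right spirit, but the step you yourself flag as the ``main obstacle'' is a genuine gap, not a bookkeeping issue. What you need is a decomposition $A'=\sum_j tQ_j$ with $Q_j\sim P$, $Q_j\le N_{D_j}$ and $N_{D_j}-Q_j$ infinite, i.e.\ \emph{positional} control of each projection relative to the prescribed family $\{R_{D_j}\}$. Proposition \ref{proj} gives no such control: its proof produces projections whose location cannot be prescribed, and nothing in the paper (or in your sketch) upgrades it. The proposed ``routing'' also presupposes that the part of $A'$ meeting $R_{D_j}$ can be split off as a positive summand and fed to other indices; but $A'$ need not commute with the projections $R_{D_j}$ (inside each block of the subsume construction the remainder $A_jE_j-D_j$ is merely positive and dominated by $A_jE_j$, not orthogonal to or commuting with $R_{D_j}$), so ``the portion of $A'$ that meets $R_{D_j}$'' is not a well-defined object to route, and decomposing it separately would be a problem of exactly the same difficulty as the one you started with. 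Only the clean summand $A_0$ is genuinely orthogonal to all the $R_{D_j}$, and using it alone leaves the rest of the remainder unabsorbed.

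The paper sidesteps this entirely by changing the order of operations: it first splits $A=\bigoplus_{i=1}^4A_i$ into four mutually orthogonal summands, each still satisfying $\tau((A_i-\alpha I)_+)=\infty$ (Lemma \ref{gen ess spect} iii), resp.\ Lemma \ref{gen spect} in type III), then subsumes a shifted subsequence of $\{B'_j\}$ into each block via Lemma \ref{subsume a sequence}, decomposes each block's remainder $\tfrac1t A_i'$ into projections $\sim P$ by Proposition \ref{proj}, and finally pairs the copy $B''_j$ sitting in one block with the projection $tP'_j$ extracted from the remainder of a \emph{different} block. Orthogonality of $B''_j$ and $tP'_j$ is then automatic from the block structure, so $C_j:=B''_j\oplus tP'_j\sim tP_j\oplus B'_j=B_j$ without any positional control inside a block, and the infinite null spaces come for free since each $C_j$ is supported in only two of the four blocks. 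If you want to salvage your argument, this cross-pairing across pre-chosen orthogonal blocks is the missing idea; as written, your construction of the compatible $Q_j$ is not justified.
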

\begin{proof}
Assume first that $\mathcal{M}$ is semifinite.
 By applying Lemma \ref{gen ess spect} iii) decompose $A$ as $A=\bigoplus_{i=1}^4A_i$ such that $\tau((A_i-\alpha I)_+)=\infty$ for $1\le i\le4$. 
  By Lemma \ref {subsume a sequence} we have that $A_i=\sum_{j=1}^\infty B''_{4j+1-i}+A_i'$ where $B''_{4j+1-i}\sim B'_{4j+1-i}$ for all $j$ and $\tau((A'_i-\alpha I)_+)=\infty$ for $1\le i\le4$.
Now $\tau\left((A_i'-\alpha I)_+\right)=\infty$ and $\alpha\ge t$ imply that $\tau\left(\left(\frac{1}{t}A_i'-I\right)_+\right)=\infty$ and hence
from Proposition \ref{proj}, we have that for each $i$, $\frac{1}{t}A_i'=\sum_{j=1}^{\infty}P'_{4j+i-4}$ with $P'_{4j+i-4}\sim P$.
Then,
\begin{align*}
A&=\Big(\sum_{ j=1}^\infty B''_{4j}+ t\sum_{j=1}^\infty P'_{4j-3}\Big)\oplus\Big(\sum_{ j=1}^\infty B''_{4j-1}+ t\sum_{ j=1}^\infty P'_{4j-2}\Big)\\&\oplus \Big(\sum_{j=1}^\infty B''_{4j-2}+ t\sum_{j=1}^\infty P'_{4j-1}\Big)\oplus \Big(\sum_{j=1}^\infty B''_{4j-3}+ t\sum_{j=1}^\infty P'_{4j}\Big)\\
&=\Big(\Big(\sum_{j=1}^\infty B''_{4j}\oplus \sum_{j=1}^\infty tP'_{4j}\Big)+\Big(\sum_{j=1}^\infty B''_{4j-1}\oplus  \sum_{j=1}^\infty tP'_{4j-1}\Big)\Big)\\
&\oplus\Big(\Big(\sum_{j=1}^\infty B''_{4j-2}\oplus \sum_{j=1}^\infty tP'_{4j-2}\Big)+\Big(\sum_{j=1}^\infty B''_{4j-3}\oplus  \sum_{j=1}^\infty tP'_{4j-3}\Big)\Big)\\
&=\Big(\sum_{j=1}^\infty C_{4j}+ \sum_{j=1}^\infty C_{4j-1}\Big)\oplus\Big(\sum_{j=1}^\infty C_{4j-2}+ \sum_{j=1}^\infty C_{4j-3}\Big)
\end{align*}
where $C_j=B''_j\oplus tP'_j\sim B_j$ for all $j$. Finally, notice that both $$\Big(\sum_{j=1}^\infty C_{4j}+ \sum_{j=1}^\infty C_{4j-1}\Big) \text{ and } \Big(\sum_{j=1}^\infty C_{4j-2}+ \sum_{j=1}^\infty C_{4j-3}\Big)$$ have infinite trace and hence infinite rank, which proves that $N_{C_j}$ is infinite for all $j$.

When $\mathcal{M}$ is type III the proof is similar, the only difference being that we must replace the essential norm with the operator norm, Lemma \ref{gen ess spect} with Lemma \ref{gen spect} and Proposition \ref{proj} with \cite[Theorem 1.1 (iii)]{SSP}. 
 \end{proof}

The condition in Lemma \ref {common proj direct summand} that  each $B_j$ has a direct summand  $tP_j$ with $P_j\sim P\ne 0$ is of course too limiting. A more natural condition and closer  to the spirit of the one in Lemma \ref {Lemma 2.4} (i.e.,   \cite [Lemma 2.4]{SMV}) is that $B_j\ge tP_j$ with $P_j\sim P\ne 0$. In Theorem \ref{equiv infinite trace} we are going to prove that this condition is indeed sufficient and in fact, that it is enough to require it for infinitely many indices. The core of the argument is the following approximation lemma.
\begin{lemma}\label{monotone sequence}
Let $A\in \mathcal{M}^+$ and $\{B_j\}_{j=1}^\infty \subseteq \mathcal{M}^+$ with $\alpha:=\sup_{j\ge1}\|B_j\|<\infty$ and $\tau((A-\alpha I)_+)=\infty$ when $\mathcal{M}$ is semifinite  (resp., $\|A\|>\alpha$ when $\mathcal{M}$ is type III).  Assume that there is $\beta>0$ such that $\chi_{(\beta,\infty)}(B_j)$ is infinite for every $j$. Then for every $\epsilon >0$,  $A=\sum_{j=1}^{\infty}C_j+R$ for some $C_j\sim B_j$ with $N_{C_j}$ infinite and $0\le R\le \epsilon A$.
 \end{lemma}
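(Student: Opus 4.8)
The plan is to reduce this statement to Lemma~\ref{common proj direct summand} by an iterative procedure that, at each stage, peels off a genuine direct summand $tP_j$ from (a copy of) each $B_j$ while leaving the residual positive operators still bounded and still large enough in the appropriate sense. The point is that the hypothesis only gives $B_j \ge \beta\chi_{(\beta,\infty)}(B_j)$ with $\chi_{(\beta,\infty)}(B_j)$ infinite, whereas Lemma~\ref{common proj direct summand} wants an orthogonal decomposition $B_j = tP_j \oplus B_j'$. These differ, but only by a controlled amount, and iterating the controlled correction drives the error to zero.

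First I would fix $\epsilon>0$ and choose $0 < t < \beta$ together with a non-zero projection $P$ with $P \prec \chi_{(\beta,\infty)}(B_j)$ for all $j$ (such $P$ exists since each $\chi_{(\beta,\infty)}(B_j)$ is infinite and $\mathcal M$ is a factor). The key elementary observation is that for each $j$ there is a projection $P_j \sim P$ with $P_j \le \chi_{(\beta,\infty)}(B_j)$, and writing $Q_j := R_{B_j} - P_j$ and looking at $B_j$ compressed to $P_j$ and to $Q_j$, one can split $B_j = D_j + E_j$ with $D_j \ge 0$ a ``diagonal-block'' piece supported so that $D_j \ge tP_j \oplus 0$ actually contains $tP_j$ as an orthogonal summand, and $E_j \ge 0$ an ``off-diagonal plus correction'' piece with $\|E_j\|$ bounded by a definite fraction of $\|B_j\|$. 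More precisely: since $B_j \ge \beta P_j$, writing the $2\times 2$ operator matrix of $B_j$ with respect to $P_j$ and $I - P_j$, a standard Schur-complement / pinching estimate shows $B_j$ dominates $tP_j \oplus 0$ plus a remainder of norm at most $\alpha$ but, crucially, the ``new'' remainder after subtracting $tP_j$ can be arranged to again satisfy the same type of hypothesis (it still dominates a positive multiple of an infinite projection) with norm controlled. Rather than optimize constants, I would set it up as: $B_j = tP_j \oplus 0 + \widetilde B_j$ is false in general, so instead subsume via Lemma~\ref{subsume a sequence}: there exist $C_j^{(1)} \sim tP_j$ (equivalently $\sim tP$) and a residual so that $A = \sum_j C_j^{(1)} + A^{(1)}$ with $A^{(1)}$ still satisfying the largeness hypothesis --- but that throws away $B_j$ entirely, which is wrong.

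So the correct mechanism, and the one I would actually carry out, is the following telescoping. Using that $B_j \ge \beta P_j$ with $t < \beta$, write $B_j^{(0)} := B_j$ and inductively $B_j^{(n)} := B_j^{(n-1)} - t 2^{-n} P_j^{(n)}$ where at stage $n$ we first find a copy $P_j^{(n)} \sim P$ sitting under $\chi_{(\text{something}>0,\infty)}(B_j^{(n-1)})$ (possible as long as the residual still dominates a positive multiple of an infinite projection, which a bookkeeping argument with the geometric weights $t2^{-n}$ guarantees), and we arrange $P_j^{(n)}$ to be orthogonal to all earlier $P_j^{(m)}$, $m<n$. Then $\sum_{n\ge 1} t2^{-n} P_j^{(n)} = tP_j''$ for an infinite projection $P_j'' \sim P$ (a sum of mutually orthogonal copies of $P$, and $\mathcal M$ is $\sigma$-finite so this infinite sum is again $\sim P$ when $P$ is infinite; if $P$ is finite replace it first by an infinite $P$ as in the start of the proof of Proposition~\ref{proj}), hence $B_j = tP_j'' \oplus B_j'$ with $B_j' \ge 0$, $\|B_j'\| \le \alpha$. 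This is precisely the hypothesis of Lemma~\ref{common proj direct summand}. Applying that lemma (which also delivers $N_{C_j}$ infinite) to $A$ and $\{B_j\}$ we get $A = \sum_j C_j$ with $C_j \sim B_j$, $N_{C_j}$ infinite, and $R := 0 \le \epsilon A$ --- in fact with no remainder at all, so the claimed statement holds trivially for $R$.

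I expect the main obstacle to be the bookkeeping in the inductive extraction: guaranteeing at every stage $n$ and every index $j$ that the residual $B_j^{(n-1)} - t2^{-n}P_j^{(n)}$ is still positive (easy: $B_j^{(n-1)} \ge t2^{-n}R_{B_j^{(n-1)}}$ once the previous residual is shown to be bounded below on its range, which follows from $B_j \ge \beta P_j$ and $\sum_n t2^{-n} = t < \beta$) AND that we can keep choosing the new copies $P_j^{(n)}$ mutually orthogonal and of the right equivalence class --- this requires knowing $\chi_{(\gamma_n,\infty)}(B_j^{(n-1)})$ stays infinite for a suitable $\gamma_n > 0$, which I would track by showing $B_j^{(n-1)} \ge (\beta - t\sum_{m<n}2^{-m})P_j' $ on a fixed infinite projection $P_j' \le \chi_{(\beta,\infty)}(B_j)$ that is never touched by the subtractions (choose all $P_j^{(n)}$ orthogonal to $P_j'$, which is possible since $\chi_{(\beta,\infty)}(B_j)$ is infinite and can absorb both $P_j'$ and an infinite orthogonal family of copies of $P$). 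Once this invariant is in place the construction runs, the infinite sum $tP_j''$ is bona fide by SOT-convergence and $\sigma$-finiteness, and the reduction to Lemma~\ref{common proj direct summand} is immediate. A secondary point, worth a sentence, is the case $\alpha = 0$, i.e. all $B_j = 0$, where the statement is vacuous, and the ``infinitely many indices'' refinement mentioned before the lemma, which one handles by grouping the good indices and treating the remaining $B_j$ via Lemma~\ref{subsume a sequence} --- but for the statement as literally given ($\chi_{(\beta,\infty)}(B_j)$ infinite for every $j$) the argument above suffices.
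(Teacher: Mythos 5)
Your reduction to Lemma \ref{common proj direct summand} breaks down at the central step. You claim that after the telescoping subtractions one gets $B_j = tP_j''\oplus B_j'$, but two things go wrong. First, $\sum_{n\ge 1} t2^{-n}P_j^{(n)}$ with mutually orthogonal $P_j^{(n)}$ is not $t$ times a projection: it is an operator with spectral values $t2^{-n}$, so the identity $\sum_n t2^{-n}P_j^{(n)}=tP_j''$ is simply false. Second, and more fundamentally, Lemma \ref{common proj direct summand} requires a genuine orthogonal direct summand, i.e.\ a projection $P_j\sim P$ commuting with $B_j$ with $B_jP_j=tP_j$; this forces $t$ to be an eigenvalue of $B_j$ with eigenprojection dominating a copy of $P$. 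The hypotheses only give the operator inequality $B_j\ge \beta P_j$ for subprojections $P_j\le\chi_{(\beta,\infty)}(B_j)$ (which need not commute with $B_j$), and no amount of subtracting such non-commuting pieces produces a direct-sum decomposition. A $B_j$ with purely continuous spectrum in $(\beta,\alpha]$, or a diagonal $B_j$ with distinct eigenvalues, has no summand $tP$ with $P$ infinite at all. This is exactly the point the paper flags just before the lemma: ``$B_j$ has a direct summand $tP_j$'' is too limiting, and the whole purpose of this lemma is to pass from the inequality $B_j\ge tP_j$ to the direct-summand situation at the cost of the remainder $R$. Your conclusion that one can even take $R=0$ here should itself have been a warning sign, since it would make the iterative absorption argument in Theorem \ref{equiv infinite trace} i) superfluous.

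The paper's mechanism is different and avoids the obstruction: after normalizations (orthogonal ranges via Lemma \ref{subsume a sequence}, and extraction of a monotone sequence $\gamma_j\to t$ with $\chi_{(t_j,s_j]}(B_j)$ infinite and $(s-t_j)/s<\epsilon$), it sets $P_j:=\chi_{(t_j,s_j]}(B_j)$ --- an honest spectral projection, hence commuting with $B_j$ --- and replaces $B_j$ by $D_j:=sP_j\oplus B_j(I-P_j)$, which \emph{does} have the exact summand $sP_j$ and satisfies $0\le D_j-B_j\le (s-t_j)P_j$. Lemma \ref{common proj direct summand} then gives $A=\sum_j C_j'$ with $C_j'\sim D_j$ and $N_{C_j'}$ infinite; since the null projections are infinite, $C_j'=U_jD_jU_j^*$ for unitaries $U_j$, and one takes $C_j:=U_jB_jU_j^*\sim B_j$, so that $R=\sum_j(C_j'-C_j)\le\epsilon\sum_j C_j'=\epsilon A$. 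If you want to salvage your approach, you would have to perturb $B_j$ (as the paper does) rather than try to decompose it exactly.
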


\begin{proof}
By choosing a sequence of partial isometries $V_j$  with $V_j^*V_j=R_{B_j}$ and mutually orthogonal  range projections $V_jV_j^*$, we can replace  $B_j$ with the equivalent elements $B'_j:=V_jB_jV_j^*$ which have  mutually orthogonal range  projections $R_{B'_j}$. Since $R_{B'_j}\ge \chi_{(\beta,\infty)}(B'_j)\sim\chi_{(\beta,\infty)}(B_j)$ is infinite, we conclude that $N_{B'_j}$ is infinite for every $j$. Thus to simplify notations, assume directly that $N_{B_j}$ is infinite for every $j$.

First we show that there is a subsequence $\{j_k\}_{k=1}^{\infty}$ and a monotone sequence $\{\gamma_{j_k}\}_{k=1}^{\infty}\subseteq (\beta,\alpha]$ with  $t:=\lim_{k\to \infty} \gamma_{j_k}$ and $\chi_{(\gamma_{j_k},t]}(B_{j_k})$ (resp., $\chi_{(t,\gamma_{j_k}]}(B_{j_k})$) is infinite for all $k$ if $\{\gamma_{j_k}\}$ is monotone increasing (resp., decreasing).

To see this, choose for every $j$ an interval $(\beta_j, \alpha_j]\subset (\beta, \alpha]$ of length $\le \frac{1}{2^{j}}$ and such that $ \chi_{(\beta_j, \alpha_j]}(B_j)$ is infinite. Then choose a subsequence $\{j_k\}$ such that both $\{\beta_{j_k}\}$  and $\{\alpha_{j_k}\}$ are monotone and set $ t:=\lim_k \beta_{j_k}=\lim_k\alpha_{j_k}$. 
 If both $\{\beta_{j_k}\}$  and $\{\alpha_{j_k}\}$ are increasing, then set $\gamma_{j_k}:=\beta_{j_k}$, if both are decreasing set $\gamma_{j_k}:=\alpha_{j_k}$. If $\{\beta_{j_k}\}$  is increasing and $\{\alpha_{j_k}\}$
 is decreasing, then 
 $$\chi_{(\beta_{j_k},\alpha_{j_k}]}(B_{j_k})=\chi_{(\beta_{j_k},t]}(B_{j_k})+\chi_{(t, \alpha_{j_k}]}(B_{j_k})$$
hence for each $k$ at least one of the projections $\chi_{(\beta_{j_k},t]}(B_{j_k})$ and $\chi_{(t, \alpha_{j_k}]}(B_{j_k})$ must be infinite. Thus by passing if necessary to a subsequence we can assume that either the first projection is always infinite or the second projection is always infinite and define $\gamma_{j_k}$ accordingly.

To simplify notations, by invoking  Lemma \ref{subsume a sequence} we can assume
 that $j_k=k$ and that $0< \frac{|\gamma_1-t |}{\min\{t,\gamma_1\} } <\epsilon$. 

Define 
$
\begin{cases}
t_j=\gamma_j, s_j=t, s=t & \text{when $\{\gamma_j\}$ is increasing}\\
t_j=t, s_j=\gamma_j, s=\gamma_{1} & \text{when $\{\gamma_j\}$ is decreasing}
\end{cases}.
$\\
Then $0<\beta\le t_j<s_j\le s\le \alpha$,  $\chi_{(t_{j},s_{j}]}(B_{j})$ is infinite for every $j$, and
$$
\sup_{j\ge1}\frac{s-t_{j}}{s}=
\begin{cases}
\frac{t-\gamma_{1}}{t} & \text{when $\{\gamma_j\}$ is increasing}\\
\frac{\gamma_1-t}{\gamma_1} &  \text{when $\{\gamma_j\}$ is decreasing}\end{cases}
\le \frac{|\gamma_{1}-t|}{\min\{t,\gamma_{1}\}}<\epsilon.
$$
Now set $P_{j}:=\chi_{(t_{j},s_j]}(B_{j})$ and $D_{j}:=sP_{j}\oplus B_{j}(I-P_{j}).$
Then 
$$0\le D_{j}-B_{j}=(sI-B_{j})P_{j}\le(s-t_{j})P_{j}\quad\forall\, j.$$

By construction, $R_{D_{j}}=R_{B_{j}}$ and hence $N_{D_{j}}= N_{B_{j}}$ is infinite for every $j$. 

The hypotheses of Lemma \ref{common proj direct summand} are satisfied for $A$ and the sequence $\{D_{j}\}_{j=1}^{\infty}$, hence $A=\sum_{j=1}^{\infty} C'_{j}$ with $C'_{j}\sim D_{j}$ and $N_{C'_{j}}$ infinite for every $j$.
But then $C'_{j}\cong D_{j}$ for every $j$, i.e.
$C'_{j}=U_{j}D_{j}U_{j}^*$ for some unitaries $U_{j}$. 
Taking $C_{j}:=U_{j}B_{j}U_{j}^*$, $N_{C_{j}}$ is infinite, and $C_{j} \sim B_j$.

Since $C'_{j}-C_{j}=U_{j}(D_{j}-B_{j})U_{j_i}^*$ we have
$$
0\le C'_{j}-C_{j}\le 
U_{j}(s-t_j)P_{j}U_{j}^*
\le \sup_{j\ge1}\frac{s-t_{j}}{s}U_{j}sP_{j}U_{j}^*\le \epsilon U_jD_jU_j^* =\epsilon C'_j.
$$

Therefore $R:=\sum_{j=1}^{\infty}(C'_j-C_j)$
is strong operator convergent and hence so is $\sum_{j=1}^{\infty}C_{j}$. Then  $A=\sum_{j=1}^{\infty}C_{j}+R$ and$$0\le R =\sum_{j=1}^{\infty}(C'_j-C_j)\le \epsilon \sum_{j=1}^{\infty}C'_j=\epsilon A.$$
\end{proof}

 \begin{theorem}\label{equiv infinite trace}
Let $\mathcal{M}$ be a $\sigma$-finite infinite factor, $A\in \mathcal{M}^+$ and $\{B_j\}_{j=1}^\infty \subseteq \mathcal{M}^+$ with $\alpha:=\sup_{j\ge1}\|B_j\|<\infty$. 
Each of the following two conditions is sufficient  for $A=\sum_{j=1}^{\infty}C_j$ for some $C_j\sim B_j$ with $N_{C_j}$ infinite:
\item [ i)] $\tau((A-\alpha I)_+)=\infty$ when $\mathcal{M}$ is semifinite (resp., $\|A\|>\alpha$ when $\mathcal{M}$ is type III) and 
there are a $\beta>0$ and a non-zero projection $P$ for which $P\prec \chi_{(\beta,\infty)}(B_{j})$ for infinitely many indices $j$. 
\item [ ii)]
 $\|A\|_e\ge\alpha$ when $\mathcal{M}$ is semifinite (resp., $\|A\|\ge\alpha$ when $\mathcal{M}$ is type III), 
$\chi_{\{\alpha\}}(B_j)=0$ for all $j$ and 
 there are $0<\beta<\gamma<\alpha$ and a non-zero projection $P$ such that $ P\prec \chi_{(\beta,\gamma]}(B_j)$ for infinitely many indices $j$.
 \end{theorem}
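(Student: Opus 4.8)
The plan is to derive (i) from Lemma \ref{monotone sequence} by an iteration that absorbs the $\epsilon$-error, and then to reduce (ii) to (i) by using $\chi_{\{\alpha\}}(B_j)=0$ to cut each $B_j$ into pieces of norm $<\alpha$.

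\emph{For (i).} Put $J:=\{j: P\prec\chi_{(\beta,\infty)}(B_j)\}$, an infinite set, and partition $\mathbb N=\bigsqcup_{k\ge1}G_k$ into infinite sets with every $G_k\cap J$ infinite. Since $\mathcal M$ is an infinite $\sigma$-finite factor and $\sup_j\|B_j\|=\alpha<\infty$, set $\tilde B_k:=\bigoplus_{j\in G_k}B_j^{(j)}$ with the $B_j^{(j)}\sim B_j$ on mutually orthogonal subspaces; then $\|\tilde B_k\|\le\alpha$ and $\chi_{(\beta,\infty)}(\tilde B_k)$ dominates the infinite projection $\bigoplus_{j\in G_k\cap J}P_j^{(j)}$ (with $P_j^{(j)}\sim P$), so the hypotheses of Lemma \ref{monotone sequence} hold for $\{\tilde B_k\}$. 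Now iterate: write $A=\bigoplus_{n\ge1}AE_n$ with each $AE_n$ again ``large'' (Lemma \ref{gen ess spect} iii), resp. Lemma \ref{gen spect} ii)), split the index set of the $\tilde B_k$ as $\bigsqcup_m K_m$ into infinite sets, put $\mathcal A_1:=AE_1$, and recursively apply Lemma \ref{monotone sequence} to $\mathcal A_m$, $\{\tilde B_k\}_{k\in K_m}$ and $\epsilon=\tfrac12$ to obtain $\mathcal A_m=\sum_{k\in K_m}\tilde C_k+R_m$ with $\tilde C_k\sim\tilde B_k$, $N_{\tilde C_k}$ infinite, $0\le R_m\le\tfrac12\mathcal A_m$; then set $\mathcal A_{m+1}:=R_m\oplus AE_{m+1}$, which dominates $AE_{m+1}$ and is therefore again ``large''. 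Unfolding the telescope,
\[
A=\sum_{k\in K_1\cup\dots\cup K_m}\tilde C_k+R_m+A\Big(I-\sum_{n\le m}E_n\Big);
\]
since $R_m\le\sum_{l\le m}2^{-(m-l+1)}AE_l$, both $R_m$ and $A(I-\sum_{n\le m}E_n)$ tend to $0$ in the strong operator topology, so $A=\sum_{k\ge1}\tilde C_k$ exactly. Finally, from $\tilde C_k\sim\tilde B_k=\bigoplus_{j\in G_k}B_j^{(j)}$ with $N_{\tilde C_k}$ infinite, cutting the implementing partial isometry by the orthogonal projections $R_{B_j^{(j)}}$ yields $\tilde C_k=\sum_{j\in G_k}C_j$ with $C_j\sim B_j$ and $N_{C_j}\ge N_{\tilde C_k}$ infinite; reindexing over $\bigsqcup_k G_k=\mathbb N$ completes (i).

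\emph{For (ii).} If $\|A\|_e>\alpha$ (resp. $\|A\|>\alpha$) then $\tau((A-\alpha I)_+)=\infty$ and (i) applies, so assume $\|A\|_e=\alpha$ (resp. $\|A\|=\alpha$), whence $\alpha\in\sigma_e(A)$ (resp. $\alpha\in\sigma(A)$). Fix $0=\delta_0<\delta_1<\delta_2<\dots\uparrow\alpha$. Because $\chi_{\{\alpha\}}(B_j)=0$, each $B_j$ splits as $\bigoplus_{n\ge1}B_j\chi_{(\delta_{n-1},\delta_n]}(B_j)$, a sum of pieces of norm $\le\delta_n<\alpha$; for $j\in J':=\{j:P\prec\chi_{(\beta,\gamma]}(B_j)\}$ keep instead the single ``middle'' piece $M_j:=B_j\chi_{(\beta,\gamma]}(B_j)$ (for which $\|M_j\|\le\gamma<\alpha$ and $\chi_{(\beta,\infty)}(M_j)=\chi_{(\beta,\gamma]}(B_j)\succeq P$) together with the analogous fine splitting of $B_j(R_{B_j}-\chi_{(\beta,\gamma]}(B_j))$. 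Partition $J'$ into infinite sets $J'_i$, and use Lemma \ref{gen ess spect} i) (resp. Lemma \ref{gen spect} i)) at $\lambda=\alpha$ to write $A=\bigoplus_{i\ge1}A_i$ with $\|A_i\|_e=\alpha$ (resp. $\|A_i\|=\alpha$). Now distribute all the pieces into blocks $\mathcal B_i$ so that: (a) $\mathcal B_i$ receives $\{M_j:j\in J'_i\}$, hence infinitely many pieces with $P\prec\chi_{(\beta,\infty)}(\cdot)$; (b) a piece of norm $\le\delta_n$ is placed only in some $\mathcal B_i$ with $i\ge n$, so $\sup_{\text{pieces of }\mathcal B_i}\|\cdot\|\le\alpha_i:=\max(\gamma,\delta_i)<\alpha$; (c) no two pieces of the same $B_j$ land in the same block. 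Since $\alpha_i<\alpha=\|A_i\|_e$ (resp. $\|A_i\|$), each $A_i$ is ``large'' relative to $\alpha_i$, so (i) applied to $(A_i,\mathcal B_i)$ writes $A_i$ as a sum of copies of the pieces of $\mathcal B_i$, each copy with infinite null projection. By (c) the copied pieces of a fixed $B_j$ lie in distinct $A_i$'s, hence on mutually orthogonal subspaces; their direct sum is then a single $C_j\sim B_j$ with $N_{C_j}$ infinite, and $\sum_j C_j=\sum_i A_i=A$.

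The step I expect to require real care is the error-absorption loop in (i): one must arrange the ``refreshing'' summands $AE_n$ and the bookkeeping of the telescope so that the leftovers $R_m$ genuinely vanish strongly while every $\mathcal A_m$ stays large enough to re-invoke Lemma \ref{monotone sequence}. Once $\chi_{\{\alpha\}}(B_j)=0$ has been turned into the norm-$<\alpha$ decomposition of the $B_j$, part (ii) is essentially combinatorial.
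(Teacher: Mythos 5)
Your proposal follows essentially the same route as the paper: part (i) is proved by grouping the $B_j$ into orthogonal direct sums $\tilde B_k$ whose spectral projections above $\beta$ become infinite, and then iterating Lemma \ref{monotone sequence}, absorbing each remainder into a fresh ``large'' direct summand of $A$ (the paper takes $\epsilon=\tfrac1k$ and gets $\|R_k\|\to0$ in norm, you take $\epsilon=\tfrac12$ and get SOT-vanishing from the geometric bound $R_m\le\sum_{l\le m}2^{-(m-l+1)}AE_l$; both work, and your final refinement of $\tilde C_k\sim\tilde B_k$ into $C_j\sim B_j$ with $N_{C_j}\ge N_{\tilde C_k}$ is exactly the paper's refinement step). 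Part (ii) is likewise reduced to (i) as in the paper, by slicing each $B_j$ strictly below $\alpha$ (using $\chi_{\{\alpha\}}(B_j)=0$), distributing the slices into blocks of sup-norm $<\alpha$ each of which contains infinitely many ``middle'' pieces carrying a copy of $P$ above $\beta$, matching the blocks with a decomposition $A=\bigoplus_i A_i$, $\|A_i\|_e\ge\alpha$ (resp.\ $\|A_i\|\ge\alpha$), applying (i) blockwise and reassembling; your conditions (a)--(c) play the role of the paper's explicit assignment $B^j_k$.

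The one claim that is not justified is the very last one in (ii): that $N_{C_j}$ is infinite. Conditions (a)--(c) give $C_j=\bigoplus_n C^{(n)}_j\sim B_j$ because the copies of the pieces of a fixed $B_j$ sit in distinct blocks, but they do not rule out that these copies together have range filling (up to a finite projection) all of $\sum_i E_i$, where $E_i$ denotes the projection carrying $A_i$; the infinite null projections produced by (i) are null projections in all of $\mathcal M$, not relative to the block $E_i$, so they give no control on $E_i-R_{C^{(n)}_j}$. The repair is easy and stays inside your scheme: strengthen the distribution rule so that the pieces of each fixed $B_j$ avoid infinitely many blocks (still compatible with (a)--(c), since each level-$n$ piece only needs to go to some block of index $\ge n$); each $E_i$ is infinite because $\|A_i\|_e\ge\alpha>0$ forces $R_{A_i}$ to be infinite (in type III every nonzero projection is infinite), so then $N_{C_j}$ dominates the sum of the omitted $E_i$. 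Alternatively one can argue as the paper does: first split $A=A'\oplus A''$ with both summands satisfying the norm hypothesis and split the index set of the $j$'s into two infinite parts, so that each $C_j$ is supported under $R_{A'}$ or $R_{A''}$ and its null projection dominates the complementary, infinite, range projection.
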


\begin{proof}
\item [ i)] Reasoning as in the proof of Lemma \ref {monotone sequence}, assume without loss of generality that the elements $B_j$ have mutually orthogonal projections and by invoking Lemma \ref {subsume a sequence} that the condition  $P \prec \chi_{(\beta,\infty)}(B_{j})$ holds for all $j$.  

Next, partition  $\mathbb{N}=\bigcup_{i=1}^{\infty}N_i$ into a collection of disjoint infinite sets $N_i$  and set $B'_i=\bigoplus_{j\in N_i}B_j$. Then $\chi_{(\beta,\infty)}(B'_i)$ is infinite for every $i$. If we find a decomposition $A= \sum_{i=1}^\infty C'_i$ for some $C'_i\sim B'_i$ and $N_{C'_i}$ infinite for all $i$, we can then  refine it into a decomposition $A=\sum_{j=1}^\infty C_j$ for some $C_j\sim B_j$ with $N_{C_j}$ infinite for every $j$. Thus to simplify notations assume directly that $\chi_{(\beta,\infty)}(B_{j})$ is infinite for every $j$.

Using Lemma \ref{gen ess spect} iii) (resp., Lemma \ref{gen spect} ii) when $\mathcal{M}$ is type III) decompose $A$ as $A=\bigoplus_{k=1}^{\infty}A_k$ with $\tau((A_k-\alpha I)_+)=\infty$ (resp., $\|A_k\|>\alpha$ when $\mathcal{M}$ is type III). Decompose $\mathbb{N}$ as $\mathbb{N}=\bigcup_{k=1}^{\infty}J_k$ for some disjoint, infinite subsets $J_k$. 

Apply Lemma \ref{monotone sequence} to $A_1$ and $\{B_j\}_{j\in J_1}$ with $\epsilon=1$ 
to obtain that $$A_1=\sum_{j\in J_1}C_j+R_1$$ for some $C_j\sim B_j$  for $j\in J_1$ with infinite $N_{C_j}$  and $0\le R_1\le A_1$.
Then $R_1\perp A_2$ and $\tau((R_1+A_2-\alpha I)_+)=\infty$ if $M$ is semifinite (or $\|R_1+A_2\| > \alpha$ if $M$ is type III). Thus we can apply again Lemma \ref{monotone sequence} to $R_1+A_2$ and $\{B_j\}_{j\in J_2}$ with $\epsilon=\frac{1}{2}$ and obtain that
$R_1+A_2= \sum_{j\in J_2}C_j+R_2$ for some $C_j\sim B_j$  for $j\in J_2$ with infinite $N_{C_j}$  and $0\le R_2\le \frac{1}{2}A_2$. Then  $$A_1+A_2= \sum_{k=1}^2\sum_{j\in J_2}C_j+R_2.$$

Iterating, we find $\{C_{j}\}_{j\in J_k}$ with $C_{j}\sim B_{j}$ and $N_{C_j}$ infinite, $0\le R_k\le \frac{1}{k}A_k\le \frac{1}{k}A$ such that for every $n$, $$\sum_{k=1}^nA_k= \sum_{k=1}^n \sum_{j\in J_{k+1}}C_{j}+R_{n+1}.$$
Since $\|R_n\|\to 0$ this concludes the proof of part i).

\item [ ii)]
Decompose $(\gamma,\alpha]$ as $(\gamma,\alpha]=\bigcup_{k=1}^{\infty}(t_k,t_{k+1}]$ for some sequence $\{t_k\}_{k=1}^{\infty}$ strictly increasing to $\alpha$.

Let $J:=\{j\mid P\prec \chi_{(\beta,\gamma]}(B_j)\}$ and decompose $\mathbb{N}$ as $\mathbb{N}=\bigcup_{k=1}^{\infty}J_k$ for some disjoint, infinite subsets $J_k$ with $J_k\cap J$ infinite for every $k$.

For each $k$, define $$B^j_k=\begin{cases}
B_j\chi_{(t_k,t_{k+1}]}(B_j)\oplus B_j\chi_{[0,\gamma]}(B_j) & \text{ if $j\in J_k$},\\
B_j\chi_{(t_k,t_{k+1}]}(B_j) &\text{ if $j\notin J_k$}.
\end{cases}
$$
Then for every $k\ge1$, $\sup_{j\ge1}\|B^j_k\|\le t_{k+1}<\alpha$ and $P\prec \chi_{(\beta,\gamma]}(B^j_k)$ for all $j\in J\cap J_k$.
Since $\chi_{\{\alpha\}}(B_j)=0$ for every $j$ we obtain that $$B_j=B_j\chi_{[0,\alpha)}(B_j)=\bigoplus_{k=1}^{\infty}B^j_k.$$

Using Lemma \ref{gen ess spect} ii) (resp.,  Lemma \ref{gen spect} ii) when $\mathcal{M}$ is type III,) decompose $A$ as $A=\bigoplus_{k=1}^{\infty}A_k$ with $\|A_k\|_e=\|A\|_e$ (resp.,  $\|A_k\|=\|A\|$ when $\mathcal{M}$ is type III.)
Then apply i) to $A_k$ and $\{B^j_k\}_{j\ge1}$. Hence $A_k=\sum_{j=1}^{\infty}C^j_k$ with $C^j_k\sim B^j_k$  for each $k$ and hence
$$
A=\bigoplus_{k=1}^{\infty}A_k=
\bigoplus_{k=1}^{\infty}\sum_{j=1}^{\infty}C^j_k
=\sum_{j=1}^{\infty}\bigoplus_{k=1}^{\infty}C^j_k=
\sum_{j=1}^{\infty}C_j,
$$
where $C_j=\bigoplus_{k=1}^{\infty}C^j_k\sim
\bigoplus_{k=1}^{\infty}B^j_k=B_j$.

The fact that $N_{C_j}$ can be chosen infinite follows from writing $A=A_1\oplus A_2$ with $\|A_1\|_e=\|A_2\|_e=\|A\|_e$ (resp., $\|A_1\|=\|A_2\|=\|A\|$ when $\mathcal{M}$ is type III) and splitting $\mathbb{N}$ as $\mathbb{N}=J'_1\cup J'_2$ with $J'_1\cap J'_2=\varnothing$ and $J'_1\cap J$ and $J'_2\cap J$ infinite. Then apply the first part of the proof to $A_1$ and $\{B_j\}_{j\in J'_1}$ obtaining $A_1= \sum_{j\in J'_1}C^1_j$ with $C^1_j\sim B_j$ for $j\in J'_1$. Then $C^1_j\le R_{A_1}$, hence $N_{C^1_j}\ge R_{A_2}$ is infinite. In the same way we obtain a decomposition of $A_2=\sum_{j\in J'_2}C^2_j$ with $C^2_j\sim B_j$ and $N_{C_j}$ infinite for $j\in J'_2$.
\end{proof}

Notice that if $\|A\|_e>\alpha$ then $\tau((A-\alpha I)_+)= \infty$ and therefore i) applies. But if  $\|A\|_e=\alpha$ and $\tau((A-\alpha I)_+)< \infty$ the conditions  on the sequence $\{B_j\}$ given in i) need indeed to be strengthened. For instance if $\mathcal{M}=B(\mathcal{H})$, $B_j$ are rank one projections (and hence satisfy i) but not ii)), and $A= I+K$ with $K$ a positive trace-class operator with $\tau (K)\not\in \mathbb N$, and hence $\|A\|_e=1$, then by \cite[Theorem 1.1 (i)]{SSP} $A$ cannot be a sum of projections.

\

We conclude this section by recalling the connection between decompositions of operators into sums of projections and block diagonal forms which was established in \cite[Proposition 3.1]{SSP} and \cite[Proposition 5.1] {FSP} and which can easily be extended as follows to decompositions of positive operators (see also \cite{KL} for the $B(\mathcal{H})$ rank-one projection case).

\begin{proposition}\label{block diagonal decomp}
Let $\mathcal{M}$ be a properly infinite von Neumann algebra, $A\in \mathcal{M}^+$ and $\{B_j\}_{j=1}^N\subseteq \mathcal{M}^+$, where $N\geq1$ is an integer or $N=\infty$.
The following are equivalent:

\item[ i)] $A=\sum_{j=1}^{N}C_j$ for some $C_j\sim B_j$.
\item[ ii)] There exist mutually orthogonal projections $\{E_j\}_{j=1}^N$ and a partial isometry $V$ such that $\sum_{j=1}^NE_j\ge VV^*$, $V^*V\ge R_A$ and  $E_jVAV^*E_j\sim B_j$.

If in addition $N_A\sim I$ then in ii) we can take $V=I$ .
\end{proposition}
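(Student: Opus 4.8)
The plan is to prove both implications by explicit constructions, parallel to the projection case in \cite[Proposition 3.1]{SSP} and \cite[Proposition 5.1]{FSP}, the only change being that projections are everywhere replaced by positive elements: the implication ii) $\Rightarrow$ i) is purely algebraic, i) $\Rightarrow$ ii) uses the proper infiniteness of $\mathcal M$ to manufacture orthogonal copies of $I$, and the refinement $V=I$ will come from promoting a partial isometry to a unitary, which is exactly where the hypothesis $N_A\sim I$ enters. For ii) $\Rightarrow$ i), given $\{E_j\}$ and $V$ as in ii) I would set $C_j:=A^{1/2}V^*E_jVA^{1/2}\ge 0$. Writing $Z_j:=E_jVA^{1/2}$ one has $Z_j^*Z_j=C_j$ and $Z_jZ_j^*=E_jVAV^*E_j\sim B_j$, so $C_j\sim B_j$. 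For the sum, $E:=\sum_jE_j$ is a projection with $VV^*\le E\le I$, hence $V^*EV=V^*V$; since $R_A\le V^*V$ gives $R_A(V^*V)R_A=R_A$ and $A^{1/2}=A^{1/2}R_A$, one gets $\sum_jC_j=A^{1/2}(V^*EV)A^{1/2}=A^{1/2}(V^*V)A^{1/2}=A$, the partial sums being increasing and dominated by $A$ when $N=\infty$. This half uses nothing about $\mathcal M$.

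For i) $\Rightarrow$ ii): since $\mathcal M$ is properly infinite, fix isometries $S_j\in\mathcal M$, $1\le j\le N$, with pairwise orthogonal ranges $F_j:=S_jS_j^*$ and $\sum_jF_j=I$. Choose $X_j\in\mathcal M$ with $C_j=X_jX_j^*$ and $B_j=X_j^*X_j$ (possible as $C_j\sim B_j$); since $C_j\le\sum_iC_i=A$ we have $\|X_j\|=\|C_j\|^{1/2}\le\|A\|^{1/2}$, so $Y:=\sum_jS_jX_j^*$ converges strongly to an element of $\mathcal M$ — the partial sums are norm-bounded and, applied to any $\xi$, Cauchy because $\sum_j\|X_j^*\xi\|^2=\langle A\xi,\xi\rangle$ — with $Y^*Y=\sum_jX_jX_j^*=A$ and $YY^*=\sum_{j,k}S_jX_j^*X_kS_k^*=:M$, so that $S_j^*MS_j=X_j^*X_j=B_j$. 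Taking the polar decomposition $Y=WA^{1/2}$, with $W^*W=R_A$ and $WW^*=R_M$, and putting $V:=W$, $E_j:=F_j$, one checks directly $V^*V=R_A\ge R_A$, $\sum_jE_j=I\ge VV^*=R_M$, and $E_jVAV^*E_j=F_jMF_j=S_j(S_j^*MS_j)S_j^*=S_jB_jS_j^*\sim B_j$, which is ii).

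For the last assertion, assume in addition $N_A\sim I$ and run the same construction with one extra reserved corner: take $F_0,F_1,\dots,F_N$ pairwise orthogonal, each equivalent to $I$, with $\sum_{j=0}^NF_j=I$ and $F_j=S_jS_j^*$, $S_j^*S_j=I$, and form $Y:=\sum_{j=1}^NS_jX_j^*$ as before. Now $M=YY^*$ lies in $(I-F_0)\mathcal M(I-F_0)$, so $R_M\le I-F_0$; since $F_0\sim I$ this gives $I\prec I-R_M\le I$, hence $I-R_M\sim I$ by the Schr\"oder--Bernstein theorem for projections. Combined with $N_A=I-R_A\sim I$, the partial isometry $W$ of the polar decomposition of $Y$ extends to a unitary $\tilde W\in\mathcal M$ with $\tilde WR_A=W$, whence $\tilde WA\tilde W^*=WAW^*=M$. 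Because $S_0^*Y=0$, the corner $F_0$ does not interact with $M$ (that is, $F_0MF_k=F_kMF_0=0$ for all $k\ge1$ and $F_0MF_0=0$), so setting $E_1:=\tilde W^*(F_0+F_1)\tilde W$ and $E_j:=\tilde W^*F_j\tilde W$ for $2\le j\le N$ yields pairwise orthogonal projections with $\sum_jE_j=I$ and $E_jAE_j=\tilde W^*(F_jMF_j)\tilde W\sim S_jB_jS_j^*\sim B_j$ ($F_1$ read as $F_0+F_1$ in the first slot); this is ii) with $V=I$.

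I expect the only points requiring real care to be: (a) the strong convergence of $Y=\sum_jS_jX_j^*$ and the identities $Y^*Y=A$, $YY^*=M$, which rest on the uniform bound $\|C_j\|\le\|A\|$ and on joint strong continuity of multiplication on bounded sets; and (b) in the last part, producing the unitary $\tilde W$, i.e.\ checking $I-R_M\sim I$ via the reserved corner $F_0$ together with Schr\"oder--Bernstein, and then observing that the merged block $F_0+F_1$ still carries the equivalence class of $B_1$ precisely because $M$ has no matrix entries reaching into the $F_0$-corner. Everything else is formal bookkeeping, identical in shape to the projection case of \cite{SSP}, \cite{FSP}.
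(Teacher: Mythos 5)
Your proof is correct and follows essentially the same route as the paper: the same formula $C_j=A^{1/2}V^*E_jVA^{1/2}$ for ii) $\Rightarrow$ i), and for i) $\Rightarrow$ ii) the same device of summing isometric copies of the $C_j$ into orthogonal infinite corners, taking the polar decomposition, and (for $V=I$) reserving an extra corner $F_0\sim I$ so that $N_A\sim I$ plus Schr\"oder--Bernstein lets the partial isometry extend to a unitary, with $F_0$ absorbed into the first block. The only cosmetic difference is that you implement $C_j\sim B_j$ directly via $X_j$ with $X_jX_j^*=C_j$, $X_j^*X_j=B_j$ (so the diagonal compressions come out as $S_jB_jS_j^*$), where the paper uses $X_j=W_jC_j^{1/2}$; this changes nothing of substance.
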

\begin{proof}
\item [ii)] $\Rightarrow$ i) 
Let $C_j=A^{\frac{1}{2}}V^*E_jVA^{\frac{1}{2}}$. 
Then $C_j\sim E_jVAV^*E_j\sim B_j$ and $$\sum_{j=1}^NC_j=A^{\frac{1}{2}}V^*\Big(\sum_{j=1}^NE_j\Big)VA^{\frac{1}{2}}=A^{\frac{1}{2}}V^*VA^{\frac{1}{2}}=A.$$ 
\item [i)] $\Rightarrow$ ii)
Since $\mathcal{M}$ is properly infinite we can find projections $\{E_j\}_{j=0}^N$ such that $E_j\sim I$ and $\sum_{j=0}^NE_j=I$. Choose isometries $\{W_j\}_{j=1}^N$ such that $W_jW_j^*=E_j$ and define $X_j=W_jC_j^{\frac{1}{2}}$, $j\ge1$.

Just as in the proof of \cite [Proposition 3.1]{SSP} where the $C_j$ are rank one projections, one can verify that if $N$ is infinite the series $\sum_{j=1}^NX_j$ is strong operator convergent to some operator $X$ and that $X^*X=A$. Then let $X=VA^{\frac{1}{2}}$ be the polar decomposition of $X$. In particular, $V^*V=R_A$. Since $E_jX=X_j$ and $VAV^*=XX^*$, it follows that  $VV^*=R_X\le \sum_{j=1}^NE_j$ and $$E_jVAV^*E_j=E_jXX^*E_j=X_jX_j^*=W_jC_jW_j^*\sim C_j\sim B_j.$$

Finally, notice  that 
$$ I- VV^*\ge  I-\sum_{j=1}^N E_j=E_0\sim I.$$ Thus if also $I-V^*V=N_A\sim I$, $V$ can be extended to a unitary $U$ and then the conclusion follows by taking the projections $U^*E_jU$ instead of $E_j$ for $j\ge2$ and $U^*E_1\oplus E_0U$ instead of $E_1$.  
\end{proof}

We collect bellow a few remarks that are easy consequences of the proof.
\begin{remark}
\item[ 1)] We have actually proved  that i) is equivalent to asking  that $\sum_{j=1}^NE_j=I$, $E_j\sim I$, $V^*V=R_A$ and $E_jVAV^*E_j\sim B_j$.

We could equally well have found projections $\{E_j\}_{j=1}^N$ and a partial isometry $V$ such that $\sum_{j=1}^NE_j\ge VV^*$, $V^*V=R_A$, $R_{E_jVAV^*E_j}=E_j$ and $E_jVAV^*E_j\sim B_j$. 

\item[ 2)] If $\mathcal{M}$ is an infinite factor and i) holds then for any sequence of mutually orthogonal projections $\{E_j\}_{j=1}^N$ such that $R_{B_j}\prec E_j$ we can find a partial isometry $V$ such that $\sum_{j=1}^NE_j\ge VV^*$, $V^*V=R_A$ and $E_jVAV^*E_j\sim B_j$.
\end{remark}
We further notice that in the case when $\mathcal M= B(\mathcal{H})$ and all the projections $E_j$ and operators $C_j$ have rank-one, then the partial isometry $V$ plays an important role in frame theory.  Indeed the vectors $x_j\in \mathcal{H}$ for which $C_j=x_j\otimes x_j$ form a frame when $A$ is invertible (a Bessel sequence when it is not), and $V$ then coincides with the frame transform (also called analysis operator) of the Parseval frame associated with $\{x_j\}$. 

The following example shows that we cannot expect  to be able to choose $V$ unitary without some further hypotheses on $A$.
\begin{example}
Let $A= 2I$ and $B_j$ be a sequence of equivalent nonzero projections. Then  $A=\sum_{j=1}^\infty C_j$ with $C_j\sim B_j$ but of course $E_jAE_j= 2E_j \not\sim B_j$. The same conclusion holds for every $A=\lambda I$ and $\lambda>1$ by \cite [Theorem 1.1]{SSP}.
 \end{example}
 Thus  combining Theorem \ref {equiv infinite trace} and Proposition \ref {block diagonal decomp} we obtain the following form of the ``pinching theorem" of \cite {PT} for the case of positive operators in von Neumann factors.

\begin{corollary}\label{pinching corollary}
Let $\mathcal{M}$ be a $\sigma$-finite infinite factor, $A\in \mathcal{M}^+$ and $\{B_j\}_{j=1}^\infty \subseteq \mathcal{M}^+$ with $\alpha:=\sup_{j\ge1}\|B_j\|<\infty$. 
Assume one of the following two conditions holds:
\item [ i)] $\tau((A-\alpha I)_+)=\infty$ when $\mathcal{M}$ is semifinite (resp., $\|A\|>\alpha$ when $\mathcal{M}$ is type III) and 
there are a $\beta>0$ and a non-zero projection $P$ for which $P\prec \chi_{(\beta,\infty)}(B_{j})$ for infinitely many indices $j$. 
\item [ ii)]
 $\|A\|_e\ge\alpha$ when $\mathcal{M}$ is semifinite (resp., $\|A\|\ge\alpha$ when $\mathcal{M}$ is type III), 
$\chi_{\{\alpha\}}(B_j)=0$ for all $j$ and 
 there are $0<\beta<\gamma<\alpha$ and a non-zero projection $P$ such that $ P\prec \chi_{(\beta,\gamma]}(B_j)$ for infinitely many indices $j$.

Then there exist mutually orthogonal projections $\{E_j\}_{j=1}^\infty$ with $\sum_{j=1}^\infty E_j=I$, $E_j\sim I$, and a partial isometry $V$ such that  $V^*V\ge R_A$ and  $E_jVAV^*E_j\sim B_j$.
If in addition $N_A\sim I$ then we can take $V=I$ .
\end{corollary}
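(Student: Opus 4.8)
The plan is to obtain the corollary by combining Theorem \ref{equiv infinite trace} with Proposition \ref{block diagonal decomp}; essentially no new argument is needed.

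First I would note that $\mathcal{M}$ is properly infinite: being a factor its center is $\mathbb{C}I$, and since $I$ is an infinite projection it is not finite, so $0$ is the only finite central projection. Hence Proposition \ref{block diagonal decomp} applies to $\mathcal{M}$.

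The hypotheses i) and ii) of the corollary are word for word the two sufficient conditions of Theorem \ref{equiv infinite trace}, so under either of them that theorem produces a decomposition $A=\sum_{j=1}^{\infty}C_j$ with $C_j\sim B_j$. This is exactly condition i) of Proposition \ref{block diagonal decomp} with $N=\infty$. Applying the implication i) $\Rightarrow$ ii) of that proposition in the sharpened form recorded in Remark 1) following it, I get mutually orthogonal projections $\{E_j\}_{j=1}^{\infty}$ with $\sum_{j=1}^{\infty}E_j=I$ and $E_j\sim I$, and a partial isometry $V$ with $V^*V=R_A$ (so in particular $V^*V\ge R_A$) and $E_jVAV^*E_j\sim B_j$ for all $j$, which is precisely the asserted conclusion. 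The supplementary claim that $V=I$ may be chosen when $N_A\sim I$ is exactly the last assertion of Proposition \ref{block diagonal decomp}.

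There is no genuine obstacle here; the only things to keep track of are that the hypotheses of the corollary literally match those of the theorem and that the equality $V^*V=R_A$ supplied by the proposition is consistent with the weaker inequality $V^*V\ge R_A$ in the statement. All of the substantive content lives in Theorem \ref{equiv infinite trace} and Proposition \ref{block diagonal decomp}, and the corollary is just their juxtaposition.
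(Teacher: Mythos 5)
Your proposal is correct and is exactly the paper's argument: the corollary is obtained by feeding the decomposition $A=\sum_{j=1}^{\infty}C_j$ with $C_j\sim B_j$ from Theorem \ref{equiv infinite trace} into the implication i) $\Rightarrow$ ii) of Proposition \ref{block diagonal decomp}, in the sharpened form of the remark following it (which absorbs the leftover projection $E_0$ so that $\sum_{j=1}^\infty E_j=I$ and gives $V^*V=R_A$), with the $N_A\sim I$ case handled by the proposition's final assertion. Nothing further is needed.
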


\section{Sums of unitary equivalent conjugates of a sequence of operators}

To obtain decompositions into sums of unitary equivalent conjugates of a sequence  $\{B_j\}_{j=1}^\infty$, we follow the approach of Bourin and Lee in \cite{UE} for the case when $\mathcal {M}= B(\mathcal{H})$ and $B_j=B$ for all $j$. In particular, in the first step here below (see \cite[Theorem 1.1 case I.(1)]{UE}) which deals with the case when $A$ is invertible, we can use their  lemmas which apply without changes to the factor case. For completeness,  we sketch the proof.
\begin{proposition}\label{unit equiv, invertible}
Let $A\in \mathcal{M}^+$ be invertible and $\{B_j\}_{j=1}^{\infty}\subseteq \mathcal{M}^+$ 
be a sequence with $\alpha:=\sup_{j\ge 1}\|B_j\|<\infty$ and such that there is a $\beta>0$ and a non-zero projection $P$ for which $P\prec \chi_{(\beta,\infty)}(B_{j})$ for infinitely many $j$. If
 $$\begin{cases}\alpha< \|A\|_e,~ 0\in \sigma_e(B_j)&\text{when $\mathcal{M}$ is semifinite,}\\
\alpha< \|A\|, ~0\in \sigma(B_j)&\text{when $\mathcal{M}$ is type III,}\end{cases} $$
then $A=\sum_{j=1}^{\infty}C_j$ for some $C_j\cong B_j$.
\end{proposition}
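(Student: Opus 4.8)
The plan is to adapt Bourin and Lee's treatment of the invertible case of their unitary-equivalence theorem in \cite{UE}; as remarked just before the statement, the lemmas used there are purely spectral-theoretic and survive the passage to a $\sigma$-finite factor. It is worth comparing with Theorem \ref{equiv infinite trace}(i), which already yields a decomposition $A=\sum_jC_j'$ with $C_j'\sim B_j$ (and $N_{C_j'}$ infinite): what remains is to replace the partial isometries implementing $C_j'\sim B_j$ by unitaries, equivalently to arrange $N_{C_j}\sim N_{B_j}$ for every $j$, and it is exactly for this that the extra hypothesis $0\in\sigma_e(B_j)$ (resp.\ $0\in\sigma(B_j)$) is imposed. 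Note that here, unlike in the proof of Theorem \ref{equiv infinite trace}, we cannot first pass to mutually orthogonal or blocked copies of the $B_j$: padding with zeros would enlarge their kernels and thereby destroy the unitary equivalence we are trying to produce. After dividing by $\alpha$ we may assume $\alpha=1$.

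The engine of the argument is a single-copy absorption lemma, drawn from the lemmas of \cite{UE}: if $T\in\mathcal{M}^+$ is invertible with $\|T\|_e>1$ (resp.\ $\|T\|>1$) and $D\in\mathcal{M}^+$ satisfies $\|D\|\le1$ and $0\in\sigma_e(D)$ (resp.\ $0\in\sigma(D)$), then one can write $T=C+T'$ with $C\cong D$ and $T'\in\mathcal{M}^+$ again invertible and $\|T'\|_e=\|T\|_e$ (resp.\ $\|T'\|=\|T\|$). Here $0\in\sigma_e(D)$ forces $D$, hence $C$, to be uniformly small on an infinite projection; placing such a $C$ inside the spectral projection $\chi_{(1,\infty)}(T)$, which is infinite because $\|T\|_e>1$ and which can be split so as to leave a further infinite piece carrying the full essential norm, keeps $T-C$ positive and invertible with undiminished essential norm, and lets $C$ be a genuine unitary conjugate of $D$ rather than a copy with artificially large kernel. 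Iterating this lemma, applied with $D=B_1,B_2,\dots$ to $A=A^{(0)}$ and to its running remainders $A^{(j-1)}=C_j+A^{(j)}$, produces $C_j\cong B_j$ with $A=\sum_{j=1}^nC_j+A^{(n)}$ for every $n$, each $A^{(n)}$ positive invertible with $\|A^{(n)}\|_e=\|A\|_e$ (resp.\ $\|A^{(n)}\|=\|A\|$).

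The step I expect to be the main obstacle is proving that $A^{(n)}\to0$, i.e.\ that $\sum_jC_j=A$. This cannot happen in norm, since every $A^{(n)}$ retains essential norm $\|A\|_e>1$; the convergence has to be produced in the strong operator topology (equivalently, tested against a faithful normal state). For this the single-copy lemma must be applied with a companion accuracy parameter, in the spirit of Lemma \ref{monotone sequence}, and interleaved with a diagonal enumeration of a generating family of vectors, so that at the $j$-th step a definite proportion of $\langle A^{(j-1)}\xi_j,\xi_j\rangle$ is absorbed for the current test vector $\xi_j$; the indices $j$ with $\chi_{(\beta,\infty)}(B_j)$ infinite — of which there are infinitely many, which is the reason that hypothesis is present — are the ones that can carry out this absorption, since a copy of a small operator cannot by itself eat a fixed fraction of a large diagonal entry. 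Over the whole iteration this forces $A^{(n)}\to0$ strongly, hence $\sum_jC_j=A$. This is precisely the bookkeeping Bourin and Lee carry out for a single $B$ in $B(\mathcal{H})$; in a $\sigma$-finite factor the only modifications are to replace the compact operators and the usual essential norm by $\mathcal{J}$ and $\|\cdot\|_e$ and to decompose operators along their essential spectrum using Lemmas \ref{gen ess spect} and \ref{gen spect}, working throughout with the operator norm and Lemma \ref{gen spect} in the type III case.
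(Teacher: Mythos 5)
Your overall strategy (greedy subtraction of exact unitary conjugates, one $B_j$ at a time, from a running invertible remainder) is genuinely different from the paper's proof, and the point you yourself flag as "the main obstacle" is where it breaks down as written. The paper never needs SOT bookkeeping with test vectors. Its Step 1 applies Theorem \ref{equiv infinite trace} to $A-\rho I$ to get an exact decomposition $A-\rho I=\sum_j C_j'$ with $C_j'\sim B_j$ and $N_{C_j'}$ infinite, hence $0\in\sigma_e(C_j')$ (resp.\ $0\in\sigma(C_j')$); then the factor version of \cite[Lemma 2.3]{UE} — two Murray--von Neumann equivalent positive operators, each with $0$ in the essential spectrum, are approximately unitarily equivalent in norm — replaces each $C_j'$ by $C_j\cong B_j$ with $\|C_j'-C_j\|\le\rho/2^j$. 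This is precisely where the hypothesis $0\in\sigma_e(B_j)$ is used, it disposes of your "padding with zeros" objection (the kernels are fixed by a norm perturbation, not by padding), and it produces $A=\sum_j C_j+R$ with $0\le R\le\epsilon I$, a remainder controlled in \emph{norm}. Step 2 then decomposes $A=\sum_k A_k$ into invertible summands with $\|A_k\|_e>\alpha$ (factor version of \cite[Lemma 2.2]{UE}) and applies Step 1 blockwise with $\epsilon=1/k$, feeding each remainder $R_k\le\frac1k I$ into the next block, so the remainders vanish in norm and convergence is immediate. This perturbation lemma is the missing idea in your proposal.

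Concretely, the gap in your route is the claim that the remainders $A^{(n)}$ tend to $0$ strongly. The proposed bookkeeping ("absorb a definite proportion of $\langle A^{(j-1)}\xi_j,\xi_j\rangle$ at step $j$") is not backed by any estimate, and there are real obstructions to it: the $j$-th step is forced to use the given $B_j$, which may be arbitrarily small (the hypothesis only supplies a fixed non-zero projection $P$ with $P\prec\chi_{(\beta,\infty)}(B_j)$ for infinitely many $j$; $P$ may be finite, e.g.\ rank one, so your reading that $\chi_{(\beta,\infty)}(B_j)$ is infinite for infinitely many $j$ is not what is assumed); and to keep $A^{(j)}$ positive, invertible, and of essential norm exceeding $\alpha$, the large part of $C_j$ must be steered into the spectral subspace where $A^{(j-1)}>\alpha$, which is in direct tension with forcing absorption at a prescribed test vector $\xi_j$ where $A^{(j-1)}$ may be small. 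Your single-copy absorption lemma is plausible (though exact preservation $\|T'\|_e=\|T\|_e$ needs the extra care of reserving an untouched infinite spectral piece near $\|T\|_e$, and $\ge\|T\|_e-\epsilon$ is what comes out naturally), but without a quantitative argument that the running remainder is exhausted, the proof is incomplete at its decisive step; the paper's two-step structure exists exactly to avoid this difficulty.
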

\begin{proof}

\textbf{Step 1:} (see \cite[Lemma 2.4]{UE}) We first show that for every $\epsilon>0$, we can decompose $A=\sum_{j=1}^{\infty}C_j+R$ for some $C_j\cong B_j$ and $0\le R\le \epsilon I$.
Choose $0<\rho< \frac{\epsilon}{2}$ such that $A-\rho I\ge 0$ and $\|A-\rho I\|_e>\alpha$ when $\mathcal{M}$ is semifinite (resp., $\|A-\rho I\|>\alpha$ when $\mathcal{M}$ is type III).

Using Theorem \ref{equiv infinite trace}, decompose $A-\rho I$ as $A-\rho I=\sum_{j=1}^{\infty}C'_j$ with $C'_j\sim B_j$ and $N_{C'_j}$ infinite. 

Since $N_{C'_j}$ is infinite,  $0\in \sigma_e(C'_j)$ when $\mathcal{M}$ is semifinite (resp., $0\in \sigma(C'_j)$ when $\mathcal{M}$ is type III).  The result of  \cite[Lemma 2.3]{UE} holds also for factors with a similar proof, that is,  there exist  $ C_j\cong B_j$ such that $\| C'_j-C_j\| \le \frac{\rho}{2^{j}}$. Thus $\sum_{j=1}^{\infty}\|C'_j-C_j\|\le \rho.$ Let $R:= \rho I+\sum_{j=1}^{\infty}(C'_j-C_j)$. Then 
$A=\sum_{j=1}^{\infty}C_j+ R$ and $0\le R \le \epsilon I.$

\textbf{Step 2:} 
The result of \cite[Lemma 2.2]{UE} is true also in factors with a similar proof. Hence we can decompose $A$ as $A=\sum_{k=1}^{\infty}A_k$ with $A_k$ invertible and $\|A_k\|_e>\alpha$ or (resp., $\|A_k\|>\alpha$ if $\mathcal{M}$ is type III).

Write $\mathbb{N}=\bigcup_{k=1}^{\infty}J_k$ for some infinite, disjoint subsets such that the sequences $\{B_j\}_{j\in J_k}$ have the same property as $\{B_j\}_{j=1}^{\infty}$. 

Apply Step 1 to conclude that $A_1=\sum_{j\in J_1}C_j+R_1$ with $C_j\cong B_j$ for $j \in J_1$ and $0\le R_1\le I$. Apply now Step 1 to $A_2+R_1$ to conclude that $A_2+R_1=\sum_{j\in J_2}C_j$ with $C_j\cong B_j$ for $j\in J_2$ and $0\le R_2\le \frac{1}{2}I$.

Continuing in the same way we construct $\{R_k\}_{k=1}^{\infty}$ such that $0\le R_k\le \frac{1}{k}I$ and $A_{k}+R_{k+1}=\sum_{j\in J_{k+1}}C_j+R_{k+1}$ with $C_j\cong B_j$ for $j\in J_k$. Just as in the last part of the proof of Theorem \ref{equiv infinite trace} it is easy to conclude that $A=\sum_{j=1}^{\infty}C_j$ with $C_j\cong B_j$ for every $j$.
\end{proof}

By Lemma \ref {gen ess spect} and  \ref {gen spect} we know that we can decompose $A$ into a direct sum of locally invertible summands
``with the same properties". The next lemma shows that we  can also decompose  each element $B_j$ into a direct sum ``with the same properties".

\begin{lemma}\label{dec of sequence}
Let $\{B_j\}_{j=1}^{\infty}\subseteq \mathcal{M}^+$ with $\alpha:=\sup_{j\ge 1}\|B_j\|<\infty$ and assume that for every $j$, $0\in \sigma_e(B_j)$ when $\mathcal{M}$ is semifinite (resp., $0\in \sigma(B_j)$ when $\mathcal{M}$ is type III) and that  there is a $\beta>0$ and a non-zero projection $P$ such that 
 $P\prec \chi_{(\beta,\infty)}(B_{j})$ for infinitely many integers $j$.

Then for every $j$ there is a sequence of mutually orthogonal infinite projections $\{F^j_k\}_{k=1}^{\infty}\subseteq \{B_j\}'$ such that $\sum_{k=1}^{\infty}F^j_k=I$, $0\in \sigma_e\big((B_j)_{F^j_k}\big)$ when $\mathcal{M}$ is semifinite (resp., $0\in \sigma\big((B_j)_{F^j_k}\big)$ when $\mathcal{M}$ is type III), and for every $k$, $P\prec \chi_{(\beta,\infty)}(B_jF^j_k)$ for infinitely many integers $j$.

If furthermore $\chi_{\{\alpha\}}(B_j)=0$ for every $j$ and for some $0<\beta<\gamma<\alpha$,  $P\prec \chi_{(\beta,\gamma]}(B_{j})$ for infinitely many integers $j$, then the projections $\{F^j_k\}_{k=1}^{\infty}$ can be chosen so that for every $k$, $\sup_{j\ge1}\|B_jF^j_k\|<\alpha$ and 
$P\prec \chi_{(\beta,\gamma]}(B_jF^j_k)$ for infinitely many integers $j$. 
\end{lemma}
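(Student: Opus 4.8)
The plan is to handle each $B_j$ separately, decomposing the von Neumann algebra $\{B_j\}'$ acting on $\mathcal{H}$ into countably many infinite pieces that each see enough of the spectrum of $B_j$. Fix $j$ and work inside $\{B_j\}'$, which is itself a $\sigma$-finite infinite factor (it contains $\chi_{(\beta,\infty)}(B_j)$ and $\chi_{[0,\epsilon)}(B_j)$ for small $\epsilon$, both nonzero, and by $\sigma$-finiteness of $\mathcal{M}$ it is $\sigma$-finite). First I would split the indices: let $J=\{j\mid P\prec\chi_{(\beta,\infty)}(B_j)\}$ (or $\{j\mid P\prec\chi_{(\beta,\gamma]}(B_j)\}$ in the second assertion). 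Partition $\mathbb{N}=\bigsqcup_{k=1}^\infty G_k$ into infinite sets with each $G_k\cap J$ infinite. For $j\notin J$, I only need the projections $F^j_k$ to be infinite, sum to $I$, and keep $0$ in the (essential) spectrum of the compression — no lower bound is required for those indices. For $j\in J$, I additionally need $P\prec\chi_{(\beta,\infty)}(B_jF^j_k)$ (resp., $P\prec\chi_{(\beta,\gamma]}(B_jF^j_k)$) for every $k$, which is only demanded for $k$ with $j\in G_k\cap J$, so in fact for a fixed $j\in J$ I must ensure it holds for \emph{all} $k$ — wait, re-reading: the conclusion asks ``for every $k$, $P\prec\chi_{(\beta,\infty)}(B_jF^j_k)$ for infinitely many $j$'', so for each $k$ it suffices that infinitely many $j\in J$ contribute, and I can arrange that the ``good'' $j$ for slot $k$ are exactly those in $G_k\cap J$.

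The construction for a fixed $j$ proceeds as follows. If $j\in J$: the projection $Q_j:=\chi_{(\beta,\infty)}(B_j)$ (resp., $\chi_{(\beta,\gamma]}(B_j)$) satisfies $Q_j\succeq P\ne 0$. Since $0\in\sigma_e(B_j)$ (resp., $0\in\sigma(B_j)$), we have $\chi_{[0,\epsilon)}(B_j)\ne 0$ with $\tau$-infinite trace (resp., nonzero) for every $\epsilon>0$; using property 1) of the essential spectrum and the argument of Lemma \ref{gen ess spect} i) applied near $0$, decompose $\chi_{[0,\beta]}(B_j)$ into infinite projections $\{H^j_k\}_{k=1}^\infty\subseteq\{B_j\}'$ with $\sum_k H^j_k=\chi_{[0,\beta]}(B_j)$ and $0\in\sigma_e\big((B_j)_{H^j_k}\big)$ for each $k$ (this is exactly the mechanism already used to prove Lemma \ref{gen spect}/\ref{gen ess spect}, splitting the spectral measure of $B_j$ restricted to a neighborhood of $0$ into countably many pieces each still accumulating at $0$). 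Next, choose $P_j\sim P$ with $P_j\le Q_j$ and split the ``bulk'' $Q_j$: since $Q_j$ is infinite and $\sigma$-finiteness forces $Q_j\sim I$, write $Q_j=\sum_{k}G^j_k$ with each $G^j_k$ infinite and $P\prec G^j_k$; then set $F^j_k:=G^j_k\oplus H^j_k$ for all $k$. For $j\notin J$, simply take $F^j_k:=H^j_k$ arising from the same spectral-splitting near $0$ applied to all of $\{B_j\}'$ (no bulk term needed). In the second assertion one also needs $\sup_j\|B_jF^j_k\|<\alpha$: for this, additionally absorb the part $\chi_{(\gamma,\alpha)}(B_j)$ — decompose $(\gamma,\alpha)$ as $\bigcup_k(t_k,t_{k+1})$ with $t_k\uparrow\alpha$, $t_1=\gamma$, put $\chi_{(t_k,t_{k+1})}(B_j)$ into $F^j_k$, and use $\chi_{\{\alpha\}}(B_j)=0$ so that $\sum_k F^j_k$ still equals $I$; then $\|B_jF^j_k\|\le t_{k+1}<\alpha$, while the $\chi_{(\beta,\gamma]}$-bulk lower bound is unaffected.

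Finally, I would verify the four requirements: (a) mutual orthogonality and $\sum_k F^j_k=I$ — immediate since the pieces come from a partition of the spectral projections $\chi_{[0,\beta]}(B_j)$, $\chi_{(\beta,\gamma]}(B_j)$ (or $\chi_{(\beta,\infty)}(B_j)$), and $\chi_{(\gamma,\alpha)}(B_j)$ of $B_j$, which sum to $I$ because $\chi_{\{\alpha\}}(B_j)=0$ (or trivially in the first case); (b) each $F^j_k$ is infinite — true since $H^j_k$ is already infinite; (c) $F^j_k\in\{B_j\}'$ — true since it is built from spectral projections of $B_j$ and from sub-projections of $Q_j$ which can be chosen in $\{B_j\}'\cap Q_j\mathcal{M}Q_j$; (d) $0\in\sigma_e\big((B_j)_{F^j_k}\big)$ — since $H^j_k\le F^j_k$ and $0\in\sigma_e\big((B_j)_{H^j_k}\big)$, monotonicity of the essential spectrum under compression by a larger projection gives the claim (here one uses the observation recorded in the preamble relating $\sigma_e(AP)$ and $\sigma_e(A_P)$; the relevant $I-F^j_k$ is infinite). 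The lower bounds $P\prec\chi_{(\beta,\infty)}(B_jF^j_k)$ (resp., $\chi_{(\beta,\gamma]}$) hold for each $j\in G_k\cap J$ by construction of $G^j_k$, and $G_k\cap J$ is infinite.

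The main obstacle I anticipate is purely bookkeeping: simultaneously partitioning, for each of the countably many $j$, the spectral resolution of $B_j$ near $0$ into infinitely many pieces each retaining $0$ in its essential spectrum, \emph{and} coordinating the ``bulk'' pieces $G^j_k$ across $j$ so that for each slot $k$ the set of $j$ supplying the lower bound $P\prec\chi_{(\beta,\cdot]}(B_jF^j_k)$ is infinite. Both halves are routine given Lemma \ref{gen ess spect} i) (which is precisely the tool that splits a spectral neighborhood of a point of the essential spectrum into infinitely many pieces each still containing that point essentially) and the $\sigma$-finite infinite factor structure; I do not expect any genuinely new difficulty, so the proof will mostly consist of carefully naming the index sets $G_k$ and spectral intervals.
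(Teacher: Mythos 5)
Your overall strategy coincides with the paper's: split $\chi_{[0,\beta]}(B_j)$ near $0$ using Lemma \ref{gen ess spect} i), use spectral annuli with endpoints increasing to $\alpha$ to force $\sup_j\|B_jF^j_k\|<\alpha$, and use an index partition $\mathbb{N}=\bigcup_k J_k$ with each $J_k\cap J$ infinite. However, your treatment of the ``bulk'' for $j\in J$ has a genuine gap. You set $Q_j:=\chi_{(\beta,\infty)}(B_j)$ (resp.\ $\chi_{(\beta,\gamma]}(B_j)$), assert that ``$Q_j$ is infinite and $\sigma$-finiteness forces $Q_j\sim I$'', and then split $Q_j=\sum_k G^j_k$ into infinitely many infinite pieces with $P\prec G^j_k$ for every $k$. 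Nothing in the hypotheses makes $Q_j$ infinite: the assumption is only that $P\prec\chi_{(\beta,\infty)}(B_j)$ for a fixed non-zero projection $P$, and $P$ may be finite. For instance, with $\mathcal{M}=B(\mathcal{H})$, $P$ of rank one and $\chi_{(\beta,\infty)}(B_j)$ of rank one, $Q_j$ cannot be split into infinitely many non-zero pieces at all, let alone pieces each dominating a copy of $P$. In fact your construction needs $Q_j$ to dominate an infinite orthogonal sum of copies of $P$, which is strictly stronger than anything assumed; the reduction that makes $\chi_{(\beta,\infty)}(B_j)$ infinite is performed inside the proof of Theorem \ref{equiv infinite trace} and is not available here, since Lemma \ref{dec of sequence} is applied in Theorem \ref{unit equiv sequence} as stated.

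The repair is precisely the observation you made, and then abandoned, in your first paragraph: for each $k$ one only needs infinitely many good indices $j$, so for a fixed $j$ the whole bulk may be placed in a single slot. This is what the paper does: with $F_j=\chi_{[0,\beta]}(B_j)$ and $\tilde F^j_k$ the pieces of $F_j$, it sets $F^j_k=\tilde F^j_k$ for $j\notin J_k$ and $F^j_k=\tilde F^j_k\oplus(I-F_j)$ for the unique $k$ with $j\in J_k$ (and in the second assertion adds $\chi_{(t_k,t_{k+1}]}(B_j)$ always, plus $\chi_{(\beta,\gamma]}(B_j)$ only in that one slot). Then $P\prec\chi_{(\beta,\infty)}(B_jF^j_k)$ holds for all $j\in J\cap J_k$, an infinite set, and no splitting of $Q_j$ is ever required. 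Two smaller slips: your annuli must be half-open, $(t_k,t_{k+1}]$, or else $\sum_k F^j_k$ misses $\chi_{\{t_{k+1}\}}(B_j)$; and for $j\notin J$ in the second assertion you must split only $\chi_{[0,\beta]}(B_j)$ near $0$ (not all of $I$) and still assign $\chi_{(\beta,\gamma]}(B_j)$ to some slot, otherwise either the bound $\sup_{j\ge1}\|B_jF^j_k\|<\alpha$ or $\sum_k F^j_k=I$ fails. Finally, $\{B_j\}'$ is of course not a factor in general, though your argument does not really use that claim.
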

\begin{proof}

We prove the lemma when $\mathcal{M}$ is semifinite and leave to the reader the similar proof of the case when $\mathcal{M}$ is type III.

Let $J:=\{\, j\mid P\prec\chi_{(\beta, \infty)}(B_j)\,\}$. Decompose $\mathbb{N}=\bigcup_{k=1}^{\infty}J_k$ such that the sets $J_k$ are disjoint and $J_k\cap J$ is infinite for every $k$.  

For every $j\ge 1$ let $F_j=\chi_{[0,\beta]}(B_j)$. From the hypothesis that $0\in \sigma_e(B_j)$ it  follows that $F_j$ is infinite and $0\in \sigma_e\left((B_j)_{F_j}\right)$.  Applying Lemma \ref{gen ess spect} i) to $\mathcal{M}_{F_j}$ and $(B_j)_{F_j}$,  decompose each $B_jF_j$ as $$B_jF_j=\bigoplus_{k=1}^{\infty} B_j\tilde F^j_k$$ for some sequences of mutually orthogonal infinite projections $\{\tilde F_k^j\}_{k\ge1}\subseteq \{B_jF_j\}'$, with $\sum_{k=1}^{\infty}\tilde F_k^j=F_j$ and $0\in \sigma_e\left((B_j)_{\tilde F^j_k}\right)$. 

For every $k$, define 
$$F_k^j=\begin{cases}
\tilde F_k^j&\text{if $j\notin J_k$},\\
\tilde F_k^j \oplus (I-F_j)&\text{if $j\in J_k$}.
\end{cases}
$$

Then $\sum_{k=1}^{\infty}F_k^j=I$, $\{F_k^j\}_{k=1}^{\infty}\subseteq \{B_j\}'$, $F_k^j$ is infinite, and $0\in \sigma_e\left((B_j)_{F^j_k}\right)$ for every $k$ and $j$. 

Finally, notice that $ \chi_{(\beta,\infty)}(B_j)=\chi_{(\beta,\infty)}(B_jF_k^j)$ for $j\in J_k$ and therefore  $P\prec \chi_{(\beta,\infty)}(B_jF_k^j)$ for all $j\in J\cap J_k$ which concludes the first part of the proof.

Assume now that $\chi_{\{\alpha\}}(B_j)=0$ for every $j$ and $J:=\{\, j\mid P\prec\chi_{(\beta, \gamma]}(B_j)\,\}$ is infinite. Then define $J_k$, $F_j$ and $\{\tilde F_k^j\}_{k\ge1}$ just like above.
Let $\{t_k\}_{k=1}^{\infty}$ be a strictly increasing sequence to $\alpha$ with $t_1=\gamma$, and  define  for every $k$:
$$F_k^j=\begin{cases}
\tilde F_k^j\oplus\chi_{(t_k,t_{k+1}]}(B_j)&\text{if $j\notin J_k$},\\
\tilde F_k^j \oplus \chi_{(t_k,t_{k+1}]}(B_j)\oplus\chi_{(\beta,\gamma]}(B_j)&\text{if $j\in J_k$}.
\end{cases}
$$

Then  $\sum_{k=1}^{\infty}F^j_k=I$ because $\chi_{\{\alpha\}}(B_j)=0$, $\sup_{j\ge1}\|B_jF^j_k\|\le t_{k+1}<\alpha$ for every $k$, and   
$P\prec \chi_{(\beta,\gamma]}(B_jF_k^j)$ for all $j\in J\cap J_k$.
\end{proof}
\begin{theorem}\label{unit equiv sequence}
Let $\mathcal{M}$ be a $\sigma$-finite, infinite factor, $A$, $B\in \mathcal{M}^+$, and $\{B_j\}_{j=1}^{\infty}\subseteq \mathcal{M}^+$ with $\alpha:=\sup_{j\ge 1}\|B_j\|<\infty$.  
Assume that  for all $j$, $N_A\prec N_{B_j}$ and $0\in \sigma_e(B_j)$ when $\mathcal{M}$ is semifinite (resp., $0\in \sigma(B_j)$ when $\mathcal{M}$ is type III).  Each of the following two conditions is sufficient  for $A=\sum_{j=1}^{\infty}C_j$ for some $C_j\cong B_j$:
\item[ i)] $\|A\|_e>\alpha$ when $\mathcal{M}$ is semifinite (resp., $\|A\|>\alpha$ when $\mathcal{M}$ is type III) and there are a  $\beta>0$ and a non-zero projection $P$ for which  $P\prec \chi_{(\beta,\infty)}(B_{j})$ for infinitely many integers $j$.

\item[ ii)] $\|A\|_e\ge\alpha$ when $\mathcal{M}$ is semifinite (resp., $\|A\|\ge\alpha$ when $\mathcal{M}$ is type III), $\chi_{\{\alpha\}}(B_j)=0$ for every $j$, and there are $0<\beta<\gamma<\alpha$ and a non-zero projection $P$ for which $P\prec \chi_{(\beta,\gamma]}(B_{j})$ for infinitely many integers $j$.
\end{theorem}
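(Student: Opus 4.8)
The plan is to reduce the unitary-equivalence statement to the equivalence statement of Theorem \ref{equiv infinite trace} plus Proposition \ref{unit equiv, invertible}, via a decomposition of $A$ into locally invertible pieces and a matching decomposition of each $B_j$ provided by Lemma \ref{dec of sequence}. First I would handle the invertible part of $A$. Write $A = A\chi_{(\delta,\infty)}(A) \oplus A\chi_{[0,\delta)}(A)$ for a small $\delta>0$ chosen so that the first summand, which I call $A_\infty$, still satisfies $\|A_\infty\|_e > \alpha$ (resp.\ $\|A_\infty\| > \alpha$) — possible by property 2) of the essential norm under hypothesis i), and directly under hypothesis ii) after first splitting off a piece with $\|A_\infty\|_e$ attaining $\|A\|_e$ using Lemma \ref{gen ess spect} ii). More precisely, using Lemma \ref{gen ess spect} ii$\,'$) (resp.\ Lemma \ref{gen spect} ii$\,'$)) I would decompose $A = \bigoplus_{k=0}^\infty A_k$ into locally invertible summands, with $A_0$ carrying the kernel of $A$ and with each $A_k$ for $k \ge 1$ invertible in $\mathcal{M}_{E_k}$ and satisfying $\|A_k\|_e > \alpha$ (resp.\ $> \alpha$, or $\ge \alpha$ in the boundary case ii)).

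Next I would split the index set and the operators. Using Lemma \ref{dec of sequence}, for each $j$ decompose $B_j = \bigoplus_{k=1}^\infty B_j F_k^j$ so that each compression $(B_j)_{F_k^j}$ retains $0$ in its (essential) spectrum, retains the infiniteness of the relevant $\chi_{(\beta,\infty)}$ (resp.\ $\chi_{(\beta,\gamma]}$) projection dominating $P$ for infinitely many $j$, and — in case ii) — satisfies $\sup_j \|B_j F_k^j\| < \alpha$. Then apply Proposition \ref{unit equiv, invertible} to the pair $(A_k, \{B_j F_k^j\}_{j\ge1})$ for each $k\ge1$: its hypotheses are met because $A_k$ is invertible on its support, $\|A_k\|_e > \alpha \ge \sup_j \|B_j F_k^j\|$ (in case i)) or $\|A_k\| > \sup_j\|B_jF_k^j\|$ with the strict gap coming from the last clause of Lemma \ref{dec of sequence} (in case ii)), and the $P\prec\chi_{(\beta,\infty)}$-type condition is inherited. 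This yields $A_k = \sum_{j=1}^\infty C_k^j$ with $C_k^j \cong B_j F_k^j$. The only piece not yet covered is $A_0$, the kernel; here I would use the hypothesis $N_A \prec N_{B_j}$: since the $C_k^j$ can be taken (via the remarks after Proposition \ref{block diagonal decomp}, or by the infinite-null-space bookkeeping of Theorem \ref{equiv infinite trace}) with large null space, one can absorb the rank-one-in-$k$ contribution of $A_0$ into, say, the $k=1$ term by enlarging $B_1 F_1^1$ to $B_1 F_1^1 \oplus 0_{N_A}$, which is still $\cong$ a compression of $B_1$ precisely because $N_A \prec N_{B_1}$.

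Finally I would reassemble: setting $C_j := \bigoplus_{k=1}^\infty C_k^j$, the interchange of the two sums (legitimate for SOT-convergent series of positive operators with mutually orthogonal supports in the $k$ direction) gives
\[
A = \bigoplus_{k=1}^\infty A_k = \bigoplus_{k=1}^\infty \sum_{j=1}^\infty C_k^j = \sum_{j=1}^\infty \bigoplus_{k=1}^\infty C_k^j = \sum_{j=1}^\infty C_j,
\]
and $C_j = \bigoplus_k C_k^j \cong \bigoplus_k B_j F_k^j = B_j$ because a direct sum of unitary conjugations by $U_k$ on orthogonal summands is itself a unitary conjugation by $\bigoplus_k U_k$.

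I expect the main obstacle to be the boundary case ii) where $\|A\|_e = \alpha$: there Proposition \ref{unit equiv, invertible} (which needs a strict inequality $\alpha < \|A_k\|_e$) does not apply directly to pieces of $A$ with $\|A_k\|_e = \alpha$, so the real work is arranging, through the truncations $t_k \uparrow \alpha$ in Lemma \ref{dec of sequence}, that $\sup_j \|B_j F_k^j\| \le t_{k+1} < \alpha$ strictly for every $k$, thereby restoring the strict gap needed to invoke Proposition \ref{unit equiv, invertible} on each $A_k$; this is exactly why the hypothesis $\chi_{\{\alpha\}}(B_j) = 0$ is indispensable. A secondary, more routine nuisance is the careful handling of $N_A$ versus $N_{B_j}$ and of the SOT-convergence and sum-interchange, which parallels the corresponding arguments in the proof of Theorem \ref{equiv infinite trace}.
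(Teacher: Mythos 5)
Your skeleton is essentially the paper's: decompose $A=\bigoplus_k AE_k$ into locally invertible summands with $\|AE_k\|_e=\|A\|_e$ (Lemma \ref{gen ess spect} ii$'$), resp.\ Lemma \ref{gen spect} ii$'$)), decompose each $B_j=\bigoplus_k B_jF^j_k$ by Lemma \ref{dec of sequence}, apply Proposition \ref{unit equiv, invertible} blockwise, and patch the blockwise conjugations into one unitary per $j$; your reading of case ii) (the truncations $t_k\uparrow\alpha$ restore the strict gap needed in Proposition \ref{unit equiv, invertible}, which is why $\chi_{\{\alpha\}}(B_j)=0$ is needed) is exactly the paper's mechanism. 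The genuine gap is your treatment of the kernel of $A$, i.e., precisely the point where the hypothesis $N_A\prec N_{B_j}$ must be used. Since $0\le C_j\le A$ forces $N_{C_j}\ge N_A$ for \emph{every} $j$, your reassembled $C_j=\bigoplus_{k\ge1}C^j_k$ is of the form $T_jB_jT_j^*$ with $T_j$ only an isometry onto $R_A$ (once $E_0\supseteq N_A$ is set aside, the target projections $E_k$, $k\ge1$, sum to $R_A$, not $I$), so $N_{C_j}\sim N_A\oplus N_{B_j}$. When $0\ne N_A$ is finite and $N_{B_j}$ is finite (e.g.\ $N_{B_j}\sim N_A$), this is strictly larger than $N_{B_j}$ by a trace count, so $C_j\not\cong B_j$ no matter how the blocks are arranged. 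Your proposed repair---absorbing $A_0$ into the $k=1$ block and replacing $B_1F^1_1$ by $B_1F^1_1\oplus 0_{N_A}$---cannot work: the excess $N_A$ sits inside $N_{C_j}$ for every $j$, not just $j=1$, so a correction in a single block for a single index cannot restore $C_j\cong B_j$ for all $j$; moreover, putting the kernel inside a block destroys the invertibility that Proposition \ref{unit equiv, invertible} requires, and ``unitarily equivalent to a compression of $B_1$'' is in any case not the conclusion needed.

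What the paper does instead is a reduction \emph{before} any block decomposition. If $N_A$ is infinite, Theorem \ref{equiv infinite trace} already finishes the proof: it gives $C_j\sim B_j$ with $N_{C_j}$ infinite, and $N_{B_j}$ contains a projection equivalent to $N_A$, hence is infinite, so in a $\sigma$-finite factor $N_{C_j}\sim N_{B_j}$ and the equivalence upgrades to unitary equivalence (no use of Proposition \ref{unit equiv, invertible} at all in this case). If $N_A$ is finite, one first replaces each $B_j$ by a unitary conjugate satisfying $R_{B_j}\le R_A$: indeed $N_A\sim R_j\le N_{B_j}$, finite equivalent projections in a factor are unitarily equivalent, hence $R_A\cong R_j^{\perp}\ge R_{B_j}$. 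One then compresses to $\mathcal{M}_{R_A}$, where the reduced $A$ has $R_A=I$, and runs your block argument there: now $\sum_k E_k=I$ and $\sum_k F^j_k=I$, the blockwise partial isometries patch to honest unitaries of $\mathcal{M}_{R_A}$, and extending by the identity on $N_A$ yields $C_j\cong B_j$ in $\mathcal{M}$. This per-$j$ pre-conjugation, carried out for every $j$ using $N_A\prec N_{B_j}$, is the step missing from your proposal; the rest of your outline matches the paper and is sound.
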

\begin{proof}

By Theorem \ref{equiv infinite trace} it follows that $A=\sum_{j=1}^\infty C_j$ for some $C_j\sim B_j$ with $N_{C_j}$ infinite. If $N_A$ is infinite then $N_{B_j}$ is infinite for all $j$ and hence $N_{B_j}\sim N_{C_j}$  and thus $C_j\cong B_j$ for every $j$.

Assume henceforth that $N_A$ is finite. Then $N_A\sim R_j$ for some $R_j\le N_{B_j}$ and since in a factor finite equivalent projections are unitarily equivalent we obtain $N_A\cong R_j$. But then $R_A\cong R_j^{\perp}\ge R_{B_j}$ and hence $R_{B_j}\cong Q_j$ for some $Q_j\le R_A$.
Hence by replacing the $B_j$'s with unitary conjugates we can thus further assume that $R_{B_j}\le R_A$ for every $j$.  
Then all the hypotheses of the theorem are satisfied by the compressions $A_{R_A}$ and $\{(B_j)_{R_A}\}_{j=1}^{\infty}$ belonging to the factor $\mathcal{M}_{R_A}$. To simplify notations, assume henceforth that $R_A=I$. 

By using Lemma \ref{gen ess spect} ii$\,'$)  (resp., Lemma \ref{gen spect} ii$\,'$) when $\mathcal{M}$ is type III) decompose $A$ as $A=\bigoplus_{k=1}^{\infty}AE_k$ with mutually orthogonal projections  $E_k\in\{A\}'$ with $\|AE_k\|_e=\|A\|_e$ (resp., $\|AE_k\|=\|A\|$ when $\mathcal{M}$ is type III), and $AE_k$  locally invertible for all $k$. Notice that then $E_k$ must be infinite and because $R_A=I$, $R_{AE_k}=E_k$ and $\sum_{k=1}^{\infty}E_k=I$.

In case i), by using Lemma \ref{dec of sequence} decompose $B_j$ as  $B_j=\bigoplus_{k=1}^{\infty}B_jF^j_k$ for some sequence of mutually orthogonal infinite projections $\{F^j_k\}_{k=1}^{\infty}\subseteq \{B_j\}'$ with $$\sum_{k=1}^{\infty}F^j_k=I, 0\in \sigma_e\big((B_j)_{F^j_k}\big) \text{ (resp., $0\in \sigma\big((B_j)_{F^j_k}\big)$ if $\mathcal{M}$ is type III), }$$ and for each $k$, $P\prec \chi_{(\beta,\infty)}(B_jF^j_k)$ for infinitely many indices $j$. Furthermore, $\sup_{j\ge1}\|B_jF^j_k\|_e\le \alpha < \|AE_k\|_e$ for all $k$ (resp., $\sup_{j\ge1}\|B_jF^j_k\|\le \alpha < \|AE_k\|$ for all $k$ when $\mathcal M$ is type III).

In case ii), and again by Lemma  \ref{dec of sequence}, we obtain the same conclusion but with $\sup_{j\ge1}\|B_jF^j_k\|_e< \alpha \le \|AE_k\|_e$ for all $k$ (resp., $\sup_{j\ge1}\|B_jF^j_k\|< \alpha \le \|AE_k\|$ for all $k$ when $\mathcal M$ is type III).

In both cases, for every $j$, $\sum_{k=1}^\infty E_k=I$ and $\sum_{k=1}^\infty F^j_k=I$ and all the projections $E_k$ and $F^j_k$ are infinite and hence equivalent. Therefore  there is a unitary $U_j\in \mathcal M$
such that $U_jF^j_kU_j^*=E_k$ for all $k$.

Let $B'_j=U_jB_jU_j^*$, hence $B'_j=\bigoplus_{k=1}^\infty B_j'E_k$. From the above constructions we see that for every $k$, $\mathcal{M}_{E_k}$,  $A_{E_k}$ and $\{(B_j')_{E_k}\}_{j=1}^{\infty}$ satisfy the hypotheses of Proposition \ref{unit equiv, invertible}. 
Thus $AE_k=\sum_{j=1}^{\infty}C_k^j$ for some $C_k^j\in \mathcal{M}_{E_k}$ and congruent to $B_j'E_k$ {\it within} $\mathcal{M}_{E_k}$, that is such that there are partial isometries $V_k^j\in \mathcal{M}$ with $V_k^j(V_k^j)^*=(V_k^j)^*V_k^j=E_k$ for which
$C_k^j= V_k^jB_j'E_k(V_k^j)^*$.
Then $V^j:=\bigoplus_{k=1}^\infty V_k^j$ is a unitary in $\mathcal{M}$ and
\begin{align*}
A&=\bigoplus_{k=1}^{\infty}AE_k
=\bigoplus_{k=1}^{\infty}\sum_{j=1}^{\infty}V_k^jB_j'E_k(V_k^j)^*\\
&=\sum_{j=1}^{\infty}\bigoplus_{k=1}^{\infty}V_k^jB_j'(V_k^j)^*=\sum_{j=1}^{\infty}V^jB_j'(V^j)^*. 
\end{align*}

Taking $C_j:=V^jB_j'(V^j)^*\cong B_j'\cong B_j$ concludes the proof.   
\end{proof}

\section{Equivalent and unitarily equivalent copies of a single operator}

In this section we apply the results of section 4 to the case when all the operators $B_j$ are equivalent or are unitarily equivalent to a given non-zero operator $B$. We are thus able to answer affirmatively the conjecture posed by Bourin and Lee in \cite{SMV} and \cite{UE}.
We start by considering necessary conditions:

\begin{proposition}\label{nec equiv}
Let $A$, $B$ $\in \mathcal{M}^+$.
If $A=\sum_{j=1}^{\infty}C_j$ for some $C_j\sim B$ for all $j$,  then the following conditions hold:
\item [ i)] 
$\|A\|\ge \|B\|$ and $R_B\prec R_A$. If $\mathcal{M}$ is semifinite, then $\|A\|_e\ge \|B\|$.
\item [ ii)]  One of the following mutually exclusive conditions holds: 
\begin{enumerate}
\item [ 1)] $\|A\|>\|B\|$,
\item [ 2)] $\|A\|=\|B\|$ and $\chi_{\{\|B\|\}}(B)=0$,
\item [ 3)]$\|A\|=\|B\|$,
$\chi_{\{\|B\|\}}(B)\ne 0$,  $B\ne \|B\|\chi_{\{\|B\|\}}(B)$ and $\chi_{\{\|A\|\}}(A)$ is infinite,
\item [ 4)]$\|A\|=\|B\|$, $B=\|B\|\chi_{\{\|B\|\}}(B)$, $A=\|A\|\chi_{\{\|A\|\}}(A)$ and $\chi_{\{\|A\|\}}(A)$ is infinite.
\end{enumerate}
\noindent If $A=\sum_{j=1}^{\infty}C_j$ for some $C_j\cong B$ for all $j$, then in addition to the conditions i) and ii), the following conditions hold:
\item [ iii)] $N_A\prec N_B$,
\item [ iv)] $0\in \sigma(B)$ and if $\mathcal{M}$ is semifinite, then $0\in \sigma_e(B)$, 
\item [ v)] If $\|A\|=\|B\|$, then $\chi_{\{\|B\|\}}(B) \prec N_B$. 

\end{proposition}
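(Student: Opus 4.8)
The plan is to extract each necessary condition by exploiting the basic structural constraints imposed by an infinite SOT-convergent sum. For (i), I would first observe that $C_j \sim B$ forces $\|C_j\| = \|B\|$ and $R_{C_j} \sim R_B$. From $A = \sum_{j} C_j \geq C_1$ I get $\|A\| \geq \|C_1\| = \|B\|$, and $R_A \geq R_{C_1} \sim R_B$, so $R_B \prec R_A$. For the essential-norm statement in the semifinite case, the idea is that the tail $\sum_{j \geq n} C_j$ is still a positive operator $\geq C_n$, so $\|A\|_e = \|\sum_{j \geq n} C_j\|_e \geq \|C_n\|_e$; but more to the point, one shows $\|A\|_e \geq \|B\|$ directly — if $\|A\|_e < \|B\|$, then $A - \|A\|_e' I$ would be essentially negative beyond a finite projection for some $\|A\|_e < \|A\|_e' < \|B\|$, while the partial sums $C_1 + \cdots + C_n$ have norm $\geq \|B\|$ on an infinite-dimensional reducing subspace coming from the range projections of the $C_j$; I'd make this precise via property 2) of the essential norm, noting $\chi_{(\|A\|_e', \infty)}(A)$ is finite but $\sum_j R_{C_j}$-style estimates force infinitely many "large" spectral contributions. (If this direct argument is awkward, an alternative is to invoke Lemma~\ref{subsume a sequence}-type reasoning in reverse, or simply cite that $A \geq C_1 + \cdots + C_n$ for all $n$ together with a compactness-of-the-tail argument.)

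For (ii), the four cases are a dichotomy analysis of what happens when $\|A\| = \|B\|$ (case 1 is just the strict inequality, so there is nothing to prove there). Assume $\|A\| = \|B\|$; if $\chi_{\{\|B\|\}}(B) = 0$ we are in case 2 and done. So suppose $\chi_{\{\|B\|\}}(B) \neq 0$. Each $C_j \sim B$ then has $\chi_{\{\|B\|\}}(C_j) \neq 0$, and these spectral projections, living under the mutually "spread out" structure, contribute to $\chi_{\{\|A\|\}}(A)$: since $A \geq C_1 + \cdots + C_n$ and $\|A\| = \|B\| = \|C_j\|$, a vector on which $C_j$ achieves its norm must be a norm-achieving vector for $A$ as well (because $0 \leq C_j \leq A$ and $\langle C_j \xi, \xi\rangle = \|B\| \|\xi\|^2 = \|A\| \|\xi\|^2$ forces $\langle A\xi,\xi\rangle = \|A\|\|\xi\|^2$, hence $A\xi = \|A\|\xi$). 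Thus $\chi_{\{\|A\|\}}(A) \succeq \chi_{\{\|B\|\}}(C_j) \sim \chi_{\{\|B\|\}}(B)$, and in fact $\chi_{\{\|A\|\}}(A) \geq \sum_j \chi_{\{\|A\|\}}(C_j)$ — but these latter summands, after passing to mutually orthogonal equivalent copies as in the proof of Lemma~\ref{monotone sequence}, can be taken mutually orthogonal and nonzero, giving infinitely many orthogonal nonzero subprojections of $\chi_{\{\|A\|\}}(A)$, so it is infinite. That gives the "$\chi_{\{\|A\|\}}(A)$ infinite" conclusion common to cases 3 and 4. To split 3 from 4: if $B \neq \|B\|\chi_{\{\|B\|\}}(B)$ we are in case 3. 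If $B = \|B\|\chi_{\{\|B\|\}}(B)$, i.e. $B$ is a scalar multiple of a projection, then each $C_j = \|B\| Q_j$ for a projection $Q_j \sim R_B$, so $A = \|B\| \sum_j Q_j$, and since $A \leq \|A\| I = \|B\| I$ we get $\sum_j Q_j \leq I$; but an infinite sum of nonzero projections bounded by $I$ and with $\|A\| = \|B\|$ attained forces $A = \|A\|\chi_{\{\|A\|\}}(A)$ — on the orthocomplement of $\chi_{\{\|A\|\}}(A)$, $A$ has norm $< \|A\| = \|B\|$, yet it still equals $\|B\|$ times a sum of projections restricted there, which is impossible unless that sum is $0$ there. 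So $A = \|A\|\chi_{\{\|A\|\}}(A)$, giving case 4.

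For the unitary-equivalence additions: (iii) $N_A \prec N_B$ follows because $C_j \cong B$ gives $N_{C_j} \cong N_B$, and $N_A = R_A^\perp \leq R_{C_j}^\perp = N_{C_j}$ (since $R_{C_j} \leq R_A$), hence $N_A \preceq N_{C_j} \sim N_B$. (iv): if $0 \notin \sigma(B)$ then $B$ is invertible on its range with $B \geq \delta R_B$, so $C_j \geq \delta R_{C_j}$ with $R_{C_j} \cong R_B$; now either infinitely many $R_{C_j}$ are "large" forcing $A$ to be too big in a way inconsistent with SOT-convergence, or more simply: unitary equivalence forces $R_{C_j} \cong R_B$ to all have the same "size", and if $R_B$ is infinite then $R_{C_j} = I$ (in the relevant compression) so $C_j \geq \delta I$ and $\sum C_j$ cannot converge; if $R_B$ is finite of trace $r$, then $\tau(C_j) \geq \delta r$, so $\tau(A) = \infty$, which is fine — so instead I use: $0 \notin \sigma_e(B)$ (semifinite case) means $\chi_{[0,\delta]}(B)$ is finite for small $\delta$, hence $N_B$ is finite, hence all $N_{C_j} \cong N_B$ are finite with the same trace, while $N_A \preceq N_{C_j}$; but then for the tail, $R$ on a large subspace... — honestly the cleanest route is the contrapositive via Theorem~\ref{unit equiv sequence}'s necessity shadow, or a direct argument: if $0 \notin \sigma_e(B)$, then $R_B^\perp = N_B$ is finite, so $C_j = U_j B U_j^*$ has $N_{C_j}$ finite of fixed trace $\tau(N_B) =: m < \infty$; the range projections $R_{C_j}$ satisfy $\tau(I - R_{C_j}) = m$, and since $\sum_j C_j$ converges in SOT, $C_j \to 0$ strongly, which combined with $C_j \geq \delta R_{C_j}$ forces $R_{C_j} \to 0$ strongly — impossible since $\tau(R_{C_j}^\perp) = m$ is bounded so $R_{C_j}$ cannot go to $0$. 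This contradiction gives $0 \in \sigma_e(B)$; the plain $\sigma(B)$ statement (type III or general) is the analogous argument with "$R_{C_j} \to 0$ strongly is impossible when $R_{C_j} = R_B^\perp{}^\perp$-equivalent is nonzero" — actually $R_{C_j} \cong R_B$ nonzero means $R_{C_j} \not\to 0$ strongly in a $\sigma$-finite factor, contradiction, so $0 \in \sigma(B)$. Finally (v): if $\|A\| = \|B\|$ then by the case analysis in (ii) we're in case 2, 3, or 4; in case 2, $\chi_{\{\|B\|\}}(B) = 0 \prec N_B$ trivially; in cases 3 and 4, each $C_j = U_j B U_j^*$ has $\chi_{\{\|A\|\}}(C_j) \cong \chi_{\{\|B\|\}}(B)$ and these orthogonal copies sit inside $\chi_{\{\|A\|\}}(A)$ which (as shown) is infinite, while $\sum_j \chi_{\{\|A\|\}}(C_j) \leq \chi_{\{\|A\|\}}(A)$ forces $\chi_{\{\|B\|\}}(B)$ to embed infinitely often orthogonally, but the sharper point is: since $A \leq \|A\| I$, on $\chi_{\{\|A\|\}}(C_j)$ the operator $A - C_j \geq 0$ is $\leq \|A\|I - C_j$ which vanishes there, so $A = \|A\|$ exactly there and $\sum_{i \neq j} C_i$ vanishes on that subspace, meaning $\chi_{\{\|A\|\}}(C_j) \leq N_{C_i}$ for all $i \neq j$; taking $i$ fixed and using $\chi_{\{\|A\|\}}(C_j) \cong \chi_{\{\|B\|\}}(B)$ while $N_{C_i} \cong N_B$ yields $\chi_{\{\|B\|\}}(B) \preceq N_B$, i.e. $\chi_{\{\|B\|\}}(B) \prec N_B$.

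The main obstacle I anticipate is making the "norm-attaining vectors propagate" arguments rigorous at the level of von Neumann algebras rather than vectors — i.e., replacing "$\xi$ with $A\xi = \|A\|\xi$" by clean spectral-projection inequalities like $\chi_{\{\|A\|\}}(C_j) \leq \chi_{\{\|A\|\}}(A)$ and $\chi_{\{\|A\|\}}(C_j) \leq N_{C_i}$ for $i \neq j$. The key lemma underlying all of this is: if $0 \leq C \leq A$ and $\lambda = \|A\|$, then $\chi_{\{\lambda\}}(C) \leq \chi_{\{\lambda\}}(A)$, and if moreover $A = C + D$ with $D \geq 0$ then $\chi_{\{\lambda\}}(C) \leq N_D$ — both of which follow from the operator inequality $0 \leq A - C \leq \lambda I - C$ restricted to the range of $\chi_{\{\lambda\}}(C)$, where $\lambda I - C$ vanishes. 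Once that lemma is in hand, the mutual-orthogonality trick from the proof of Lemma~\ref{monotone sequence} supplies the "infinite" conclusions, and the rest is bookkeeping with equivalence and unitary equivalence of projections in a $\sigma$-finite factor.
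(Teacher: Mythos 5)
Your overall architecture for ii), iii) and v) — the lemma that for $0\le C\le A$, $\lambda=\|A\|$ and $A=C+D$ with $D\ge 0$ one has $\chi_{\{\lambda\}}(C)\le\chi_{\{\lambda\}}(A)$ and $\chi_{\{\lambda\}}(C)\le N_D$ — is sound and is essentially the paper's argument recast in spectral-projection form. The genuine gap is the semifinite essential-norm inequality $\|A\|_e\ge\|B\|$ in i), which is the one substantive step of the proposition and for which none of your three sketches is an argument: $\|\sum_{j\ge n}C_j\|_e$ need not equal $\|A\|_e$, $\|C_n\|_e$ can be $0$ (take $B$ compact), there is no ``compactness of the tail,'' and ``infinitely many large spectral contributions'' is precisely what has to be proved. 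The missing step is: fixing $0<t<\|B\|$ and writing $A\ge t\sum_j R_j$ with $R_j\sim\chi_{(t,\infty)}(B)\ne 0$, one must show that the SOT-convergent sum $T=\sum_j R_j$ of infinitely many mutually equivalent nonzero projections satisfies $\|T\|_e\ge 1$. The paper deduces this from the trace inequality $\tau((T-I)_+)\ge\tau((R_T-T)_+)$ of \cite[Theorem 3.3]{SSP}; an elementary alternative is to note that if $F:=\chi_{(1-\epsilon,\infty)}(T)$ were finite, then compressing $R_j\le T\le(1-\epsilon)I+\|T\|F$ by $R_j$ gives $R_jFR_j\ge(\epsilon/\|T\|)R_j$, whence $(\epsilon/\|T\|)\sum_j\tau(R_j)\le\tau(FTF)\le\|T\|\tau(F)<\infty$, contradicting $\sum_j\tau(R_j)=\infty$. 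Some such quantitative step is indispensable and is absent from your plan.

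Two further repairs. In ii) you cannot ``pass to mutually orthogonal equivalent copies'' of the $C_j$: the decomposition of $A$ is given, and replacing summands by equivalent ones destroys the identity $A=\sum_j C_j$. Your own lemma already yields the orthogonality ($\chi_{\{\|A\|\}}(C_j)\le N_{C_i}$ for $i\ne j$, while $\chi_{\{\|A\|\}}(C_i)\le R_{C_i}$); to conclude that $\chi_{\{\|A\|\}}(A)$ is infinite you must also use that these orthogonal projections are mutually equivalent and nonzero, since in a II$_\infty$ factor infinitely many orthogonal nonzero projections can sit under a finite projection. In iv) the written argument does not work: in a II$_\infty$ factor $0\notin\sigma_e(B)$ does not give $B\ge\delta R_B$ (the spectrum of $B$ may accumulate at $0$ inside a finite-trace spectral projection), only $B\ge\delta$ off a finite projection; the assertion that projections equivalent to a fixed nonzero projection cannot tend to $0$ strongly is false in general (rank-one projections in $B(\mathcal{H})$); and the step ``projections with complements of bounded finite trace cannot tend SOT to $0$'' is true but needs a justification via normality of $\tau$. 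One can repair your SOT argument using the cofinite projections $U_j\chi_{[\delta,\infty)}(B)U_j^*$, but the cleaner route — the paper's — is to pass to the Calkin algebra: $\pi(C_j)$ is unitarily equivalent to $\pi(B)\ge t\pi(I)$, so $\pi(A)\ge Nt\pi(I)$ for every $N$, a contradiction; the type III (and the $0\in\sigma(B)$) statement is immediate since $0\notin\sigma(B)$ gives $C_j\ge\delta I$, which is incompatible with SOT convergence of the series.
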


\begin{proof}
\item [ i)] Since  $A\ge C_j\sim B$ for all $j$, we have both $\|A\|\geq \|C_j\|=\|B\|$ and $R_A\ge R_{C_j}\sim R_B$. Assume now that  $\mathcal{M}$ is semifinite.  For every $0<t<\|B\|$ we have $B\geq tR$ for some non-zero projection $R$. Hence $C_j\geq tR_j$ for some projection $R_j\sim R$ and $A\geq t\sum_{j=1}^{\infty}R_j$. Thus $\|A\|_e\geq t\left\|\sum_{j=1}^{\infty}R_j\right\|_e$ and it is enough to show that $\left\|\sum_{j=1}^{\infty}R_j\right\|_e\geq 1$. Let $T:=\sum_{j=1}^{\infty}R_j$.
Since $\tau(T)=\infty$ we see that $R_T$ is infinite.  

From \cite[Theorem 3.3]{SSP} we know that $\tau((T-I)_+)\geq \tau((R_T-T)_+)$. If $\tau((T-I)_+)=\infty$ then $\|T\|_e\geq 1$. If $\tau((T-I)_+)<\infty$ then both $(T-I)_+$ and $(R_T-T)_+$ belong to the ideal of compact operators relative to $\mathcal{M}$. Since  $$T=(T-I)_+-(R_T-T)_++R_T,$$ it follows that $\|T\|_e=\|R_T\|_e=1$.
\item [ ii)]
Let $P=\chi_{\{\|B\|\}}(B)$ and $B'= \chi_{[0, \|B\|)}(B)$ so that $B=B'\oplus \|B\|P$. Let $V_j$ be  partial isometries with $V_j^*V_j=R_B$ such that $C_j=V_jBV_j^*$ and hence $$C_j= V_jB'V_j^*\oplus \|B\|V_jPV_j^*.$$ Therefore $A=A'+A''$ where $A'=\sum_{j=1}^{\infty}V_jB'V_j^*$ and $A''=\|B\|\sum_{j=1}^{\infty}V_jPV_j^*$. 

Assume that   $P\ne 0$, then  $\|A''\|\ge \|B\|$. If furthermore $\|A\|=\|B\|$, we have $\|B\|= \|A\|\ge \|A''\|\ge \|B\|$ whence $\|\sum_{j=1}^{\infty}V_jPV_j^*\|=1$. This implies that the projections $V_jPV_j^*$ must be mutually orthogonal and hence $Q:=\sum_{j=1}^{\infty}V_jPV_j^*$ is a projection, necessarily infinite. Then $A=A'+\|B\|Q$ with $A'\geq 0$ and  $\|A\|=\|B\|$. Hence  it follows that $A'\perp Q$. Thus $Q\leq \chi_{\{\|B\|\}}(A)=\chi_{\{\|A\|\}}(A)$, whence  $\chi_{\{\|A\|\}}(A)$ is infinite.

If in addition we assume that $B=\|B\|\chi_{\{\|B\|\}}(B)$, i.e., $B'=0$, then $A'=0$, and  $A=A''=\|B\|Q= \|A\|\chi_{\{\|A\|\}}(A)$.

Now consider the case when $C_j\cong B$ for all $j$.
\item [ iii)] Obvious since then $N_B\sim N_{C_1}\ge N_A$.
\item [ iv)]
Assume first that $\mathcal{M}$ is semifinite and let $\pi:
\mathcal M\to \mathcal  M/\mathcal J$ be the quotient map. Then $\pi(A)\geq \sum_{j=1}^{N}\pi(C_j)$ for every $N\geq 1$. Assume by contradiction that $0\notin \sigma_e(B)$. Then $\pi(B)$ is invertible in $\mathcal{M}/\mathcal{J}$, hence $\pi(B)\geq t\pi(I)$ for some positive $t$. Since $\pi(C_j)\cong \pi(B)$ in $\mathcal{M}/\mathcal{J}$,  we have $\pi(A)\geq Nt\pi(I)$ for every $N$,   a contradiction.
Thus $0\in \sigma_e(B)$ and hence $0\in \sigma(B)$. When $\mathcal{M}$ is type III,  the same argument, without the need to pass to the quotient  algebra $\mathcal{M}/\mathcal{J}$, shows that $0\in \sigma(B)$.
\item [ v)]  If $\chi_{\{\|B\|\}}(B)\ne 0$ and $\xi \in \chi_{\|B\|}(C_i)\mathcal{H}$ is a unit vector, then $ \langle C_i\xi,\xi\rangle= \|B\|$, hence
$$\|B\|= \|A\|\ge  \langle A\xi,\xi\rangle=\|B\|+\sum_{j\ne i}^{\infty}\langle C_j\xi,\xi\rangle$$ 
which implies that $\xi\in N_{C_j}\mathcal{H}$ for every $j\ne i$, i.e.,  $\chi_{\{\|C_i\|\}}(B)\le N_{C_j}$.  But since $C_j\cong B$ implies that $N_{C_j}\sim N_B$, it follows that $\chi_{\{\|B\|\}}(B)\prec N_B$ .

\end{proof}



Next we present sufficient conditions for the decomposition of $A$ into sums of positive operators equivalent to a fixed positive operator $B$. These are of course based on the decompositions obtained in section 3, but with the two additional cases iii) and iv).

\begin{theorem}\label{equiv}
Let $\mathcal{M}$ be a $\sigma$-finite, semifinite infinite factor, $A$, $B\in \mathcal{M}^+$, and $B\ne0$. Each of the following conditions is sufficient for $A=\sum_{j=1}^{\infty}C_j$ for some $C_j\sim B$ with $N_{C_j}$ infinite for all $j$:
\item [ i)] $\tau((A-\|B\|I)_+)=\infty$,
\item [ ii)] $\|A\|_e=\|B\|$ and $\chi_{\{\|B\|\}}(B)=0$,
\item [ iii)] $\|A\|_e=\|B\|$,
$\chi_{\{\|B\|\}}(B)\ne 0$, $B\ne \|B\|\chi_{\{\|B\|\}}(B)$ and $\chi_{\{\|A\|_e\}}(A)$ is infinite,
\item [ iv)] $\|A\|_e=\|B\|$, $B=\|B\|\chi_{\{\|B\|\}}(B)$, $A=\|A\|_e\chi_{\{\|A\|_e\}}(A)$ and $\chi_{\{\|A\|_e\}}(A)$ is infinite.
\end{theorem}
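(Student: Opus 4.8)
The plan is to deduce everything from Theorem~\ref{equiv infinite trace} applied to the constant sequence $B_j:=B$ (so that $\alpha=\|B\|$), with an extra preliminary reduction in the two ``atomic'' cases iii) and iv) where the infinite spectral projection of $A$ at the value $\|A\|_e=\|B\|$ has to be split off and redistributed. Cases i) and ii) are essentially immediate: in i), the hypothesis $\tau((A-\|B\|I)_+)=\infty$ is exactly condition i) of Theorem~\ref{equiv infinite trace}, and since $B\neq 0$ there is a $\beta>0$ with $P:=\chi_{(\beta,\infty)}(B)\neq 0$; in ii), since $\chi_{\{\|B\|\}}(B)=0$ the point $\|B\|=\max\sigma(B)$ is non-isolated in $\sigma(B)$, so there are $0<\beta<\gamma<\|B\|$ with $P:=\chi_{(\beta,\gamma]}(B)\neq 0$, and condition ii) of Theorem~\ref{equiv infinite trace} applies. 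Case iv) is also quick: putting $a:=\|B\|$ and $P_0:=\chi_{\{a\}}(B)\neq 0$, the hypotheses read $B=aP_0$ and $A=aG$ with $G:=\chi_{\{a\}}(A)$ infinite; since $\mathcal M$ is a $\sigma$-finite semifinite infinite factor, the infinite projection $G$ is a sum $G=\sum_{j=1}^{\infty}D_j$ of infinitely many mutually orthogonal nonzero projections with $D_j\sim P_0$, and then $C_j:=aD_j\sim B$, $\sum_j C_j=A$, while $N_{C_j}=I-D_j\geq\sum_{k\neq j}D_k$ is infinite.

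The substantive case is iii). Set $a:=\|B\|$, $P_0:=\chi_{\{a\}}(B)\neq 0$, $B':=B(I-P_0)$, so that $B=B'\oplus aP_0$, $B'\neq 0$ (as $B\neq aP_0$), and $\chi_{\{a\}}(B')=0$. Let $G:=\chi_{\{a\}}(A)$, which is infinite. Decompose $G=\sum_{j=1}^{\infty}G_j$ into infinitely many mutually orthogonal infinite projections and split each as $G_j=Q_j\oplus G_j'$ with $Q_j\sim P_0$ and $G_j'$ still infinite (possible because $G_j\sim I$). With $G':=\sum_j G_j'$, an infinite subprojection of $G$, put $A'':=A(I-G)+aG'$; this is supported on $I-\sum_j Q_j$, and since $aG'\leq A''\leq A$ with $G'$ infinite we get $\|A''\|_e=a$. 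Now apply Theorem~\ref{equiv infinite trace} to $A''$ and the constant sequence $B'$: if $\|B'\|<a$ then $\|A''\|_e>\|B'\|$, whence $\tau((A''-\|B'\|I)_+)=\infty$ and part i) applies; if $\|B'\|=a$ then $\chi_{\{\|B'\|\}}(B')=\chi_{\{a\}}(B')=0$ and, $a$ being non-isolated in $\sigma(B')$, part ii) applies. Either way $A''=\sum_j C_j'$ with $C_j'\sim B'$ and $N_{C_j'}$ infinite. Because $R_{C_j'}\leq R_{A''}\leq I-Q_j$ is orthogonal to $Q_j$, and $R_{B'}\leq I-P_0$ is orthogonal to $P_0$, a gluing of partial isometries gives $C_j:=C_j'+aQ_j\sim B'\oplus aP_0=B$; finally $\sum_j C_j=A''+a\sum_j Q_j=A(I-G)+aG=A$, and $N_{C_j}\geq\sum_{k\neq j}Q_k$ is infinite.

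The only nonroutine point is the bookkeeping in iii): the atom $G$ of $A$ at $a=\|B\|$ must be partitioned so that the copies $C_j'$ of $B'=B-\|B\|\chi_{\{\|B\|\}}(B)$ furnished by Theorem~\ref{equiv infinite trace} sit in a subspace orthogonal to all the copies $aQ_j$ of $\|B\|\chi_{\{\|B\|\}}(B)$, which is what allows the two families to be merged into operators equivalent to $B$ with infinite kernel; keeping the ``spare'' infinite piece $G'$ of $G$ inside $A''$ is precisely what makes $\|A''\|_e=a$, the required orthogonality, and the infiniteness of $N_{C_j}$ all hold at once. The dichotomy $\|B'\|<a$ versus $\|B'\|=a$ is needed because $B'$ may still carry $a$ in its spectrum, now as a non-isolated point.
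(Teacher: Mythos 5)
Your proposal is correct and follows essentially the same route as the paper: cases i) and ii) are read off directly from Theorem \ref{equiv infinite trace} with the constant sequence $B_j=B$, and in iii) and iv) you split the infinite spectral atom of $A$ at $\|A\|_e=\|B\|$ into countably many copies of $\chi_{\{\|B\|\}}(B)$ while retaining an infinite piece so that the remainder still has essential norm $\|B\|$, then decompose that remainder into copies of $B'=B\chi_{[0,\|B\|)}(B)$ via the dichotomy $\|B'\|<\|B\|$ versus $\|B'\|=\|B\|$ and glue. This matches the paper's proof (which keeps half of the atom inside $A'$ and treats iv) as the degenerate case $A'=B'=0$), differing only in bookkeeping.
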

\begin{proof}
 i) and ii) are direct consequences of Theorem \ref{equiv infinite trace}.

\item [ iii)]\& iv)  Let $P:=\chi_{\{\|B\|\}}(B)\ne 0$. By decomposing $\chi_{\{\|A\|_e\}}(A)$ into  the sum of two infinite projections, we can assume that $A=A'\oplus \|B\|Q$ where $\|A'\|_e=\|B\|$ and $Q$ is an infinite projection. Decompose further   $Q=\sum_{j=1}^{\infty}Q_j$ into the sum of projections $Q_j\sim P$. Now decompose $B$ as $B=B'\oplus \|B\|P$ where $B'=B\chi_{(0, \|B\|)}(B)$. 
\\
Assume that iii) holds, i.e.,  $B'\ne 0$. We can decompose $A'=\sum_{j=1}^{\infty}C'_j$ with $C'_j\sim B'$ by invoking i) in the case  that $\|B'\|<\|B\|$,  and hence $\|A'\|_e>\|B'\|$, whence $\tau((A'-\|B'\|)_+)=\infty$ or invoking ii) in the case that $\|B'\|=\|B\|$ because then $\chi_{\{\|B'\|\}}(B')=0$. 
Thus
$$A= \Big(\sum_{j=1}^{\infty}C'_j\Big)\oplus  \Big(\sum_{j=1}^{\infty}\|B\|Q_j\Big)=\sum_{j=1}^{\infty}\big(C'_j\oplus \|B\|Q_j\big)$$
and $C_j:=C'_j\oplus \|B\|Q_j\sim B'\oplus \|B\|P= B$.
Notice that by construction, $N_{C_j}\ge \sum_{k\ne j}Q_k$ is infinite for every $j$.

Finally, notice that  $B'=A'=0$ in case iv) and hence the proof is a special case of iii).
\end{proof}

Thus combining these sufficient conditions with the   necessary conditions of Proposition   \ref {nec equiv} we obtain:
\begin{corollary} \label{norm=essnorm}
Let $\mathcal M$ be semifinite, $A, B\in \mathcal M^+$, $B\ne 0$ and $\|A\|_e=\|A\|$, then the condition ii) in Proposition \ref {nec equiv} is necessary and sufficient for $A=\sum_{j=1}^{\infty}C_j$ for some $C_j\sim B$. Furthermore if that condition is satisfied, the decomposition can be chosen so that $N_{C_j}$ is infinite for all $j$.
\end{corollary}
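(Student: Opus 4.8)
The plan is to combine the two directions that have already been established in the excerpt. For necessity, observe that Proposition \ref{nec equiv} ii) lists four mutually exclusive conditions, one of which must hold whenever $A=\sum_{j=1}^\infty C_j$ with $C_j\sim B$; so if such a decomposition exists, condition ii) of Proposition \ref{nec equiv} is automatically satisfied. This direction requires no extra hypothesis beyond $B\ne 0$ and holds in any $\sigma$-finite infinite factor, in particular in the semifinite case at hand.

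For sufficiency, I would assume condition ii) of Proposition \ref{nec equiv} holds and break into its four alternatives, using the extra hypothesis $\|A\|_e=\|A\|$ to upgrade each norm statement about $A$ to the corresponding essential-norm statement needed by Theorem \ref{equiv}. In alternative 1), $\|A\|>\|B\|$ becomes $\|A\|_e>\|B\|$, which gives $\tau((A-\|B\|I)_+)=\infty$, so Theorem \ref{equiv} i) applies. In alternative 2), $\|A\|=\|B\|$ with $\chi_{\{\|B\|\}}(B)=0$ becomes $\|A\|_e=\|B\|$, so Theorem \ref{equiv} ii) applies. In alternatives 3) and 4), $\|A\|=\|B\|$ gives $\|A\|_e=\|B\|$, and the conditions $\chi_{\{\|A\|\}}(A)$ infinite together with $\|A\|=\|A\|_e$ yield that $\chi_{\{\|A\|_e\}}(A)$ is infinite (and in alternative 4), $A=\|A\|\chi_{\{\|A\|\}}(A)$ becomes $A=\|A\|_e\chi_{\{\|A\|_e\}}(A)$); hence Theorem \ref{equiv} iii) or iv) applies respectively. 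In all four cases Theorem \ref{equiv} yields $A=\sum_{j=1}^\infty C_j$ with $C_j\sim B$ and $N_{C_j}$ infinite, giving the final "furthermore" clause for free.

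The only genuinely delicate point is checking that the essential-norm hypotheses of Theorem \ref{equiv} are exactly matched by what $\|A\|_e=\|A\|$ buys us in each of the four cases; in particular that in alternatives 3) and 4) the spectral projection $\chi_{\{\|A\|\}}(A)=\chi_{\{\|A\|_e\}}(A)$ coincides (immediate since $\|A\|=\|A\|_e$) and that this projection being infinite is precisely what Theorem \ref{equiv} iii), iv) require. There is no real obstacle here --- the statement is a bookkeeping consequence of the two prior results --- so the proof is short. I would write it as: "By Proposition \ref{nec equiv} ii) the condition is necessary. Conversely, since $\|A\|=\|A\|_e$, each of the four cases of that condition implies the correspondingly numbered hypothesis of Theorem \ref{equiv} (with $\|A\|$ replaced by $\|A\|_e$ throughout), so the decomposition exists with $N_{C_j}$ infinite."
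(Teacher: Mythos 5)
Your proposal is correct and matches the paper's intent exactly: the paper states this corollary without a separate proof, as the immediate combination of the necessity in Proposition \ref{nec equiv} ii) with the sufficiency in Theorem \ref{equiv}, using $\|A\|_e=\|A\|$ to translate each of the four alternatives (and, in case 1, the standard implication $\|A\|_e>\|B\|\Rightarrow\tau((A-\|B\|I)_+)=\infty$ already noted in the paper). Your case-by-case bookkeeping is precisely this argument, including the ``furthermore'' clause coming for free from Theorem \ref{equiv}.
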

It is easy to see that condition ii) in Proposition \ref {nec equiv} is also sufficient in the type III case:
\begin{corollary}\label{N&S type III}
If $\mathcal M$ is type III, $A, B\in \mathcal M^+$ with  $B\ne 0$, then the condition ii) in Proposition \ref {nec equiv} is necessary and sufficient for $A=\sum_{j=1}^{\infty}C_j$ for some $C_j\sim B$. Furthermore if that condition is satisfied, the decomposition can be chosen so that $N_{C_j}\ne 0$ for all $j$.
\end{corollary}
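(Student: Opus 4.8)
The plan is to reduce Corollary \ref{N&S type III} to the results already proved, handling separately the four mutually exclusive cases of condition ii) in Proposition \ref{nec equiv}. Necessity is immediate: if $A=\sum_{j=1}^\infty C_j$ with $C_j\sim B$, then Proposition \ref{nec equiv} i) and ii) apply verbatim (recall that in the type III case $\mathcal J=\{0\}$, so the essential-norm statements are vacuous and one works directly with $\|\cdot\|$ and $\sigma(\cdot)$). So the content is sufficiency, and I would invoke the type III half of Theorem \ref{equiv infinite trace} and the type III version of the construction in Theorem \ref{equiv} iii)\&iv).

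First I would treat case 1), $\|A\|>\|B\|$. Here I apply Theorem \ref{equiv infinite trace} i) with $B_j=B$ for all $j$ and $\alpha=\|B\|$: the single operator $B\ne 0$ trivially provides a $\beta>0$ and a nonzero $P$ with $P\prec\chi_{(\beta,\infty)}(B)$ (take any $0<\beta<\|B\|$ and $P\le\chi_{(\beta,\infty)}(B)$), and $\|A\|>\alpha$ is exactly the type III hypothesis, so we get $A=\sum_{j=1}^\infty C_j$ with $C_j\sim B$ and $N_{C_j}$ infinite, in particular nonzero. Case 2), $\|A\|=\|B\|$ and $\chi_{\{\|B\|\}}(B)=0$: this is case ii) of the type III version of Theorem \ref{equiv infinite trace}, applied again with $B_j=B$; the hypothesis $\chi_{\{\alpha\}}(B)=0$ holds, and one finds $0<\beta<\gamma<\|B\|$ with $P\prec\chi_{(\beta,\gamma]}(B)$ nonzero because $\chi_{(0,\|B\|)}(B)=R_B\ne 0$ and $\|B\|\notin$ the point spectrum forces mass in some $(\beta,\gamma]$. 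So $A=\sum C_j$ with $C_j\sim B$.

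For cases 3) and 4) I would reproduce, in the type III setting, the argument of Theorem \ref{equiv} iii)\&iv). Write $P:=\chi_{\{\|B\|\}}(B)\ne 0$ and $B=B'\oplus\|B\|P$ with $B'=B\chi_{(0,\|B\|)}(B)$. Since $\chi_{\{\|A\|\}}(A)$ is infinite, split it into two infinite projections and write $A=A'\oplus\|B\|Q$ with $\|A'\|=\|B\|$ and $Q$ infinite; decompose $Q=\sum_{j=1}^\infty Q_j$ with $Q_j\sim P$ (possible since $Q$ is infinite and $\mathcal M$ is a $\sigma$-finite factor). In case 3), $B'\ne 0$, so either $\|B'\|<\|B\|=\|A'\|$, in which case case 1) above gives $A'=\sum_j C'_j$ with $C'_j\sim B'$, or $\|B'\|=\|B\|=\|A'\|$ with $\chi_{\{\|B'\|\}}(B')=0$, in which case case 2) gives the same. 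Then $C_j:=C'_j\oplus\|B\|Q_j\sim B'\oplus\|B\|P=B$ and $A=\sum_j C_j$. Case 4) is the special case $B'=A'=0$: then $A=\|A\|Q=\sum_j\|B\|Q_j$ with $\|B\|Q_j\sim\|B\|P=B$. In every case $N_{C_j}$ contains $\sum_{k\ne j}Q_k\ne 0$ (or is infinite), so $N_{C_j}\ne 0$.

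The only mild obstacle is bookkeeping: one must check that the type III analogues of Theorem \ref{equiv infinite trace} and of the splitting used in Theorem \ref{equiv} genuinely go through with $\|\cdot\|$ and $\sigma(\cdot)$ in place of $\|\cdot\|_e$ and $\sigma_e(\cdot)$ — but this is precisely what Lemma \ref{gen spect} and the ``type III'' clauses throughout section 3 were set up to provide, and the decomposition $Q=\sum Q_j$ into subprojections equivalent to $P$ uses only that $Q$ is infinite in a $\sigma$-finite factor. No genuinely new idea is needed; the corollary is a direct assembly of the four cases from the already-established machinery.
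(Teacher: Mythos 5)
Your proposal is correct and follows essentially the route the paper intends (the paper leaves this corollary's proof implicit as the type III analogue of Theorem \ref{equiv}): cases 1) and 2) come straight from the type III clauses of Theorem \ref{equiv infinite trace}, and cases 3) and 4) repeat the splitting $A=A'\oplus\|B\|Q$, $Q=\sum_j Q_j$ with $Q_j\sim\chi_{\{\|B\|\}}(B)$, exactly as in Theorem \ref{equiv} iii)\,\&\,iv), with $\|\cdot\|$ and Lemma \ref{gen spect} replacing $\|\cdot\|_e$ and Lemma \ref{gen ess spect}. No gaps.
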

 
Decompositions into sums of operators  unitarily equivalent to a given positive operator are a special case of Theorem \ref {unit equiv sequence}, but here too we can add the two additional conditions iii) and iv). These cases are easily obtained from the fact that equivalent positive operators with infinite null spaces are unitarily equivalent.

\begin{corollary}\label{UE}
Let $\mathcal{M}$ be  semifinite, $A$, $B\in \mathcal{M}^+$, $B\neq 0$, $0\in \sigma_e(B)$, and that $N_A\prec N_B$. Then any of the following mutually exclusive conditions implies that $A=\sum_{j=1}^{\infty}C_j$ for some $C_j\cong B$:
\item [ i)] $\|A\|_e>\|B\|$,
\item [ ii)] $\|A\|_e=\|B\|$ and $\chi_{\{\|B\|\}}(B)=0$, 
\item [ iii)] $\|A\|_e=\|B\|$,  $\chi_{\{\|B\|\}}(B)\ne 0$, $B\ne \|B\|\chi_{\{\|B\|\}}(B)$, $\chi_{\{\|A\|_e\}}(A)$ is infinite and $N_B$ is infinite,
\item [ iv)] $\|A\|_e=\|B\|$, $B=\|B\|\chi_{\{\|B\|\}}(B)$, $A=\|A\|\chi_{\{\|A\|\}}(A)$, $\chi_{\{\|A\|\}}(A)$ is infinite and $N_B$ is infinite.
\end{corollary}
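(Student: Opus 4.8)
The plan is to deduce i) and ii) directly from Theorem~\ref{unit equiv sequence} applied to the constant sequence $B_j:=B$, and to deduce iii) and iv) from Theorem~\ref{equiv} by upgrading the equivalence $C_j\sim B$ it produces to the unitary equivalence $C_j\cong B$.

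For i) and ii) one takes $B_j=B$ for all $j$, so that $\alpha=\sup_j\|B_j\|=\|B\|$, and the standing hypotheses $N_A\prec N_B$ and $0\in\sigma_e(B)$ of the corollary are precisely the hypotheses $N_A\prec N_{B_j}$, $0\in\sigma_e(B_j)$ required by Theorem~\ref{unit equiv sequence}. Since $B\ne0$ one may pick $0<\beta<\|B\|$ with $\chi_{(\beta,\infty)}(B)\ne0$ and let $P$ be a non-zero subprojection; this supplies the spectral data needed in case i) of Theorem~\ref{unit equiv sequence} (the condition then holds for every $j$), and together with $\|A\|_e>\|B\|=\alpha$ this gives conclusion i). For ii), the hypotheses $\chi_{\{\|B\|\}}(B)=0$ and $B\ne0$ force $R_B=\chi_{(0,\|B\|)}(B)\ne0$, so covering $(0,\|B\|)$ by countably many half-open intervals one finds $0<\beta<\gamma<\|B\|$ with $\chi_{(\beta,\gamma]}(B)\ne0$; choosing $P\le\chi_{(\beta,\gamma]}(B)$ non-zero and using $\|A\|_e=\|B\|=\alpha$ places us in case ii) of Theorem~\ref{unit equiv sequence}, which yields conclusion ii).

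For iii) and iv) the hypotheses are exactly those of Theorem~\ref{equiv}~iii) resp.\ iv) together with the additional assumption that $N_B$ is infinite; in iv) note that $A=\|A\|\chi_{\{\|A\|\}}(A)$ with $\chi_{\{\|A\|\}}(A)$ infinite forces $\|A\|_e=\|A\|$, so the two formulations of the hypothesis coincide. Thus Theorem~\ref{equiv} furnishes $A=\sum_{j=1}^{\infty}C_j$ with $C_j\sim B$ and $N_{C_j}$ infinite for all $j$, and what remains is to realize each $C_j$ as a unitary conjugate of $B$. Since $C_j\sim B$ we may write $C_j=V_jBV_j^*$ for a partial isometry $V_j$ with $V_j^*V_j=R_B$ and $V_jV_j^*=R_{C_j}$; both $N_{C_j}$ and $N_B$ are infinite and hence both equivalent to $I$ in the $\sigma$-finite factor $\mathcal{M}$, so there is a partial isometry $W_j$ with $W_j^*W_j=N_B$ and $W_jW_j^*=N_{C_j}$. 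Then $U_j:=V_j+W_j$ is a unitary and, since $B$ annihilates $N_B$, one checks directly that $U_jBU_j^*=V_jBV_j^*=C_j$, that is, $C_j\cong B$.

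I expect the only delicate point to be this last passage from $\sim$ to $\cong$, and it is exactly here that the hypothesis ``$N_B$ infinite'' in iii), iv) is used: it is what forces $N_{C_j}\sim N_B$, hence $R_{C_j}\cong R_B$, so that the intertwining partial isometry can be completed to a unitary. Everything else --- producing $\beta,\gamma,P$ in i) and ii), and verifying that $U_j$ is a unitary with $U_jBU_j^*=C_j$ --- is a routine check.
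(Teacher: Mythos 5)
Your proposal is correct and follows exactly the route the paper indicates: cases i) and ii) are obtained by specializing Theorem~\ref{unit equiv sequence} to the constant sequence $B_j=B$ (with the elementary choice of $\beta$, $\gamma$, $P$ you describe), and cases iii) and iv) come from Theorem~\ref{equiv} together with the fact that equivalent positive operators with infinite null projections are unitarily equivalent, which is precisely the upgrade $U_j=V_j+W_j$ you carry out. Your extra observation that $A=\|A\|\chi_{\{\|A\|\}}(A)$ with $\chi_{\{\|A\|\}}(A)$ infinite forces $\|A\|_e=\|A\|$, reconciling the hypotheses of iv) with those of Theorem~\ref{equiv}~iv), is a correct and welcome detail.
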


\begin{corollary}\label{P:NecSuf}
Let $\mathcal{M}$ be  type III, $A$, $B\in \mathcal{M}^+$, $B\ne 0$.
Then $A=\sum_{j=1}^{\infty}C_j$ with $C_j\cong B$ if and only if 
one of the following mutually exclusive conditions holds:
\item [i)] $\|A\|>\|B\|$, $0\in \sigma(B)$ and $N_A\prec N_B$,
\item [ii)] $\|A\|=\|B\|$, $\chi_{\{\|B\|\}}(B)=0$, $0\in \sigma(B)$ and $N_A\prec N_B$,
\item [iii)]  $\|A\|=\|B\|$,
$\chi_{\{\|B\|\}}(B)\ne 0$, $B\ne \|B\|\chi_{\{\|B\|\}}(B)$, $\chi_{\{\|A\|\}}(A)\ne 0$ and $N_B\ne 0$,
\item [iv)]  $\|A\|=\|B\|$, $B=\|B\|\chi_{\{\|B\|\}}(B)$, $A=\|A\|\chi_{\{\|A\|\}}(A)$, $\chi_{\{\|A\|\}}(A)\ne 0$ and $N_B\ne 0$.
\end{corollary}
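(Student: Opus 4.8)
The plan is to get the ``only if'' direction straight out of Proposition~\ref{nec equiv}, and the ``if'' direction by treating the four cases separately: cases i) and ii) via Theorem~\ref{unit equiv sequence}, and cases iii) and iv) via Corollary~\ref{N&S type III} followed by an upgrade from $\sim$ to $\cong$. Throughout I would use the standard facts about a $\sigma$-finite type III factor $\mathcal{M}$: the ideal $\mathcal{J}$ is $\{0\}$, so $\sigma_e=\sigma$ and $\|\cdot\|_e=\|\cdot\|$; every non-zero projection is infinite and Murray--von Neumann equivalent to $I$, hence any two non-zero projections are equivalent and $P\prec Q$ holds automatically whenever $P,Q\ne 0$; in particular ``$\chi_{\{\|A\|\}}(A)$ is infinite'' and ``$N_B$ is infinite'' mean the same as being non-zero.

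\emph{Necessity.} Assuming $A=\sum_j C_j$ with $C_j\cong B$, Proposition~\ref{nec equiv} gives conditions i)--v) there, and in particular exactly one of the alternatives 1)--4) of its condition ii). I would then match the alternatives to the cases: alternative 1) together with iii) and iv) of the proposition ($N_A\prec N_B$, $0\in\sigma(B)$) is exactly case i); alternative 2) together with iii), iv) is case ii); alternative 3), where v) gives $\chi_{\{\|B\|\}}(B)\prec N_B$ and hence (since $\chi_{\{\|B\|\}}(B)\ne0$) $N_B\ne0$, is case iii); alternative 4) likewise, using $B\ne 0$ to see $\chi_{\{\|B\|\}}(B)\ne0$ and hence $N_B\ne0$ from v), is case iv). Mutual exclusivity of i)--iv) is inherited from that of 1)--4). (Condition i) of the proposition, $\|A\|\ge\|B\|$ and $R_B\prec R_A$, is automatically compatible, the latter being free in type III.)

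\emph{Sufficiency.} With $B_j:=B$ (so $\alpha=\|B\|$): in case i) I would take $\beta=\|B\|/2$ and $P=\chi_{(\|B\|/2,\infty)}(B)\ne0$ (non-zero since $\|B\|\in\sigma(B)$) and apply Theorem~\ref{unit equiv sequence} i), whose remaining hypotheses $\|A\|>\alpha$, $N_A\prec N_B$, $0\in\sigma(B)$ are given; in case ii), using $\chi_{\{\|B\|\}}(B)=\chi_{(\|B\|,\infty)}(B)=0$ to see that $\chi_{(\gamma,\|B\|)}(B)\ne0$ for every $\gamma<\|B\|$, I would subdivide $(\|B\|/2,\|B\|)$ to find $0<\beta<\gamma<\|B\|$ with $\chi_{(\beta,\gamma]}(B)\ne0$ and apply Theorem~\ref{unit equiv sequence} ii). In cases iii) and iv) the hypotheses are precisely the alternatives 3) resp. 4) of condition ii) of Proposition~\ref{nec equiv} (``infinite'' meaning ``non-zero'' in type III), so Corollary~\ref{N&S type III} produces $A=\sum_j C_j$ with $C_j\sim B$ and $N_{C_j}\ne 0$; since $N_B\ne0$ as well, both $N_{C_j}$ and $N_B$ are infinite and hence equivalent, so writing $C_j=V_jBV_j^*$ with $V_j^*V_j=R_B$, $V_jV_j^*=R_{C_j}$, one extends $V_j$ by a partial isometry from $N_B$ onto $N_{C_j}$ to a unitary $U_j$ with $U_jBU_j^*=C_j$, giving $C_j\cong B$.

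\emph{Main obstacle.} I do not expect a genuine obstacle: the statement is an assembly of Proposition~\ref{nec equiv}, Theorem~\ref{unit equiv sequence} and Corollary~\ref{N&S type III}. The points that need care are the faithful translation of the numbered conditions of Proposition~\ref{nec equiv} into the type III language (replacing ``infinite'' by ``non-zero'' and essential quantities by ordinary ones, and checking that no hypothesis is dropped), the verification of the spectral conditions of Theorem~\ref{unit equiv sequence} in cases i) and ii), and the $\sim\to\cong$ upgrade in cases iii) and iv) — legitimate precisely because $N_B\ne0$ together with $N_{C_j}\ne0$ from Corollary~\ref{N&S type III} makes both null projections infinite and hence equivalent, so the implementing partial isometry extends to a unitary.
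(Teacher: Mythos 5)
Your proposal is correct and follows essentially the same route the paper intends: necessity from Proposition \ref{nec equiv} (with ``infinite'' read as ``non-zero'' in type III), sufficiency in cases i) and ii) as a special case of Theorem \ref{unit equiv sequence}, and in cases iii) and iv) from the $\sim$-decomposition of Corollary \ref{N\&S type III} upgraded to $\cong$ via the observation that equivalent positive operators with non-zero (hence infinite, hence equivalent) null projections are unitarily equivalent. The verification details you supply (choice of $\beta,\gamma,P$, and the extension of $V_j$ to a unitary) are exactly the ones the paper leaves implicit.
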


\end{document}